\theoremstyle{plain} 
\newtheorem{theorem}{Theorem}[section]
\newtheorem{corollary}[theorem]{Corollary}
\newtheorem{lemma}[theorem]{Lemma}
\newtheorem{proposition}[theorem]{Proposition}
\theoremstyle{definition}
\newtheorem{example}[theorem]{Example}
\theoremstyle{remark}
\newtheorem{remark}[theorem]{Remark}
\numberwithin{equation}{section}
\def\N{{\mathbb N}}
\def\cA{{\mathcal A}}
\def\cB{{\mathcal B}}
\def\cH{{\mathcal H}}
\def\cL{{\mathcal L}}
\def\cS{{\mathcal S}}
\def\cT{{\mathcal T}}
\def\e{{\boldsymbol e}}
\def\0{{\boldsymbol 0}}
\def\1{{\boldsymbol 1}}
\def\aalpha{{\boldsymbol{\alpha}}}
\def\bbeta{{\boldsymbol{\beta}}}
\def\ggamma{{\boldsymbol{\gamma}}}
\def\pdeg{{\rm pdeg}}
\def\Hom{{\rm Hom}}
\def\Im{{\rm Im}}
\def\Ker{{\rm Ker}}
\def\Coker{{\rm Coker}}
\def\rank{{\rm rank}}
\title{The Freeness and minimal free resolutions
of Modules of Differential Operators of a Generic Hyperplane
Arrangement
}
\author{Norihiro Nakashima}
\address{Department of Mathematics,
Graduate School of Science,
Hokkaido University,
Sapporo, 060-0810, Japan
}
\email{naka{\_}n@math.sci.hokudai.ac.jp}
\author{Go Okuyama}
\address{Hokkaido Institute of Technology,
Sapporo, 006-8585, Japan
}
\email{gokuyama@hit.ac.jp}
\author{Mutsumi Saito}
\address{Department of Mathematics,
Graduate School of Science,
Hokkaido University,
Sapporo, 060-0810, Japan
}
\email{saito@math.sci.hokudai.ac.jp}
\begin{document}

\begin{abstract}
Let ${\mathcal A}$ be a generic hyperplane arrangement
composed of $r$ hyperplanes in an $n$-dimensional vector space,
and $S$ the polynomial ring in $n$ variables.
We consider the $S$-submodule $D^{(m)}({\mathcal A})$ of the $n$th Weyl algebra
of  homogeneous differential operators
of order $m$ preserving the defining ideal of ${\mathcal A}$.

We prove that
if $n\geq 3, r>n, m>r-n+1$, then $D^{(m)}({\mathcal A})$ is free
(Holm's conjecture).
Combining this with some results by Holm,
we see that
$D^{(m)}({\mathcal A})$ is free unless
$n\geq 3, r>n, m<r-n+1$.
In the remaining case,
we construct a minimal free resolution of
$D^{(m)}({\mathcal A})$
by generalizing Yuzvinsky's construction for $m=1$.
In addition, we construct a minimal free resolution
of the transpose of the $m$-jet module, which generalizes
a result by Rose and Terao for $m=1$.

\smallskip
\noindent
{\bf Mathematics Subject Classification} (2010): {Primary 16S32; Secondary 13D02.}

\noindent
{\bf Keywords:} {ring of differential operators,
generic hyperplane arrangement, minimal free resolution, Jacobian ideal, jet module.}
\end{abstract}


\maketitle


\section{Introduction}
\label{INTRO}

In the study of a hyperplane arrangement,
its derivation module plays a central character;
in particular, its freeness over the polynomial ring
attracts a great interest
 (see, e.g., Orlik-Terao \cite{Orlik-Terao}).
Generalizing the study of the derivation module
for a hyperplane arrangement
to that of the modules of differential operators of higher order
was initiated by 
Holm \cite{Holm-thesis}, \cite{Holm04}.
In particular, he studied  the case of
generic hyperplane arrangements in detail.

Let $K$ denote a field of characteristic zero, and
$\cA$ a generic hyperplane arrangement in $K^n$ composed of
$r$ hyperplanes.
Let $S$ be the polynomial ring $K[x_1,\ldots, x_n]$,
and $D^{(m)}(\cA)$ the $S$-module of homogeneous differential operators of
order $m$ of the hyperplane arrangement $\cA$.

Among others, in \cite{Holm04},
Holm gave a finite generating set of the $S$-module $D^{(m)}(\cA)$.
As to the freeness of $D^{(m)}(\cA)$,
Holm \cite{Holm-thesis} (cf. \cite{Snellman})
proved the following:

\begin{itemize}
\item
If $n=2$, then $D^{(m)}(\cA)$ is free for any $m$.
\item
If $n\geq 3, r>n, m<r-n+1$, then $D^{(m)}(\cA)$ is not free.
\item
If $n\geq 3, r>n, m=r-n+1$, then $D^{(m)}(\cA)$ is free.
\end{itemize}
Holm also conjectured that
if $n\geq 3, r>n, m>r-n+1$, then $D^{(m)}(\cA)$ is free.

Snellman \cite{Snellman}
computed the Hilbert series of $D^{(m)}(\cA)$,
which supported Holm's conjecture
when $n\geq 3, r>n, m>r-n+1$,
and he
conjectured the
Poicar\'e-Betti series of $D^{(m)}(\cA)$
when
$n\geq 3, r>n, m<r-n+1$.

In the derivation module case,
when $n\geq 3, r>n, m<r-n+1$ with $m=1$,
Rose-Terao \cite{Rose-Terao} and Yuzvinsky \cite{Yuzvinsky}
independently gave a minimal free resolution of  $D^{(1)}(\cA)$.
In the course of the proof,
Rose-Terao \cite{Rose-Terao} 
gave
 minimal free resolutions of
all modules of logarithmic differential forms with poles along $\cA$.
They also gave a minimal free resolution of
$S/J$, where $J$ is the Jacobian ideal of a polynomial defining $\cA$.
Yuzvinsky's construction \cite{Yuzvinsky}
is more straightforward and combinatorial than \cite{Rose-Terao}.

In this paper, we prove Holm's conjecture,
namely,
we prove that
if $n\geq 3, r>n, m>r-n+1$, then $D^{(m)}(\cA)$ is free.
Hence, for a generic hyperplane arrangement $\cA$,
$D^{(m)}(\cA)$ is free unless
$n\geq 3, r>n, m<r-n+1$.
In the remaining case $n\geq 3, r>n, m<r-n+1$,
we construct a minimal free resolution of
$D^{(m)}(\cA)$
by generalizing \cite{Yuzvinsky} and a minimal free resolution
of the transpose of the $m$-jet module generalizing
that of $S/J$ given by \cite{Rose-Terao}.

After we fix notation on differential operators for a hyperplane arrangement
in \S 2, we recall the Saito-Holm criterion in \S 3.
It was proved by Holm, and it is a criterion for a subset of $D^{(m)}(\cA)$
to form a basis,
which generalizes the Saito criterion in the case of $m=1$.

From \S 4 on, we assume that $r\geq n$ and the hyperplane arrangement $\cA$
is generic.
In \S 4, we recall the finite generating set of $D^{(m)}(\cA)$
given by Holm \cite{Holm04}.
Then we recall the case $n=2$ in \S 5 and the case $m=r-n+1$
in \S 6 for completeness.
In \S 7, we consider the case $m\geq r-n+1$ and prove Holm's conjecture
(Theorem \ref{Holm'sConjecture}).

From \S 8 on, we consider the case $m<r-n+1$.
In \S 8, we give a minimal generating set of $D^{(m)}(\cA)$
(Theorem \ref{theorem10.3}).
In \S 9, we generalize \cite{Yuzvinsky} to construct a minimal free
resolution of $D^{(m)}(\cA)$ (Theorem \ref{thm:MinFreeRes}).
In \S \ref{Jacobian}, we generalize the minimal free resolution of $S/J$
given in \cite{Rose-Terao} (Theorem \ref{Rose-Terao:Theorem4.5.3}).
In \S 11, we prove that
the $S$-module considered in \S \ref{Jacobian}
is the transpose of the $m$-jet module
$\Omega^{[1,m]}(S/SQ)$ (Theorem \ref{thm:TransposeOfJet}),
where $Q$ is a polynomial defining $\cA$.

\section{The Modules of Differential Operators for a Hyperplane Arrangement}

Throughout this paper,
let $K$ denote a field of characteristic zero,
$\cA$ a central hyperplane arrangement in $K^n$ composed of
$r$ hyperplanes, 
and $S$ the polynomial ring $K[x_1,\ldots, x_n]$.
We assume that $n\geq 2$.

For a hyperplane $H\in \cA$,
we fix a linear form $p_H\in S$ defining $H$.
Set
\begin{equation}
Q:=Q_\cA:=\prod_{H\in\cA}p_H.
\end{equation}

Let $D(S)=S\langle \partial_1,\ldots, \partial_n\rangle$ 
denote the $n$th Weyl algebra,
where $\partial_j= \frac{\partial}{\partial x_j}$.
For a nonzero differential operator
$P=\sum_{\aalpha\in \N^n}f_\aalpha(x)\partial^\aalpha\in D(S)$,
the maximum of $|\aalpha|$ with $f_\aalpha\neq 0$
is called the {\it order} of $P$, where
$$
\partial^\aalpha=\partial_1^{\alpha_1}\cdots \partial_n^{\alpha_n},
\qquad
|\aalpha|=\alpha_1+\cdots +\alpha_n
$$
for $\aalpha=(\alpha_1, \ldots, \alpha_n)$.
If $P$ has no nonzero $f_\aalpha$ with $|\aalpha|\neq m$,
it is said to be {\it homogeneous of order $m$}.
We denote
by $D^{(m)}(S)$ the $S$-submodule of $D(S)$
of differential operators 
homogeneous of order $m$.

We denote by $\ast$ the action of $D(S)$ on $S$.
For an ideal $I$ of $S$, 
\begin{equation}
\label{def:D(I)}
D(I):=
\{ \theta\in D(S)\, |\, \theta\ast I \subseteq I\}
\end{equation}
is called the {\it idealizer\/} of $I$.

We set
\begin{equation}
\label{def:D(A)}
D(\cA):=D(\langle Q\rangle).
\end{equation}
Holm \cite[Theorem 2.4]{Holm04} proved
\begin{equation}
\label{eqn:2.4}
D(\cA)=\bigcap_{H\in\cA}D(\langle p_H\rangle).
\end{equation}
We denote
by $D^{(m)}(\cA)$ the $S$-submodule of $D(\cA)$
of differential operators
homogeneous of order $m$.
Then Holm \cite[Proposition 4.3]{Holm04} proved
$$
D(\cA)=\bigoplus_{m=0}^\infty D^{(m)}(\cA).
$$
A differential operator homogeneous of order $1$
is nothing but a derivation.
Hence $D^{(1)}(\cA)$ is the module of logarithmic derivations along $\cA$.

The polynomial ring $S=\bigoplus_{p=0}^\infty S_p$
is a graded algebra, where $S_p$ is the $K$-vector subspace
spanned by the monomials of degree $p$.
The $n$th Weyl algebra $D(S)$ is a graded $S$-module with
$\deg(x^\aalpha\partial^\bbeta)=|\aalpha|-|\bbeta|$.
Each $D^{(m)}(\cA)$ is a graded $S$-submodule of $D(S)$.
An element 
$P=\sum_{\aalpha\in \N^n}f_\aalpha(x)\partial^\aalpha\in D^{(m)}(\cA)$
is said to be {\it homogeneous of polynomial degree $p$},
and denoted by $\pdeg P=p$,
if $f_\aalpha\in S_p$ for all $\aalpha$ with nonzero $f_\aalpha$.

\section{Saito-Holm criterion}

To prove that $D^{(1)}(\cA)$ is a free $S$-module,
the Saito criterion (\cite[Theorem 1.8 (ii)]{Kyoji}, 
see also \cite[Theorem 4.19]{Orlik-Terao})
is very useful.
Holm \cite{Holm-thesis} generalized the Saito criterion
to the one for $D^{(m)}(\cA)$.
In this section, we briefly review Holm's generalization.

Set
$$
s_m:=\binom{n+m-1}{m},
\qquad
t_m:=\binom{n+m-2}{m-1}.
$$
Let
$$
\{ x^{\aalpha^{(1)}}, x^{\aalpha^{(2)}},\ldots, x^{\aalpha^{(s_m)}}\} 
$$
be the set of monomials of degree $m$.
For operators $\theta_1, \ldots, \theta_{s_m}$,
define an $s_m\times s_m$ coefficient matrix 
$M_m(\theta_1, \ldots, \theta_{s_m})$ by
$$
M_m(\theta_1, \ldots, \theta_{s_m}):=
\begin{bmatrix}
\theta_1\ast \frac{x^{\aalpha^{(1)}}}{\aalpha^{(1)} !} 
& \cdots &
\theta_{s_m}\ast \frac{x^{\aalpha^{(1)}}}{\aalpha^{(1)} !}\\
\vdots & \ddots & \vdots \\
\theta_1\ast \frac{x^{\aalpha^{(s_m)}}}{\aalpha^{(s_m)} !} 
& \cdots &
\theta_{s_m}\ast \frac{x^{\aalpha^{(s_m)}}}{\aalpha^{(s_m)} !}
\end{bmatrix},
$$
where $\aalpha !=(\alpha_1!)(\alpha_2!)\cdots (\alpha_n!)$
for $\aalpha=(\alpha_1,\alpha_2, \ldots, \alpha_n)$.

The proofs of the following
two propositions go similarly to those of \cite[Proposition 4.12]{Orlik-Terao}
and \cite[Proposition 4.18]{Orlik-Terao}.

\begin{proposition}[I$\!$I$\!$I Proposition 5.2 in \cite{Holm-thesis} 
(cf. Proposition 4.12 in \cite{Orlik-Terao})]
\label{Proposition4.12}
If $\theta_1, \ldots, \theta_{s_m}\in D^{(m)}(\cA)$,
then
$$
\det M_m(\theta_1, \ldots, \theta_{s_m})
\in \langle Q^{t_m}\rangle.
$$
\end{proposition}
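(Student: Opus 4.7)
The plan is to establish the divisibility one hyperplane at a time. For each $H\in\cA$ I will show $p_H^{t_m}$ divides $\det M_m(\theta_1,\ldots,\theta_{s_m})$. Since the linear forms $\{p_H\}_{H\in\cA}$ are pairwise non-associate in the UFD $S$, the principal ideals $\langle p_H^{t_m}\rangle$ are pairwise coprime, and simultaneous divisibility by every $p_H^{t_m}$ forces divisibility by their product $Q^{t_m}=\prod_{H\in\cA}p_H^{t_m}$, which is what we want.

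Fix $H\in\cA$. After a $K$-linear change of coordinates on $S$ I may assume $p_H=x_n$. The effect on the matrix $M_m$ is that its rows, indexed by the basis $\{x^\aalpha/\aalpha!\}_{|\aalpha|=m}$ of the degree $m$ part of $S$, get replaced by the corresponding combinations coming from the new monomial basis. This amounts to left-multiplying $M_m$ by an invertible matrix over $K$, so the principal ideal $\langle\det M_m\rangle$ of $S$ is unchanged; hence it suffices to prove $x_n^{t_m}\mid\det M_m$ under the assumption $p_H=x_n$.

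Under this assumption, the identity \eqref{eqn:2.4} gives $\theta_j\in D(\langle x_n\rangle)$ for every $j$, and in particular $\theta_j\ast f\in\langle x_n\rangle$ whenever $f\in\langle x_n\rangle$. Among the $s_m$ monomials $x^\aalpha$ with $|\aalpha|=m$, exactly those with $\alpha_n\geq 1$ lie in $\langle x_n\rangle$, and the substitution $\alpha_n\mapsto\alpha_n-1$ sets up a bijection between them and the degree $m-1$ monomials in $n$ variables, so there are $\binom{n+m-2}{m-1}=t_m$ of them. Consequently, $t_m$ entire rows of $M_m(\theta_1,\ldots,\theta_{s_m})$ have all entries in $\langle x_n\rangle$. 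Multilinearity of the determinant in these $t_m$ rows then gives $\det M_m\in\langle x_n^{t_m}\rangle$, completing the local step.

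The main obstacle is not conceptual but bookkeeping: one must make sure that the reduction to $p_H=x_n$ by a $K$-linear change of variables preserves the divisibility claim up to a unit in $K$. Once this is checked, the argument is essentially the classical $m=1$ Saito criterion proof with the correct combinatorial count: of the $s_m=\binom{n+m-1}{m}$ monomials of degree $m$, exactly $t_m$ are divisible by the chosen linear form $p_H$, and each such row contributes one factor of $p_H$ to $\det M_m$.
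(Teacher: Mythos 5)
Your argument is correct and is exactly the proof the paper alludes to (it cites Holm's thesis and notes the argument runs parallel to the classical Saito-criterion divisibility proof of Proposition 4.12 in Orlik--Terao): for each $H$ one normalizes $p_H=x_n$, observes via \eqref{eqn:2.4} that the $t_m=\binom{n+m-2}{m-1}$ rows indexed by monomials divisible by $x_n$ lie entirely in $\langle x_n\rangle$, extracts $p_H^{t_m}$ from the determinant by multilinearity, and concludes by pairwise coprimality of the $p_H$ in the UFD $S$. No gaps; the coordinate-change step is handled correctly as left multiplication of $M_m$ by an invertible matrix over $K$.
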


\begin{proposition}[I$\!$I$\!$I Proposition 5.7 in \cite{Holm-thesis} 
(cf. Proposition 4.18 in \cite{Orlik-Terao})]
\label{Proposition4.18}
Suppose that $D^{(m)}(\cA)$ is a free $S$-module.
Then the rank of $D^{(m)}(\cA)$ is $s_m$.
\end{proposition}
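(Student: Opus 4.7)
The plan is to compute the $S$-rank of $D^{(m)}(\cA)$ directly by sandwiching it between two copies of a free module of rank $s_m$, and then to invoke the invariance of rank for free modules over the commutative ring $S$.

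First I would note that $D^{(m)}(S)=\bigoplus_{|\aalpha|=m}S\,\partial^{\aalpha}$ is a free $S$-module of rank $s_m$, since there are exactly $s_m=\binom{n+m-1}{m}$ multi-indices $\aalpha\in\N^n$ with $|\aalpha|=m$. Because $D^{(m)}(\cA)\subseteq D^{(m)}(S)$, this yields the upper bound $\rank_S D^{(m)}(\cA)\le s_m$ at once.

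For the reverse bound, the key claim is that $Q^m\,D^{(m)}(S)\subseteq D^{(m)}(\cA)$. Indeed, for any $\theta\in D^{(m)}(S)$ and any $f\in\langle Q\rangle$,
\[
(Q^m\theta)\ast f \;=\; Q^m\cdot(\theta\ast f)\;\in\; Q^m S\;\subseteq\;\langle Q\rangle,
\]
since $\theta\ast f\in S$; hence $Q^m\theta\in D(\langle Q\rangle)\cap D^{(m)}(S)=D^{(m)}(\cA)$ by (\ref{def:D(A)}). Because $D^{(m)}(S)$ is torsion-free, multiplication by $Q^m$ is injective, so $D^{(m)}(\cA)$ contains a copy of the free module $D^{(m)}(S)$ of rank $s_m$. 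Combining the two inequalities gives $\rank_S D^{(m)}(\cA)=s_m$. If $D^{(m)}(\cA)$ happens to be free, the invariant basis number property of the commutative ring $S$ forces any basis to have cardinality equal to the rank, namely $s_m$.

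I do not anticipate a genuine obstacle here; the entire content is the observation that the sandwich $Q^m D^{(m)}(S)\subseteq D^{(m)}(\cA)\subseteq D^{(m)}(S)$ collapses after inverting $Q$, so that $D^{(m)}(\cA)$ and $D^{(m)}(S)$ have the same generic rank. The only point requiring verification is that the extra factor $Q^m$ absorbs any failure of a given $\theta\in D^{(m)}(S)$ to idealize $\langle Q\rangle$, and this is immediate from the one-line computation above without any Leibniz expansion of $\theta\ast(Qg)$.
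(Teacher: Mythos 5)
Your proof is correct, and it is the same sandwich argument $Q\,D^{(m)}(S)\subseteq D^{(m)}(\cA)\subseteq D^{(m)}(S)$ that the paper invokes by reference to Holm and to the $m=1$ case in Orlik--Terao (the paper itself omits the proof, noting only that it "goes similarly" to Proposition 4.18 there). The only cosmetic remark is that the factor $Q^m$ is overkill: a single factor of $Q$ already gives $(Q\theta)\ast f=Q(\theta\ast f)\in\langle Q\rangle$.
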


The following is a generalization of the Saito criterion.
This was proved by Holm \cite[I$\!$I$\!$I Theorem 5.8]{Holm-thesis}.

\begin{theorem}[Saito-Holm criterion]
\label{Saito'sCriterion}
Given $\theta_1, \ldots, \theta_{s_m}\in D^{(m)}(\cA)$,
the following two conditions are equivalent:
\begin{enumerate}
\item[{\rm (1)}]
$\det M_m(\theta_1, \ldots, \theta_{s_m})
= c Q_\cA^{t_m}$ for some $c\in K^\times$,
\item[{\rm (2)}]
$\theta_1, \ldots, \theta_{s_m}$ form a basis for $D^{(m)}(\cA)$ over $S$.
\end{enumerate}
\end{theorem}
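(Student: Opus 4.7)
The plan is to parallel the classical Saito-criterion argument \cite[Theorem~4.19]{Orlik-Terao}, using Propositions~\ref{Proposition4.12} and \ref{Proposition4.18} as the linear-algebraic core in place of their $m=1$ counterparts. The extra ingredient that makes the $m$th-order version go through is the observation that any differential operator automatically preserves an ideal that becomes the unit ideal after localization; this yields the identification $D^{(m)}(\cA)[Q^{-1}]=D^{(m)}(S)[Q^{-1}]$, which, combined with the intersection formula (\ref{eqn:2.4}), will let us pin down the factorization of $\det M_m$ hyperplane by hyperplane.

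For (1)$\Rightarrow$(2) the method is Cramer's rule. Since $\det M_m=cQ^{t_m}\neq 0$, the $\theta_i$ are already $S$-linearly independent. For spanning, any element of $D^{(m)}(S)$ is determined by its values on the monomials $x^{\aalpha^{(j)}}/\aalpha^{(j)}!$, so given $\theta\in D^{(m)}(\cA)$ the decomposition $\theta=\sum_i f_i\theta_i$ over $K(\x)$ reduces to a linear system with coefficient matrix $M_m$, and Cramer yields
$$
f_i=\frac{\det M_m(\theta_1,\ldots,\theta_{i-1},\theta,\theta_{i+1},\ldots,\theta_{s_m})}{\det M_m(\theta_1,\ldots,\theta_{s_m})}.
$$
The numerator lies in $\langle Q^{t_m}\rangle$ by Proposition~\ref{Proposition4.12} while the denominator is $cQ^{t_m}$ with $c\in K^\times$, so each $f_i$ lies in $S$.

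For (2)$\Rightarrow$(1) I start with Proposition~\ref{Proposition4.12} to write $\det M_m=gQ^{t_m}$ with $g\in S$, and note $g\neq 0$ because $\theta_1,\ldots,\theta_{s_m}$ form a basis. The task is to show $g\in K^\times$, which will come from two localization arguments. Inverting $Q$ makes $\{\theta_i\}$ a basis of the free module $D^{(m)}(S)[Q^{-1}]$ with canonical basis $\{\partial^{\aalpha^{(j)}}\}$, so $\det M_m$ is a unit in $S[Q^{-1}]$; since $(S[Q^{-1}])^\times=K^\times\cdot\prod_{H\in\cA}p_H^{\Z}$, this forces $\det M_m=c\prod_{H}p_H^{k_H}$ with $c\in K^\times$ and, by Proposition~\ref{Proposition4.12}, $k_H\geq t_m$. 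To obtain the opposite inequality, I localize at the height-one prime $\langle p_H\rangle$: because $p_{H'}$ is a unit in $S_{\langle p_H\rangle}$ for $H'\neq H$, formula (\ref{eqn:2.4}) reduces matters to $D^{(m)}(\langle p_H\rangle)_{\langle p_H\rangle}$, and after choosing coordinates with $p_H=x_n$ a direct Leibniz analysis exhibits the explicit basis $\{\partial^\aalpha:\alpha_n=0\}\cup\{x_n\partial^\aalpha:\alpha_n\geq 1\}$, for which $M_m$ is diagonal with exactly $t_m$ entries equal to $x_n$. Thus the $p_H$-adic valuation of any local basis determinant is exactly $t_m$, so $k_H=t_m$ and $\det M_m=cQ^{t_m}$.

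The main obstacle I expect is the single-hyperplane computation: checking that $\sum_{|\aalpha|=m}f_\aalpha\partial^\aalpha$ belongs to $D^{(m)}(\langle x_n\rangle)$ precisely when $f_\aalpha\in\langle x_n\rangle$ for every $\aalpha$ with $\alpha_n\geq 1$. This needs a careful $m$th-order Leibniz-rule calculation, appreciably more delicate than the $m=1$ case; once it is settled, everything else is routine linear algebra and localization.
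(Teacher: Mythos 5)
Your proposal is correct, and it follows the same classical template that the paper implicitly relies on: the paper itself gives no proof of Theorem~\ref{Saito'sCriterion}, deferring entirely to Holm's thesis, and Holm's argument is precisely the order-$m$ adaptation of the Saito criterion that you carry out (Cramer's rule via Proposition~\ref{Proposition4.12} for one direction, and hyperplane-by-hyperplane localization for the other, with the single-hyperplane computation producing exactly $t_m$ factors of $p_H$). The one step you should make explicit is that for $H'\neq H$ the localization $D^{(m)}(\langle p_{H'}\rangle)_{\langle p_H\rangle}$ equals $D^{(m)}(S)_{\langle p_H\rangle}$, which follows from the easy inclusion $p_{H'}D^{(m)}(S)\subseteq D^{(m)}(\langle p_{H'}\rangle)$; with that noted, your reduction via \eqref{eqn:2.4} and the diagonal local basis $\{\partial^\aalpha:\alpha_n=0\}\cup\{x_n\partial^\aalpha:\alpha_n\geq 1\}$ is sound.
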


The following is an easy consequence of Theorem \ref{Saito'sCriterion}.

\begin{theorem}[I$\!$I$\!$I Theorem 5.9 in \cite{Holm-thesis} 
(cf. Theorem 4.23 in \cite{Orlik-Terao})]
\label{Theorem4.23}
Let $\theta_1, \ldots, \theta_{s_m}\in D^{(m)}(\cA)$ be linearly independent
over $S$.
Then $\theta_1, \ldots, \theta_{s_m}$ form a basis for $D^{(m)}(\cA)$ over $S$ if and only if
$$
\sum_{j=1}^{s_m}\pdeg\, \theta_j= r t_m.
$$
\end{theorem}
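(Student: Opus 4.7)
The plan is to deduce the theorem from the Saito-Holm criterion (\thmref{Saito'sCriterion}) by a polynomial-degree calculation, echoing the $m=1$ argument in \cite{Orlik-Terao}. Each $\theta_j$ is taken to be homogeneous of polynomial degree $\pdeg\theta_j$, as required for $\pdeg\theta_j$ to be defined.

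First I would compute the polynomial degree of $\det M_m(\theta_1,\ldots,\theta_{s_m})$. Writing $\theta_j = \sum_{|\aalpha|=m} f_\aalpha^{(j)} \partial^\aalpha$ with $f_\aalpha^{(j)} \in S_{\pdeg\theta_j}$, the $(i,j)$-entry of $M_m$ equals $\theta_j\ast\frac{x^{\aalpha^{(i)}}}{\aalpha^{(i)}!} = f_{\aalpha^{(i)}}^{(j)}$, which is either zero or homogeneous of polynomial degree $\pdeg\theta_j$. Hence the determinant is either zero or homogeneous of polynomial degree $\sum_{j=1}^{s_m}\pdeg\theta_j$. Moreover, $\det M_m = 0$ precisely when the columns of $M_m$ are linearly dependent over the fraction field of $S$; clearing denominators in such a dependence and using the identification $(f_{\aalpha^{(i)}}^{(j)})_i \leftrightarrow \theta_j$ produces a nontrivial $S$-linear relation among $\theta_1,\ldots,\theta_{s_m}$. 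Therefore $S$-linear independence of the $\theta_j$ forces $\det M_m \neq 0$.

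With these two observations in hand, both directions are immediate. If $\theta_1,\ldots,\theta_{s_m}$ form a basis, then \thmref{Saito'sCriterion} gives $\det M_m = c Q_\cA^{t_m}$ for some $c\in K^\times$, and equating polynomial degrees yields $\sum_j \pdeg\theta_j = r t_m$. Conversely, assuming $S$-linear independence together with $\sum_j \pdeg\theta_j = r t_m$, the determinant is a nonzero homogeneous polynomial of degree $r t_m$; by \propref{Proposition4.12}, $Q_\cA^{t_m}$ divides $\det M_m$, and since $\deg Q_\cA^{t_m} = r t_m$, the quotient must be a nonzero constant $c \in K^\times$, so \thmref{Saito'sCriterion} implies that the $\theta_j$ form a basis. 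There is no serious obstacle; the only point requiring care is the column-wise homogeneity of $M_m$, which is immediate from the definition of $\pdeg$.
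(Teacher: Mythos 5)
Your proof is correct and follows exactly the route the paper intends: the paper states this result as ``an easy consequence'' of the Saito--Holm criterion (Theorem~\ref{Saito'sCriterion}), and your degree count on $\det M_m$ combined with Proposition~\ref{Proposition4.12} is the standard way to make that deduction precise, mirroring the $m=1$ argument in Orlik--Terao. The observation that the $(i,j)$-entry of $M_m$ is just the coefficient $f^{(j)}_{\aalpha^{(i)}}$, so that $S$-linear independence of the $\theta_j$ is equivalent to $\det M_m\neq 0$, is the only point needing care and you handle it correctly.
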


Suppose that $D^{(m)}(\cA)$ is free over $S$.
We denote by $\exp D^{(m)}(\cA)$
the multi-set of polynomial degrees of a basis for $D^{(m)}(\cA)$.
The expression
$$
\exp D^{(m)}(\cA)= \{ 0^{e_0}, 1^{e_1}, 2^{e_2},\ldots\}
$$
means that $\exp D^{(m)}(\cA)$ has $e_i$ $i$'s ($i=0,1,2,\cdots$).

\begin{proposition}[cf. Proposition 4.26 in \cite{Orlik-Terao}]
\label{Proposition4.26}
Assume that $D^{(m)}(\cA)$ is free over $S$, and suppose that
$$
\exp D^{(m)}(\cA)= \{ 0^{e_0}, 1^{e_1}, 2^{e_2},\ldots\}.
$$
Then
$$
\sum_k e_k= s_m,\qquad
\sum_k ke_k = r t_m.
$$
\end{proposition}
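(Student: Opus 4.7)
The plan is to produce a homogeneous $S$-basis of $D^{(m)}(\cA)$ and then read off both identities directly from \propref{Proposition4.18} and \thmref{Theorem4.23}. Since $D^{(m)}(\cA)$ is a graded $S$-submodule of $D(S)$ and is assumed free, the graded version of Nakayama's lemma guarantees that any minimal set of homogeneous generators is in fact an $S$-basis; in particular a homogeneous $S$-basis exists. I would fix one, say $\theta_1,\ldots,\theta_N$, and set $d_j := \pdeg \theta_j$. By the very definition of $\exp D^{(m)}(\cA)$, the multi-set $\{d_1,\ldots,d_N\}$ coincides with $\{0^{e_0}, 1^{e_1}, 2^{e_2},\ldots\}$, so $N = \sum_k e_k$ and $\sum_{j=1}^{N} d_j = \sum_k k e_k$.

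The first identity is then immediate: \propref{Proposition4.18} asserts that the rank of the free module $D^{(m)}(\cA)$ is $s_m$, so $N = s_m$ and thus $\sum_k e_k = s_m$. For the second, apply the forward implication of \thmref{Theorem4.23} to the basis $\theta_1,\ldots,\theta_{s_m}$ (which are in particular linearly independent over $S$): the theorem yields $\sum_{j=1}^{s_m} \pdeg\, \theta_j = r t_m$, that is, $\sum_k k e_k = r t_m$.

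There is no substantive obstacle in this argument. The only point that could plausibly require comment is the existence of a homogeneous $S$-basis for a free graded $S$-module; this is a standard consequence of the positively graded structure of $S$ via graded Nakayama, so it can be invoked without further elaboration. Everything else is just bookkeeping that translates between the ``list of exponents'' and the ``compressed multi-set'' notation $\{0^{e_0}, 1^{e_1}, 2^{e_2},\ldots\}$.
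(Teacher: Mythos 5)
Your proof is correct and follows exactly the paper's own (one-line) argument: the first identity is Proposition~\ref{Proposition4.18} and the second is Theorem~\ref{Theorem4.23} applied to a homogeneous basis. The extra remark on the existence of a homogeneous basis via graded Nakayama is a harmless elaboration of a point the paper leaves implicit.
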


\begin{proof}
Proposition \ref{Proposition4.18} is the first statement,
and Theorem \ref{Theorem4.23} the second.
\end{proof}


\section{Generic arrangements}

{\bf In the rest of this paper, we assume that $r\geq n$ and $\cA$ is generic.}
An arrangement $\cA$ is said to be {\it generic},
if every $n$ hyperplanes of $\cA$ intersect only at the origin.

For a finite set $\cS$,
let $\cS^{(k)}\subseteq 2^{\cS}$ denote the set of 
$\cT\subseteq \cS$ with $\sharp\cT=k$.

Given $\cH\in \cA^{(n-1)}$,
the vector space
$$
\{ \delta\in \sum_{i=1}^n K\partial_i\, |\,
\text{$\delta\ast p_H=0$ for all $H\in\cH$}\}
$$
is one-dimensional;
fix a nonzero element $\delta_\cH$ of this space.
Note that 
\begin{equation}
\delta_\cH\ast p_H=0 \Leftrightarrow H\in \cH,
\end{equation}
since $\cA$ is generic.

For $\cH_1,\ldots,\cH_m\in \cA^{(n-1)}$,
put
\begin{equation}
P_{\{\cH_1,\ldots,\cH_m\}}:= \prod_{H\notin \cap_{i=1}^m\cH_i}p_H.
\end{equation}
Then $P_{\{\cH_1,\ldots,\cH_m\}}\delta_{\cH_1}\cdots\delta_{\cH_m}\in
D^{(m)}(\cA)$ by \eqref{eqn:2.4}.
In particular, for $\cH\in \cA^{(n-1)}$,
$$
P_{\cH}\delta_{\cH}^m\in D^{(m)}(\cA),
$$
where $P_{\cH}:= P_{\{\cH\}}$.
Note that
\begin{equation}
\label{degPH}
\deg P_{\cH}= r-n+1.
\end{equation}

The operator
\begin{equation}
\epsilon_m:=\sum_{|\aalpha|=m}\frac{m!}{\aalpha !}x^\aalpha\partial^\aalpha
\end{equation}
is called {\it the Euler operator of order $m$}.
Then $\epsilon_1$ is the Euler derivation, and
$\epsilon_m=\epsilon_1(\epsilon_1-1)\cdots(\epsilon_1-m+1)$
\cite[Lemma 4.9]{Holm04}.

Holm gave a finite set of generators of $D^{(m)}(\cA)$
as an $S$-module:

\begin{theorem}[Theorem 4.22 in \cite{Holm04}]
\label{Theorem5.28}
$$
D^{(m)}(\cA)=\sum_{\cH_1,\ldots,\cH_m\in \cA^{(n-1)}}
SP_{\{\cH_1,\ldots,\cH_m\}}\delta_{\cH_1}\cdots\delta_{\cH_m}
+S\epsilon_m.
$$
\end{theorem}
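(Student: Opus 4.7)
The inclusion $\supseteq$ is immediate: each $P_{\{\cH_1,\ldots,\cH_m\}}\delta_{\cH_1}\cdots\delta_{\cH_m}$ has just been shown to lie in $D^{(m)}(\cA)$ via \eqref{eqn:2.4}, and $\epsilon_m$ lies there because on any homogeneous $f\in S_p$ it acts as the scalar $p(p-1)\cdots(p-m+1)$ and hence preserves every graded ideal $\langle p_H\rangle$.

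For the forward inclusion my plan is to pass to symbols. The assignment $P=\sum_{|\alpha|=m}f_\alpha\partial^\alpha\mapsto F(x,\xi):=\sum f_\alpha(x)\xi^\alpha$ identifies $D^{(m)}(S)$ with $S[\xi]_m$, and a direct computation of $[P,p_H]$ shows that $P\in D^{(m)}(\cA)$ iff, writing $c_i(H):=\partial p_H/\partial x_i$,
\begin{equation*}
(c(H)\cdot\partial_\xi)\,F\in p_H\cdot S[\xi]\qquad\text{for every $H\in\cA$.}
\end{equation*}
Under this dictionary the candidate generators have symbols $P_{\{\cH_1,\ldots,\cH_m\}}(x)\,\eta_{\cH_1}(\xi)\cdots\eta_{\cH_m}(\xi)$, where $\eta_\cH:=\sum v_i\xi_i$ whenever $\delta_\cH=\sum v_i\partial_i$, together with $(x\cdot\xi)^m$, and the theorem reduces to an $S$-module equality inside $S[\xi]_m$.

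I would then induct on $r\geq n$. In the base case $r=n$, after the linear change of coordinates $p_{H_i}=x_i$ the defining condition reads $x_i\mid f_\alpha$ whenever $\alpha_i\geq 1$, so $D^{(m)}(\cA)$ is freely spanned by the symbols $(\prod_{i:\alpha_i\geq 1}x_i)\,\xi^\alpha$ with $|\alpha|=m$, which coincide with the proposed $P_{\{\cH_\bullet\}}\delta_{\cH_\bullet}$ in this case (and $\epsilon_m$ is redundant). For the inductive step, fix $H_0\in\cA$ and set $\cA':=\cA\setminus\{H_0\}$. Genericity gives $D^{(m)}(\cA)\cap p_{H_0}D^{(m)}(S)=p_{H_0}D^{(m)}(\cA')$, and for $\cH_j\in(\cA')^{(n-1)}$ one has $p_{H_0}P_{\{\cH_\bullet\}}^{\cA'}=P_{\{\cH_\bullet\}}^{\cA}$; so by the inductive hypothesis for $\cA'$, the submodule $p_{H_0}D^{(m)}(\cA')$ already lies in the candidate module for $\cA$. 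It therefore suffices to match an arbitrary $P\in D^{(m)}(\cA)$ modulo $p_{H_0}$: the residue $\bar F\in(S/p_{H_0})[\xi]_m$ is annihilated by $c(H_0)\cdot\partial_\xi$, hence a polynomial in the $\xi$-forms orthogonal to $c(H_0)$. By genericity these are spanned by $\{\eta_\cH:\cH\in\cA^{(n-1)},\,H_0\in\cH\}$, and for any such tuple $H_0\in\bigcap_j\cH_j$, so $P_{\{\cH_\bullet\}}^{\cA}$ is coprime to $p_{H_0}$; subtracting the corresponding $S$-combination of $P_{\{\cH_\bullet\}}^{\cA}\delta_{\cH_\bullet}$ (and, to absorb the diagonal $(x\cdot\xi)^m$ whose polynomial degree $m$ is typically smaller than $\deg P_{\{\cH_\bullet\}}^{\cA}=r-n+1$, a multiple of $\epsilon_m$) reduces $P$ into $p_{H_0}D^{(m)}(\cA')$, where the induction closes.

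The hard part will be ensuring that the subtraction modulo $p_{H_0}$ can be performed coherently with the remaining conditions at $H\in\cA'$: these amount to divisibility constraints on the $(S/p_{H_0})$-coefficients of the $\eta_\cH$-expansion, and are precisely the conditions defining $D^{(m)}(\cA|_{H_0})$ for the restricted arrangement (still generic, with $r-1$ hyperplanes in $K^{n-1}$). Making this precise needs a secondary induction on $n$, and the generic hypothesis is used twice over: to certify that the $\eta_\cH$ with $\cH\ni H_0$ actually span the $(n-1)$-dimensional subspace of $\xi$-forms annihilated by $c(H_0)\cdot\partial_\xi$, and to ensure that $\cA|_{H_0}$ inherits the hypothesis of the $n$-induction.
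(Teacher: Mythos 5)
First, a point of comparison: the paper offers no proof of this statement. Theorem \ref{Theorem5.28} is imported verbatim from Holm \cite{Holm04} (his Theorem 4.22), so there is no in-paper argument to measure yours against; you are in effect reproving a cited result from scratch.

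Your scaffolding is correct as far as it goes: the symbol criterion $(c(H)\cdot\partial_\xi)F\equiv 0\pmod{p_H}$ does characterize the homogeneous order-$m$ part of $D(\langle p_H\rangle)$, the base case $r=n$ is right (and $\epsilon_m$ is indeed redundant there), the identity $D^{(m)}(\cA)\cap p_{H_0}D^{(m)}(S)=p_{H_0}D^{(m)}(\cA\setminus\{H_0\})$ holds, and $\bar F$ does lie in $\mathrm{Sym}^m$ of the span of $\{\eta_\cH : H_0\in\cH\}$ over $S/p_{H_0}$. But the argument stops exactly where the content of the theorem begins. Knowing that $\bar F$ is an $(S/p_{H_0})$-combination of the monomials $\eta_{\cH_1}\cdots\eta_{\cH_m}$ with $H_0\in\bigcap_j\cH_j$ says nothing about whether it is a combination of $\overline{P^{\cA}_{\{\cH_\bullet\}}}\,\eta_{\cH_1}\cdots\eta_{\cH_m}$ and $\overline{(x\cdot\xi)^m}$: the needed divisibility of coefficients by the degree-$(r-n+1)$ polynomials $\overline{P^{\cA}_{\{\cH_\bullet\}}}$ \emph{is} the theorem for the restricted arrangement $\cA^{H_0}$, and you only gesture at the ``secondary induction on $n$'' that would supply it. To close this you must (i) actually verify that reduction mod $p_{H_0}$ carries $D^{(m)}(\cA)$ into $D^{(m)}(\cA^{H_0})$ and that the generators correspond, i.e.\ $\overline{P^{\cA}_{\{\cH_\bullet\}}}$ equals the polynomial $P_{\{\cK_\bullet\}}$ of the restricted arrangement under the bijection $\cK\leftrightarrow\cH\ni H_0$, and $\overline{(x\cdot\xi)^m}$ is the symbol of the Euler operator of $H_0$; and (ii), more seriously, supply a base case that restriction cannot reach: when $n=2$ the restriction $\cA^{H_0}$ consists of $r-1$ copies of the origin in $K^1$, which is not a generic arrangement of distinct hyperplanes, so the $n=2$ case must be proved by a separate direct computation (cf.\ Proposition \ref{Proposition 6.7Holm-thesis3}). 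A smaller point: the monomials $\eta_{\cH_1}\cdots\eta_{\cH_m}$ with $H_0\in\bigcap_j\cH_j$ are far from linearly independent (there are $\binom{r-1}{n-2}$ admissible $\cH$ inside an $(n-1)$-dimensional space of forms), so ``the corresponding $S$-combination'' is not defined coefficientwise and the reduction must be phrased as membership in the submodule they generate. As written, the proposal is a plausible plan for a deletion--restriction proof, not a proof.
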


The following lemma will be used in Sections 7, 8, and 9.

\begin{lemma}
\label{DualBasis}
\begin{enumerate}
\item[{\rm (1)}] 
The set
$\{ \delta_\cH^{r-n+1}\, |\, \cH\in \cA^{(n-1)}\}$
is a $K$-basis of $\sum_{|\aalpha|=r-n+1}K\partial^\aalpha$.
\item[{\rm (2)}] 
The set
$\{ P_\cH\, |\, \cH\in \cA^{(n-1)}\}$
is a $K$-basis of $\sum_{|\aalpha|=r-n+1}Kx^\aalpha=S_{r-n+1}$.
\end{enumerate}
\end{lemma}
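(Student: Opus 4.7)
The plan is to prove both statements simultaneously by exhibiting a pairing matrix between the two proposed bases that is diagonal with nonzero diagonal entries.

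First, observe a dimension count. We have $|\cA^{(n-1)}|=\binom{r}{n-1}$, and both the target spaces
$$\sum_{|\aalpha|=r-n+1}K\partial^\aalpha \quad\text{and}\quad S_{r-n+1}$$
have dimension $\binom{n+(r-n+1)-1}{n-1}=\binom{r}{n-1}$. Hence in each of (1) and (2) it suffices to verify linear independence.

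Second, I will use the natural perfect pairing
$$\Bigl(\sum_{|\aalpha|=m}K\partial^\aalpha\Bigr)\times S_m\longrightarrow K,\qquad (P,f)\longmapsto P\ast f,$$
(note that $P\ast f$ is a constant when $\ord P=\deg f$; the monomial bases $\{\partial^\aalpha\}$ and $\{x^\aalpha/\aalpha!\}$ are dual with respect to this pairing). Setting $m:=r-n+1$, I will compute the matrix $\bigl(\delta_\cH^{m}\ast P_{\cH'}\bigr)_{\cH,\cH'\in\cA^{(n-1)}}$. Since $P_{\cH'}=\prod_{H\notin\cH'}p_H$ is a product of exactly $m$ linear forms (by \eqref{degPH}) and each $p_H$ is killed by $\delta_\cH^2$, the multinomial Leibniz expansion of $\delta_\cH^{m}\ast P_{\cH'}$ collapses to the single term where every factor is differentiated exactly once:
$$\delta_\cH^{m}\ast P_{\cH'}=m!\prod_{H\notin\cH'}(\delta_\cH\ast p_H).$$

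Third, I invoke the genericity of $\cA$ in the form $\delta_\cH\ast p_H=0\Leftrightarrow H\in\cH$. The product above is therefore nonzero iff no $H\notin\cH'$ lies in $\cH$, i.e.\ iff $\cH\subseteq\cH'$; since $\sharp\cH=\sharp\cH'=n-1$, this forces $\cH=\cH'$. Consequently the pairing matrix is diagonal with nonzero diagonal entries, which simultaneously shows that $\{\delta_\cH^{m}\}_{\cH}$ is linearly independent in $\sum_{|\aalpha|=m}K\partial^\aalpha$ and that $\{P_\cH\}_\cH$ is linearly independent in $S_m$. Combined with the dimension count, this yields both (1) and (2). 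There is no serious obstacle: the only subtlety is noticing that the degree of $P_{\cH'}$ matches $m$ exactly, forcing the Leibniz expansion to degenerate to a single monomial in the factors, which is precisely what makes the pairing matrix diagonal.
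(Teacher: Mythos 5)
Your proposal is correct and follows essentially the same route as the paper: a dimension count reducing both claims to linear independence, followed by the computation of the pairing matrix $\bigl(\delta_\cH^{r-n+1}\ast P_{\cH'}\bigr)$, which is diagonal with nonzero entries by genericity (this is exactly the paper's equation \eqref{eqn:dual}). Your extra detail on why the Leibniz expansion collapses and why off-diagonal entries vanish is a faithful elaboration of the same argument.
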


\begin{proof}
The dimensions of $\sum_{|\aalpha|=r-n+1}K\partial^\aalpha$ and 
$S_{r-n+1}$ are equal to
$$
s_{r-n+1}=
\binom{r}{r-n+1}=\binom{r}{n-1}=\sharp \cA^{(n-1)}.
$$

Let $\cH, \cH' \in \cA^{(n-1)}$. Then
\begin{equation}
\label{eqn:dual}
\delta_\cH^{r-n+1}\ast P_{\cH'}
=
\delta_\cH^{r-n+1}\ast \prod_{H\notin\cH'}p_H
=
\left\{
\begin{array}{ll}
(r-n+1)!\prod_{H\notin\cH}(\delta_\cH\ast p_H) & \text{if $\cH'=\cH$}\\
0 & \text{otherwise}.
\end{array}
\right.
\end{equation}
The assertions follow,
since $\delta_\cH\ast p_H=0$ if and only if $H\in\cH$.
\end{proof}

\section{The case $n=2$}

In this section, we consider central arrangements with $r\geq 2$ in $K^2$,
which are always generic.
Note that $s_m=m+1$, and $t_m=m$.

Let $\cA=\{ H_1, H_2,\ldots, H_r\}$. 
Put
$p_i:=p_{H_i}$, $P_i:=P_{\{ H_i\}}$,
and $\delta_i:=\delta_{\{ H_i\}}$
for $i=1,2,\ldots, r$.

We may assume that there exist distinct $a_2,\ldots, a_r\in K$ such that
$$
p_1=x_1,\qquad p_i=x_2-a_ix_1\quad (i=2,\ldots, r).
$$
Then
$$
\delta_1=\partial_2,\qquad \delta_i=\partial_1+a_i\partial_2\quad (i=2,\ldots, r),
$$
and
$$
P_i=Q/p_i \quad (i=1,\ldots, r).
$$

\begin{proposition}[Proposition 6.7 I$\!$I$\!$I in \cite{Holm-thesis},
Proposition 4.14 in \cite{Snellman}]
\label{Proposition 6.7Holm-thesis3}
The $S$-module
$D^{(m)}(\cA)$ is free with the following basis:
\begin{enumerate}
\item[{\rm (1)}]
$\{ \epsilon_m, P_1\delta_1^m, \ldots, P_m\delta_m^m\}$
if $m\leq r-2$.
\item[{\rm (2)}]
$\{ P_1\delta_1^m, \ldots, P_r\delta_r^m\}$
if $m= r-1$.
\item[{\rm (3)}]
$\{ P_1\delta_1^m, \ldots, P_r\delta_r^m, Q\eta_{r+1},\ldots, 
Q\eta_{m+1}\}$
if $m\geq r$,
where $\{ \delta_1^m, \ldots, \delta_r^m, \eta_{r+1}, \ldots, \eta_{m+1}\}$ is a $K$-basis
of $\sum_{i=0}^m K\partial_1^i\partial_2^{m-i}$.
\end{enumerate}
\end{proposition}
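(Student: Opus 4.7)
The strategy is to apply the Saito--Holm criterion (Theorem~\ref{Saito'sCriterion}) directly in each of the three cases. Each proposed set visibly has $s_m = m+1$ elements, each of which lies in $D^{(m)}(\cA)$: the $P_i\delta_i^m$ by the construction preceding Theorem~\ref{Theorem5.28}, the Euler operator $\epsilon_m = \epsilon_1(\epsilon_1-1)\cdots(\epsilon_1-m+1)$ because $\epsilon_1$ preserves any central defining ideal, and $Q\eta_j$ because $Q$ already lies in the defining ideal. What remains is to verify $\det M_m = c\,Q^m$ for some $c \in K^\times$ (recall $t_m = m$ when $n=2$).

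In cases (2) and (3), each column of $M_m$ factors as (an element of $S$) times (a constant vector in $K^{m+1}$): column $i$ ($1 \le i \le r$) is $P_i\cdot v_i$ with $v_i := (\delta_i^m \ast x^\alpha/\alpha!)_\alpha$, and column $r+k$ is $Q\cdot w_k$ with $w_k := (\eta_{r+k} \ast x^\alpha/\alpha!)_\alpha$. The evaluation map $\theta\mapsto (\theta \ast x^\alpha/\alpha!)_\alpha$ is a $K$-linear isomorphism from the space of constant--coefficient order--$m$ operators onto $K^{m+1}$, so the collection $\{v_1,\ldots,v_r\}$ (in case (2), using Lemma~\ref{DualBasis}(1) with $r-1 = m$) or $\{v_1,\ldots,v_r,w_1,\ldots,w_{m+1-r}\}$ (in case (3), by the defining choice of the $\eta_{r+k}$) is a $K$-basis of $K^{m+1}$. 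Its determinant is a unit $c \in K^\times$, so
\[
\det M_m \;=\; c\Bigl(\prod_{i=1}^r P_i\Bigr) Q^{m+1-r} \;=\; c\,Q^{r-1}\cdot Q^{m+1-r} \;=\; c\,Q^m,
\]
using $\prod_{i=1}^r P_i = Q^{r-1}$ (in case (2) the factor $Q^{m+1-r} = Q^0 = 1$).

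Case (1) is the main obstacle because the column for $\epsilon_m$ is genuinely polynomial--valued and does not factor in the preceding way. Proposition~\ref{Proposition4.12} still gives $\det M_m \in \langle Q^m\rangle$, and a polynomial--degree count ($m + m(r-1) = mr = \deg Q^m$) forces $\det M_m = c\,Q^m$ for some $c \in K$; only $c \neq 0$ needs to be proved. I plan to specialize at $(x_1,x_2) = (1,b)$ for any $b \in K \setminus \{a_2,\ldots,a_r\}$ (such $b$ exists since $K$ is infinite), ensuring $Q(1,b) \neq 0$ and every $P_i(1,b) \neq 0$. At this point the $\epsilon_m$--column becomes $\bigl(\binom{m}{\alpha_2} b^{\alpha_2}\bigr)_\alpha$, the $P_1\delta_1^m$--column becomes $P_1(1,b)$ times the standard basis vector indexed by $\alpha=(0,m)$, and the $P_i\delta_i^m$--column ($i\geq 2$) becomes $P_i(1,b)\cdot\bigl(\binom{m}{\alpha_1} a_i^{\alpha_2}\bigr)_\alpha$. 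After factoring $\binom{m}{\alpha_2}$ out of each row and the nonzero scalars $P_i(1,b)$ out of the last $m$ columns, then expanding along the resulting sparse $P_1\delta_1^m$--column, the calculation reduces to the classical Vandermonde determinant on $\{b,a_2,\ldots,a_m\}$. This is nonzero by the choice of $b$ and the distinctness of $a_2,\ldots,a_m$, so $c \neq 0$, and Theorem~\ref{Saito'sCriterion} yields the claimed basis in each case.
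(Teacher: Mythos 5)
Your argument is correct. Note first that the paper itself offers no proof of this proposition: it is imported verbatim from Holm's thesis and Snellman's paper, so there is no in-text argument to compare yours against. That said, your route is the natural one and is surely close to Holm's original: cases (2) and (3) follow immediately from Theorem~\ref{Saito'sCriterion} once you observe that each column of $M_m$ is a scalar vector times $P_i$ (resp.\ times $Q$), that $\prod_{i=1}^r P_i = Q^{r-1}$, and that the constant vectors form a basis of $K^{m+1}$ --- in case (2) by Lemma~\ref{DualBasis}(1) with $m=r-1$, in case (3) by the defining choice of the $\eta_j$. Your handling of case (1) is also sound: homogeneity gives $\det M_m = cQ^m$ with $c\in K$ by Proposition~\ref{Proposition4.12} and the degree count $m+m(r-1)=mr$, and the specialization at $(1,b)$ with $b\notin\{a_2,\dots,a_r\}$ reduces $c\neq 0$ to the Vandermonde determinant on the distinct scalars $b,a_2,\dots,a_m$ (the degenerate case $m=1$ included). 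All membership claims ($\epsilon_m$, $P_i\delta_i^m$, $Q\eta_j$ lying in $D^{(m)}(\cA)$) are justified as you state. I have no corrections.
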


\begin{corollary}
\label{corollary:exp:n=2}
$$
\exp D^{(m)}(\cA)
=\left\{
\begin{array}{ll}
\{ m^1, (r-1)^m\} & (1\leq m\leq r-2),\\
\{ (r-1)^{m+1}\} & (m=r-1),\\
\{ (r-1)^r, r^{m-r+1}\} & (m\geq r).
\end{array}
\right.
$$
\end{corollary}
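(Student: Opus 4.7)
The plan is to simply read off the polynomial degrees from the explicit bases furnished by Proposition \ref{Proposition 6.7Holm-thesis3}. The corollary is a direct corollary in the strict sense: once we know a basis, $\exp D^{(m)}(\cA)$ is by definition the multi-set of polynomial degrees of its elements, so no additional freeness argument is needed beyond that proposition.

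First I would record the three relevant polynomial degrees. The Euler operator $\epsilon_m=\sum_{|\aalpha|=m}\frac{m!}{\aalpha!}x^\aalpha\partial^\aalpha$ has every summand of polynomial degree $m$, so $\pdeg\epsilon_m=m$. Each $\delta_i\in K\partial_1+K\partial_2$ has constant coefficients, so $\pdeg\delta_i^m=0$, and $\pdeg P_i=\deg P_i=\deg Q-\deg p_i=r-1$, hence $\pdeg P_i\delta_i^m=r-1$ for every $i$. Finally any $\eta_j\in\sum_{i=0}^m K\partial_1^i\partial_2^{m-i}$ has polynomial degree $0$, so $\pdeg Q\eta_j=\deg Q=r$.

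Combining these with the three cases of Proposition \ref{Proposition 6.7Holm-thesis3}: for $1\le m\le r-2$ the basis $\{\epsilon_m,P_1\delta_1^m,\ldots,P_m\delta_m^m\}$ contributes one element of polynomial degree $m$ and $m$ elements of polynomial degree $r-1$, giving $\{m^1,(r-1)^m\}$. For $m=r-1$ the basis $\{P_1\delta_1^m,\ldots,P_r\delta_r^m\}$ consists of $r=m+1$ elements of polynomial degree $r-1$, giving $\{(r-1)^{m+1}\}$. For $m\ge r$ the basis $\{P_1\delta_1^m,\ldots,P_r\delta_r^m,Q\eta_{r+1},\ldots,Q\eta_{m+1}\}$ contributes $r$ elements of polynomial degree $r-1$ and $m+1-r$ elements of polynomial degree $r$, giving $\{(r-1)^r,r^{m-r+1}\}$. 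The computation is entirely mechanical and I anticipate no obstacle; as a sanity check one may verify $\sum_k ke_k=rt_m=rm$ in each of the three cases, consistent with Proposition \ref{Proposition4.26}.
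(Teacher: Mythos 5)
Your proposal is correct and is exactly the argument the paper intends: the corollary follows by reading off the polynomial degrees ($\pdeg \epsilon_m = m$, $\pdeg P_i\delta_i^m = r-1$, $\pdeg Q\eta_j = r$) of the explicit bases in Proposition \ref{Proposition 6.7Holm-thesis3}, which is why the paper gives no separate proof. Your sanity check against Proposition \ref{Proposition4.26} (using $t_m = m$ for $n=2$) is a nice touch but not needed.
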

\section{The case $m= r-n+1$}
\label{section:m= r-n+1}

In this section, we consider the case $m= r-n+1$.
In this case,
\begin{equation}
s_m=\binom{n+m-1}{m}=\binom{r}{m}=\binom{r}{n-1}.
\end{equation}
Note also that $\deg P_\cH= r-n+1=m$ \eqref{degPH}.

In Sections 7, 8, and 9, we use Lemma \ref{DualBasis}
in the case $m=r-n+1$.
Lemma \ref{DualBasis} reads as follows in this case:

\begin{lemma}
\label{m=r-n+1:DualBasis}
\begin{enumerate}
\item[{\rm (1)}]
The set
$\{ \delta_\cH^m\, |\, \cH\in \cA^{(n-1)}\}$
is a $K$-basis of $\sum_{|\aalpha|=m}K\partial^\aalpha$.
\item[{\rm (2)}]
The set
$\{ P_\cH\, |\, \cH\in \cA^{(n-1)}\}$
is a $K$-basis of $\sum_{|\aalpha|=m}Kx^\aalpha=S_m$.
\end{enumerate}
\end{lemma}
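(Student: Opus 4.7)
The plan is immediate: this lemma is nothing more than Lemma~\ref{DualBasis} specialized to the standing hypothesis of this section. Lemma~\ref{DualBasis} was already established for any generic $\cA$ with $r\geq n$; it makes no assumption relating $m$ to $r-n+1$, and concerns bases of the spaces $\sum_{|\aalpha|=r-n+1}K\partial^\aalpha$ and $S_{r-n+1}$. Under the hypothesis $m=r-n+1$ of Section~\ref{section:m= r-n+1}, the exponent $r-n+1$ is literally $m$, so $\delta_\cH^{r-n+1}=\delta_\cH^m$ and $S_{r-n+1}=S_m$, and the two assertions of Lemma~\ref{DualBasis} become verbatim the two assertions of the present lemma.

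So no new argument is needed; the proof consists of the single sentence ``apply Lemma~\ref{DualBasis} with $m=r-n+1$.'' The reason the lemma is restated at all is expository: in the subsequent sections it is the parameter $m$, rather than the quantity $r-n+1$, that plays the natural role (for instance in describing bases and exponents of $D^{(m)}(\cA)$), so it is convenient to have the result phrased directly in terms of $m$. Accordingly there is no main obstacle: the entire content of the proof is the observation that the hypothesis $m=r-n+1$ performs the required substitution into the already-proved Lemma~\ref{DualBasis}, including the dimension count $\sharp\cA^{(n-1)}=\binom{r}{n-1}=s_m$ and the diagonal pairing formula \eqref{eqn:dual} that underlies it.
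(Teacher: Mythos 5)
Your proposal is correct and matches the paper exactly: the paper gives no separate proof, introducing the lemma with the phrase ``Lemma~\ref{DualBasis} reads as follows in this case,'' i.e.\ it is precisely the specialization of Lemma~\ref{DualBasis} to $m=r-n+1$. Nothing further is needed.
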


\begin{proposition}[I$\!$I$\!$I Proposition 6.8 in \cite{Holm-thesis}]
\label{m=r-n+1:Basis}
The $S$-module $D^{(m)}(\cA)$ is free
with a basis
$
\{ P_\cH\delta_\cH^m\, |\, \cH\in \cA^{(n-1)}\}.
$
\end{proposition}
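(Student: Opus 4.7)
The plan is to apply the Saito-Holm criterion (Theorem~\ref{Saito'sCriterion}) directly to the proposed set, which has cardinality $\sharp \cA^{(n-1)} = \binom{r}{n-1} = \binom{r}{m} = s_m$ since $m = r-n+1$. The membership $P_\cH \delta_\cH^m \in D^{(m)}(\cA)$ is already noted in the text after \eqref{degPH}, so it remains only to evaluate the coefficient determinant $\det M_m(P_{\cH_1}\delta_{\cH_1}^m, \ldots, P_{\cH_{s_m}}\delta_{\cH_{s_m}}^m)$ for some enumeration $\cA^{(n-1)} = \{\cH_1, \ldots, \cH_{s_m}\}$ and check that it equals $c Q^{t_m}$ for some $c \in K^\times$.

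The key observation is the factorization
\begin{equation*}
M_m(P_{\cH_1}\delta_{\cH_1}^m, \ldots, P_{\cH_{s_m}}\delta_{\cH_{s_m}}^m) \;=\; C \cdot \mathrm{diag}(P_{\cH_1}, \ldots, P_{\cH_{s_m}}),
\end{equation*}
where $C = (c_{ij}) \in K^{s_m \times s_m}$ with $c_{ij} := \delta_{\cH_j}^m \ast \frac{x^{\aalpha^{(i)}}}{\aalpha^{(i)}!}$. Indeed, the $(i,j)$ entry of the left-hand side is $(P_{\cH_j}\delta_{\cH_j}^m) \ast \frac{x^{\aalpha^{(i)}}}{\aalpha^{(i)}!} = P_{\cH_j} \cdot c_{ij}$, since $\delta_{\cH_j}$ has constant coefficients and $c_{ij}$ is the action of the order-$m$ constant-coefficient operator $\delta_{\cH_j}^m$ on a polynomial of degree $m$, hence a scalar.

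Next, I would read off $\det C \neq 0$: by construction $c_{ij}$ is exactly the $\partial^{\aalpha^{(i)}}$-coefficient of $\delta_{\cH_j}^m$ expanded in the standard basis $\{\partial^\aalpha\}_{|\aalpha|=m}$, so $C$ is the transition matrix from $\{\delta_{\cH_j}^m\}$ to this standard basis. Its invertibility is precisely Lemma~\ref{m=r-n+1:DualBasis}(1). For the product of the $P_\cH$, each linear form $p_H$ divides $P_\cH = \prod_{H' \notin \cH} p_{H'}$ exactly when $H \notin \cH$, and the number of $(n-1)$-subsets $\cH$ of $\cA$ with $H \notin \cH$ is $\binom{r-1}{n-1} = \binom{r-1}{m-1} = t_m$. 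Therefore $\prod_\cH P_\cH = Q^{t_m}$, giving $\det M_m = \det(C) \cdot Q^{t_m}$ with $\det(C) \in K^\times$, and Theorem~\ref{Saito'sCriterion} concludes the argument.

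There is no real obstacle: once one sees the factorization of $M_m$ into a scalar matrix times a diagonal of the $P_\cH$, both factors are governed by parts (1) and (2) of Lemma~\ref{m=r-n+1:DualBasis} respectively, so the proof reduces to bookkeeping about sizes and the combinatorial identity $\binom{r-1}{n-1} = t_m$. (Alternatively, one could invoke Theorem~\ref{Theorem4.23}: each $P_\cH \delta_\cH^m$ has polynomial degree $m = r-n+1$, so the total is $s_m \cdot m = \binom{r}{n-1}(r-n+1) = r\binom{r-1}{n-1} = r t_m$, and linear independence follows from Lemma~\ref{m=r-n+1:DualBasis}(1) by extracting the $\partial^\aalpha$-components of a relation $\sum_\cH f_\cH P_\cH \delta_\cH^m = 0$.)
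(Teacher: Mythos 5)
Your proof is correct. Note that the paper itself gives no proof of this proposition: it is recalled ``for completeness'' with a citation to Holm's thesis (I$\!$I$\!$I Proposition 6.8), so there is no in-paper argument to compare against. Your argument is the natural one given the tools the paper sets up, and every step checks out: the count $\sharp\cA^{(n-1)}=\binom{r}{n-1}=s_m$ is right; the factorization $M_m = C\cdot\mathrm{diag}(P_{\cH_1},\ldots,P_{\cH_{s_m}})$ holds because $\partial^{\bbeta}\ast\frac{x^{\aalpha}}{\aalpha!}=\delta_{\aalpha\bbeta}$ for $|\aalpha|=|\bbeta|=m$, so $c_{ij}$ is indeed the $\partial^{\aalpha^{(i)}}$-coefficient of $\delta_{\cH_j}^m$ and $\det C\neq 0$ is exactly Lemma~\ref{m=r-n+1:DualBasis}(1); and $\prod_{\cH}P_{\cH}=Q^{\binom{r-1}{n-1}}=Q^{t_m}$ since $t_m=\binom{r-1}{m-1}=\binom{r-1}{n-1}$ when $r=n+m-1$. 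The parenthetical alternative via Theorem~\ref{Theorem4.23} is also valid and is arguably closer in spirit to how the paper argues the analogous freeness result in Theorem~\ref{Holm'sConjecture} (linear independence from Lemma~\ref{m=r-n+1:DualBasis}(1) plus the degree count $s_m\cdot m=rt_m$); either route is a complete proof.
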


\begin{corollary}
\label{corollary:exp:m=r-n+1}
If $m=r-n+1$, then
$$
\exp D^{(m)}(\cA) = \{ m^{\binom{r}{m}}\}.
$$
\end{corollary}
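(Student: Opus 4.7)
The plan is to read off the exponents directly from the basis provided by Proposition~\ref{m=r-n+1:Basis}. Since $\exp D^{(m)}(\cA)$ is by definition the multiset of polynomial degrees of a basis of $D^{(m)}(\cA)$, it suffices to compute $\pdeg(P_\cH \delta_\cH^m)$ for each $\cH \in \cA^{(n-1)}$ and count them.

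First I would note that each $\delta_\cH$ is a constant-coefficient derivation, hence $\pdeg(\delta_\cH^m) = 0$. Thus $\pdeg(P_\cH \delta_\cH^m) = \deg P_\cH$, which by \eqref{degPH} equals $r-n+1$; under the standing assumption $m = r-n+1$, this is exactly $m$. So every basis element contributes the value $m$ to the multiset.

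Next I would count the basis. Its cardinality is $\sharp \cA^{(n-1)} = \binom{r}{n-1}$, and using $m = r-n+1$ we have
\[
\binom{r}{n-1} = \binom{r}{r-n+1} = \binom{r}{m}.
\]
Combining, $\exp D^{(m)}(\cA) = \{m^{\binom{r}{m}}\}$, as claimed.

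There is essentially no obstacle here: this is a bookkeeping consequence of Proposition~\ref{m=r-n+1:Basis} together with the degree identity~\eqref{degPH}. As a sanity check one could verify compatibility with Proposition~\ref{Proposition4.26}: $\sum_k e_k = \binom{r}{m} = s_m$ and $\sum_k k e_k = m\binom{r}{m} = (r-n+1)\binom{r}{n-1} = r \binom{r-1}{n-2} = r t_m$, since $t_m = \binom{n+m-2}{m-1} = \binom{r-1}{r-n} = \binom{r-1}{n-1-1}$, which confirms both numerical constraints.
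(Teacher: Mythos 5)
Your proof is correct and is exactly the paper's (implicit) argument: the corollary is stated as an immediate consequence of Proposition~\ref{m=r-n+1:Basis}, with the degree count $\pdeg(P_\cH\delta_\cH^m)=\deg P_\cH=r-n+1=m$ from \eqref{degPH} and the cardinality $\sharp\cA^{(n-1)}=\binom{r}{n-1}=\binom{r}{m}$. One inessential slip in your optional sanity check: with $m=r-n+1$ one has $t_m=\binom{r-1}{r-n}=\binom{r-1}{n-1}$ (not $\binom{r-1}{n-2}$), and correspondingly $m\binom{r}{m}=r\binom{r-1}{n-1}=rt_m$, so the check still goes through.
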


\section{The case $m\geq r-n+1$}

In this section, we assume that $m\geq r-n+1$, and we
prove Holm's conjecture by giving a basis of $D^{(m)}(\cA)$.

Set
$$
\tilde{r}:= n+m-1,
$$
and add $\tilde{r}-r$ hyperplanes to $\cA=\{ H_1,\ldots, H_r\}$ so that
\begin{equation}
\widetilde{\cA}:=\cA\cup\{ H_{r+1}, \ldots, H_{\tilde{r}}\}
\end{equation}
is still generic.

For $\cH\in \widetilde{\cA}^{(n-1)}$, define a homogeneous polynomial
$P'_\cH\in S$ by
\begin{equation}
P'_\cH:= \prod_{H\notin\cH;\, H\in \cA} p_H.
\end{equation}

\begin{theorem}
\label{Holm'sConjecture}
The $S$-module $D^{(m)}(\cA)$ is free
with a basis
$
\{ P'_\cH\delta_\cH^m\, |\, \cH\in \widetilde{\cA}^{(n-1)}\}.
$
\end{theorem}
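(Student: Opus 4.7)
The plan is to apply the Saito-Holm criterion (\thmref{Saito'sCriterion}) directly to the proposed family $\{P'_\cH\delta_\cH^m\mid \cH\in\widetilde{\cA}^{(n-1)}\}$. Two preliminary facts come for free: the cardinality matches, since $\sharp\widetilde{\cA}^{(n-1)}=\binom{n+m-1}{n-1}=s_m$; and each $P'_\cH\delta_\cH^m$ actually lies in $D^{(m)}(\cA)$, because by \eqref{eqn:2.4} it suffices to verify membership in every $D(\langle p_H\rangle)$ for $H\in\cA$, and this splits into the two cases $H\in\cH$ (then $\delta_\cH\ast p_H=0$, so $\delta_\cH^m$ preserves $\langle p_H\rangle$ by Leibniz) and $H\notin\cH$ (then $H\in\cA$ forces $p_H\mid P'_\cH$, so the whole operator lands in $\langle p_H\rangle$).

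The core of the argument is to evaluate $\det M_m(P'_{\cH_1}\delta_{\cH_1}^m,\ldots,P'_{\cH_{s_m}}\delta_{\cH_{s_m}}^m)$ and show it is a nonzero $K$-scalar multiple of $Q_\cA^{t_m}$. The key structural observation is that $\delta_\cH^m$ has constant coefficients, so every entry in column $j$ is $P'_{\cH_j}$ times a scalar $\delta_{\cH_j}^m\ast(x^{\aalpha^{(i)}}/\aalpha^{(i)}!)\in K$. Hence
$$
M_m=A\cdot\mathrm{diag}(P'_{\cH_1},\ldots,P'_{\cH_{s_m}})
$$
for some $A\in K^{s_m\times s_m}$, and the determinant factors into two independent pieces: $\det A$ and $\prod_j P'_{\cH_j}$.

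For $\det A\ne 0$, observe that the columns of $A$ are the coordinate vectors of $\delta_{\cH_j}^m$ in the basis $\{\partial^\aalpha\}_{|\aalpha|=m}$ dual to $\{x^\aalpha/\aalpha!\}_{|\aalpha|=m}$. Since $\widetilde{\cA}$ is generic with $\tilde r=n+m-1$, we have $m=\tilde r-n+1$, and \lemref{DualBasis}(1) applied to $\widetilde{\cA}$ gives exactly that $\{\delta_\cH^m\mid\cH\in\widetilde{\cA}^{(n-1)}\}$ is a $K$-basis of $\sum_{|\aalpha|=m}K\partial^\aalpha$; thus $A$ is invertible. For the product, each $p_H$ with $H\in\cA$ appears in $P'_\cH$ precisely when $\cH\in\widetilde{\cA}^{(n-1)}$ avoids $H$, and a Pascal count gives $\binom{\tilde r-1}{n-1}=\binom{n+m-2}{m-1}=t_m$ such subsets, so $\prod_j P'_{\cH_j}=Q_\cA^{t_m}$. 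Combining these, $\det M_m\in K^\times\cdot Q_\cA^{t_m}$, and \thmref{Saito'sCriterion} finishes the proof.

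The real content is the choice of enlargement $\widetilde{\cA}$: one adds exactly $\tilde r-r=(n+m-1)-r$ generic hyperplanes so that the derivation side fits \lemref{DualBasis}(1) at the correct degree $m$, while the factors $P'_\cH$ are built using only the original $\cA$ so that the auxiliary hyperplanes never contaminate the polynomial degrees. The step to watch is the identification of $A$ as an honest change-of-basis matrix (making sure the dual pairing between $\{x^\aalpha/\aalpha!\}$ and $\{\partial^\aalpha\}$ is used correctly); once that is clear, everything reduces to the binomial bookkeeping described above, and one even obtains $\sum_j\pdeg(P'_{\cH_j}\delta_{\cH_j}^m)=rt_m$ as a consistency check via \thmref{Theorem4.23}.
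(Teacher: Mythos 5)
Your proposal is correct. It uses the same essential inputs as the paper---membership of $P'_\cH\delta_\cH^m$ in $D^{(m)}(\cA)$ via \eqref{eqn:2.4}, and \lemref{DualBasis}(1) applied to the enlarged generic arrangement $\widetilde{\cA}$, for which $m=\tilde r-n+1$---but it reaches the conclusion by a different route. The paper never touches the matrix $M_m$: it deduces $S$-linear independence from \lemref{m=r-n+1:DualBasis}(1) and then invokes \thmref{Theorem4.23}, verifying the additive identity $\sum_\cH\deg P'_\cH=rt_m$ by a Vandermonde-type binomial computation. You instead evaluate $\det M_m$ outright through the factorization $M_m=A\cdot\mathrm{diag}(P'_{\cH_1},\ldots,P'_{\cH_{s_m}})$, which is legitimate because $\delta_\cH^m$ has constant coefficients, and your identification of the columns of $A$ with the coordinates of $\delta_{\cH_j}^m$ in the basis $\{\partial^\aalpha\}_{|\aalpha|=m}$ dual to $\{x^\aalpha/\aalpha!\}$ is exactly right. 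What your version buys is the sharper multiplicative statement $\prod_\cH P'_\cH=Q_\cA^{t_m}$ (each $p_H$ with $H\in\cA$ occurs in exactly $\binom{\tilde r-1}{n-1}=t_m$ of the $P'_\cH$), of which the paper's degree-sum identity is only the additive shadow---and which indeed appears in your write-up merely as a consistency check. The two arguments are cousins rather than strangers, since \thmref{Theorem4.23} is itself derived from the Saito--Holm criterion \thmref{Saito'sCriterion}; yours is the more explicit and self-contained of the two, the paper's the shorter once \thmref{Theorem4.23} is available.
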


\begin{proof}
By \eqref{eqn:2.4}, $ P'_\cH\delta_\cH^m\in D^{(m)}(\cA)$ 
for each $\cH\in \widetilde{\cA}^{(n-1)}$.

By Lemma \ref{m=r-n+1:DualBasis} (1),
$\{ P'_\cH\delta_\cH^m\, |\, \cH\in \widetilde{\cA}^{(n-1)}\}$
is linearly independent over $S$.
Since
$$
\deg P'_\cH= \sharp\{ H\in\cA\,|\, H\notin \cH\},
$$
the number of $\cH\in \widetilde{\cA}^{(n-1)}$
with $\deg P'_\cH=j$
is
$$
\binom{r}{j}\binom{\tilde{r}-r}{n-1-(r-j)}
=
\binom{r}{j}\binom{m+n-r-1}{n-r+j-1}.
$$
Then
\begin{eqnarray*}
\sum_j j\binom{r}{j}\binom{m+n-r-1}{n-r+j-1}
&=&
r\sum_j \binom{r-1}{j-1}\binom{m+n-r-1}{n-r+j-1}\\
&=&
r\sum_j \binom{r-1}{j-1}\binom{m+n-r-1}{m-j}\\
&=&
r\binom{m+n-2}{m-1}
=r t_m.
\end{eqnarray*}
Hence we have the assertion by Theorem \ref{Theorem4.23}.
\end{proof}

\begin{corollary}
\label{cor:Exp:m>r-n}
$$
\exp D^{(m)}(\cA)
=
\{ j^{\binom{r}{j}\binom{m+n-r-1}{m-j}}\, |\, 
 r-n+1\leq j\leq \min\{ r, m\}
\}.
$$
\end{corollary}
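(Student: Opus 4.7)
The plan is to read off $\exp D^{(m)}(\cA)$ directly from the explicit basis $\{P'_\cH \delta_\cH^m \mid \cH \in \widetilde{\cA}^{(n-1)}\}$ produced by Theorem~\ref{Holm'sConjecture}. Since $\delta_\cH^m$ has polynomial degree $0$, the polynomial degree of $P'_\cH \delta_\cH^m$ is just $\deg P'_\cH = \sharp\{H \in \cA \mid H \notin \cH\}$, so the task reduces to counting, for each value $j$, the number of $\cH \in \widetilde{\cA}^{(n-1)}$ with $\sharp(\cA \setminus \cH) = j$.

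Next, I would enumerate such $\cH$ by noting that $\cH$ is determined by independently choosing its $r - j$ hyperplanes from $\cA$ and its remaining $n - 1 - (r - j)$ hyperplanes from the $\tilde{r} - r = m + n - r - 1$ added hyperplanes. Consequently the number of $\cH$ with $\deg P'_\cH = j$ equals
$$
\binom{r}{r-j}\binom{m+n-r-1}{n-1-(r-j)} = \binom{r}{j}\binom{m+n-r-1}{m-j},
$$
where the rewriting uses $\binom{r}{r-j} = \binom{r}{j}$ together with the symmetry $\binom{m+n-r-1}{n-r+j-1} = \binom{m+n-r-1}{m-j}$ coming from the fact that the two lower arguments sum to $m + n - r - 1$. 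This is exactly the counting that already appears inside the proof of Theorem~\ref{Holm'sConjecture}.

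Finally, I would identify the range of $j$ for which this count is nonzero. The constraints $0 \leq r - j \leq r$ and $0 \leq n - 1 - (r - j) \leq m + n - r - 1$ together give precisely $r - n + 1 \leq j \leq \min\{r, m\}$. Combining the count with the range yields the stated description of $\exp D^{(m)}(\cA)$. Since the basis and its degree information are handed to us by Theorem~\ref{Holm'sConjecture}, the argument is pure bookkeeping with no genuine obstacle; the only point requiring care is the translation of the set-theoretic counting into the symmetric binomial form displayed in the statement.
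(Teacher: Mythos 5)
Your proposal is correct and matches the paper's (implicit) argument: the corollary is read off from the basis of Theorem~\ref{Holm'sConjecture}, and the counting $\binom{r}{j}\binom{m+n-r-1}{n-r+j-1}=\binom{r}{j}\binom{m+n-r-1}{m-j}$ you perform is exactly the one already carried out in the proof of that theorem. Your added determination of the range $r-n+1\leq j\leq\min\{r,m\}$ from the nonvanishing of the binomial coefficients is the only detail the paper leaves unstated, and you handle it correctly.
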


\section{The case $m< r-n+1$}

Throughout this section, we assume that $m< r-n+1$.

Recall that $D^{(m)}(\cA)$ is generated by 
\begin{equation}
\label{eqn:generatorsHolm}
\{P_{\{\cH_1, \ldots, \cH_m\}}\delta_{\cH_1}\cdots\delta_{\cH_m}\mid 
\cH_1,\ldots, \cH_m\in\cA^{(n-1)}\}\cup\{\epsilon_m\}
\end{equation}
over $S$ (Theorem \ref{Theorem5.28}). 
In this section, 
we choose a minimal system of generators from 
\eqref{eqn:generatorsHolm} (Theorem \ref{theorem10.3}), which implies
that $D^{(m)}(\cA)$ is not free (Remark \ref{m<r-n+1:Nonfreeness}).

Note that
$$
\sharp\cA^{(n-1)}=\binom{r}{n-1}
>
\binom{n+m-1}{n-1}=s_m.
$$

\begin{lemma}
\label{m<r-n+1:Linear Dependence}
For any $\cH_1,\ldots,\cH_m\in \cA^{(n-1)}$, the following hold:
\begin{enumerate}
\item[{\rm (1)}]
${\displaystyle P_{\{\cH_1,\ldots,\cH_m\}}\in 
\bigcap_{\cap_{i=1}^m\cH_i\subset\cH\in\cA^{(n-1)}}SP_\cH}$.
\item[{\rm (2)}]
${\displaystyle \delta_{\cH_1}\cdots\delta_{\cH_m}\in 
\sum_{\cap_{i=1}^m\cH_i\subset\cH\in\cA^{(n-1)}}K\delta_\cH^m}$.
\end{enumerate}
\end{lemma}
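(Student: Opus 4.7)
The plan has two quite different pieces, one for each assertion. For (1) I would argue directly by divisibility: set $\cH_{0} := \bigcap_{i=1}^{m} \cH_{i}$; for any $\cH \in \cA^{(n-1)}$ with $\cH \supset \cH_{0}$ one has $\cA \setminus \cH \subset \cA \setminus \cH_{0}$, so the linear-factor product $P_{\cH} = \prod_{H \notin \cH} p_{H}$ divides $P_{\{\cH_{1},\ldots,\cH_{m}\}} = \prod_{H \notin \cH_{0}} p_{H}$ in $S$.

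Statement (2) is the substantive part. My plan is to reduce it to a single spanning claim inside the symmetric power of a smaller ambient space, and then establish that claim by invoking Lemma~\ref{DualBasis}(1) for a carefully chosen generic subarrangement. Set $k := |\cH_{0}|$, and consider the space of constant-coefficient derivations
\[
W := \Bigl\{\delta \in \sum_{i=1}^{n} K\partial_{i} \,:\, \delta \ast p_{H} = 0 \text{ for all } H \in \cH_{0}\Bigr\},
\]
which has dimension $n-k$ by the genericity of $\cA$. Each $\delta_{\cH_{i}}$ lies in $W$, as does each $\delta_{\cH}$ for $\cH \in \cA^{(n-1)}$ with $\cH \supset \cH_{0}$; hence $\delta_{\cH_{1}}\cdots\delta_{\cH_{m}} \in S^{m}(W)$ and it will suffice to prove the spanning statement
\[
S^{m}(W) \;=\; \sum_{\cH \in \cA^{(n-1)},\,\cH \supset \cH_{0}} K\,\delta_{\cH}^{m}.
\]

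The plan for this spanning claim is to restrict $\cA$ to $W$. For $H \in \cA \setminus \cH_{0}$ the restricted form $\bar p_{H} := p_{H}|_{W}$ is nonzero, and $\{\bar p_{H} : H \in \cA \setminus \cH_{0}\}$ is a generic arrangement of $r-k$ hyperplanes in $W \cong K^{n-k}$, since any $n-k$ of these restricted forms remain linearly independent (inherited from the genericity of $\cA$). Under the identification $\cH \leftrightarrow \cH' := \cH \setminus \cH_{0}$ the derivation $\delta_{\cH}$ coincides (up to scalar) with the derivation associated to $\cH'$ in this restricted arrangement. The hypothesis $m < r-n+1$ rewrites as $m < (r-k)-(n-k)+1$, so one can select a subset $\cA' \subset \cA \setminus \cH_{0}$ of size $(n-k)+m-1$; Lemma~\ref{DualBasis}(1) applied to the generic arrangement $\{\bar p_{H} : H \in \cA'\}$ inside $W$ (whose relevant degree is precisely $|\cA'|-(n-k)+1 = m$) produces a $K$-basis $\{\delta_{\cH_{0}\sqcup\cH'}^{m} : \cH' \in (\cA')^{(n-k-1)}\}$ of $S^{m}(W)$, which is itself a subfamily of the spanning family in question.

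The only delicate step I expect is verifying that both the notion of $\delta_{\cH}$ and the genericity hypothesis descend consistently under restriction to $W$; once this is in place, the numerical hypothesis $m < r-n+1$ is exactly what guarantees the existence of a subarrangement $\cA'$ of the right size for Lemma~\ref{DualBasis}(1) to produce a basis of $S^{m}(W)$, from which (2) is immediate.
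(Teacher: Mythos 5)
Your proof of (1) is essentially the paper's own: $P_\cH=\prod_{H\notin\cH}p_H$ divides $P_{\{\cH_1,\ldots,\cH_m\}}=\prod_{H\notin\cap_i\cH_i}p_H$ because $\cA\setminus\cH\subseteq\cA\setminus\bigcap_i\cH_i$. For (2) you take a correct but genuinely different route. The paper stays in $K^n$: it chooses a subarrangement $\cB\supseteq\bigcap_i\cH_i$ with exactly $n+m-1$ hyperplanes, expands $\delta_{\cH_1}\cdots\delta_{\cH_m}$ in the basis $\{\delta_\cH^m\mid\cH\in\cB^{(n-1)}\}$ of $\sum_{|\aalpha|=m}K\partial^\aalpha$ supplied by Lemma~\ref{DualBasis}, and then shows each unwanted coefficient $c_\cH$ (for $\cH\not\supseteq\bigcap_i\cH_i$) vanishes by letting the operator act on the dual polynomial $\overline{P}_\cH=\prod_{H\in\cB\setminus\cH}p_H$ and invoking the biorthogonality \eqref{eqn:dual}. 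You instead note that every $\delta_{\cH_i}$ lies in $W=\bigcap_{H\in\cap_i\cH_i}H$, so the product lies in $S^m(W)$, and you prove the stronger spanning statement $S^m(W)=\sum_{\cH\supseteq\cap_i\cH_i}K\delta_\cH^m$ by restricting the arrangement to the flat $W$ and applying Lemma~\ref{DualBasis} in dimension $n-k$; this is in substance the fact $\Delta_{\cH_0}=S^m(W)$ underlying the paper's later Lemma~\ref{lemma:DimOfDelta}. Your two ``delicate'' descent checks do go through: $p_H|_W\neq 0$ and the independence of any $n-k$ restricted forms follow from genericity of the corresponding $n$ hyperplanes of $\cA$, and $\delta_\cH$ for $\cH\supseteq\cap_i\cH_i$ is exactly the derivation attached to $\cH\setminus\cap_i\cH_i$ in the restricted arrangement. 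The one case you leave implicit is $|\cap_i\cH_i|=n-1$, where $W$ is a line; but then all $\cH_i$ coincide and the claim is trivial. The trade-off: the paper's argument needs no restricted arrangements at this stage and only the explicit dual pairing, while yours isolates a cleaner structural statement and avoids the coefficient-killing computation at the cost of setting up the restriction, machinery the paper only introduces in \S 9.
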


\begin{proof}
{\rm (1)} If  $\cap_{i=1}^m\cH_i\subset \cH\in \cA^{(n-1)}$, 
then $P_{\cH}=\prod_{H\not\in\cH}p_H$ divides 
$\prod_{H\not\in\cap_{i=1}^m\cH_i}p_H=P_{\{\cH_1,\ldots,\cH_m\}}$. 
Hence the assertion is clear.

{\rm (2)} Let $\bar{r}:=n+m-1$. Take a subarrangement
$\cB\supset \cap_{i=1}^m\cH_i$ of $\cA$ with $\bar{r}$ hyperplanes. 
By Lemma \ref{m=r-n+1:DualBasis}, 
there exist $c_{\cH}\in K$ ($\cH\in \cB^{(n-1)}$) such that 
\begin{equation}
\label{eqn:DeltaHs}
\delta_{\cH_1}\cdots\delta_{\cH_m}=\sum_{\cH\in\cB^{(n-1)}}c_{\cH}\delta_{\cH}^m.
\end{equation}
It suffices to show that $c_{\cH}=0$ for all $\cH\not\supset \cap_{i=1}^{m}\cH_i$.
Fix $\cH\in \cB^{(n-1)}$ with
$\cH\not\supset \cap_{i=1}^{m}\cH_i$, 
and put $\overline{P}_\cH =\prod_{H\in \cB\setminus \cH} p_H$. 
Then $\deg \overline{P}_\cH=\bar{r}-(n-1)=m$.
Since there exists $H_0\in(\cap_{i=1}^{m}\cH_i)\setminus \cH$, 
we have $\delta_{\cH_i}\ast p_{H_0}=0$ for all $i=1,\ldots, m$,
and hence
$\delta_{\cH_1}\cdots\delta_{\cH_m}\ast\overline{P}_\cH =0$. 
Recall from \eqref{eqn:dual} that 
$$
\delta_{\cH'}^m\ast \overline{P}_{\cH}
=
\left\{
\begin{array}{ll}
m!\prod_{H\in\cB\setminus\cH}(\delta_{\cH'}\ast p_H)\ne 0 & \text{if $\cH'=\cH $}\\
0 & \text{otherwise}.
\end{array}
\right.
$$

Let the operator \eqref{eqn:DeltaHs} act on $\overline{P}_\cH$. Since
$$
0 = \delta_{\cH_1}\cdots\delta_{\cH_m}\ast\overline{P}_\cH
= \sum_{\cH\in\cB^{(n-1)}}c_{\cH}\delta_{\cH}^m\ast \overline{P}_\cH
= c_{\cH}\cdot m!\prod_{H\in\cB\setminus\cH}(\delta_{\cH}\ast p_H),
$$
we have $c_{\cH}=0$.
\end{proof}

\begin{proposition}
\label{m<r-n+1:SmallerGenerators}
If $m<r-n+1$, then
$$D^{(m)}(\cA)=\sum_{\cH\in\cA^{(n-1)}}SP_{\cH}\delta_{\cH}^m + S\epsilon_m$$
\end{proposition}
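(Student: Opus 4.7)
The plan is to show that the generating set of Theorem~\ref{Theorem5.28} reduces to the claimed one using Lemma~\ref{m<r-n+1:Linear Dependence}. Since Theorem~\ref{Theorem5.28} already tells us that
$$D^{(m)}(\cA)=\sum_{\cH_1,\ldots,\cH_m\in \cA^{(n-1)}} SP_{\{\cH_1,\ldots,\cH_m\}}\delta_{\cH_1}\cdots\delta_{\cH_m}+S\epsilon_m,$$
it suffices to show that every generator of the form $P_{\{\cH_1,\ldots,\cH_m\}}\delta_{\cH_1}\cdots\delta_{\cH_m}$ lies in $\sum_{\cH\in\cA^{(n-1)}}SP_{\cH}\delta_{\cH}^m$.

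Fix $\cH_1,\ldots,\cH_m\in\cA^{(n-1)}$ and set $L:=\bigcap_{i=1}^m \cH_i$. By Lemma~\ref{m<r-n+1:Linear Dependence}~(2), there exist scalars $c_\cH\in K$ such that
$$\delta_{\cH_1}\cdots\delta_{\cH_m}=\sum_{\cH\in\cA^{(n-1)},\, \cH\supset L} c_\cH\,\delta_\cH^m$$
as elements of $D(S)$. Multiplying this identity on the left by $P_{\{\cH_1,\ldots,\cH_m\}}\in S$ yields
$$P_{\{\cH_1,\ldots,\cH_m\}}\delta_{\cH_1}\cdots\delta_{\cH_m}=\sum_{\cH\supset L} c_\cH\, P_{\{\cH_1,\ldots,\cH_m\}}\delta_\cH^m.$$

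Now by Lemma~\ref{m<r-n+1:Linear Dependence}~(1), for every $\cH\in\cA^{(n-1)}$ with $\cH\supset L$, $P_\cH$ divides $P_{\{\cH_1,\ldots,\cH_m\}}$ in $S$, so we may write $P_{\{\cH_1,\ldots,\cH_m\}}=f_\cH P_\cH$ for some $f_\cH\in S$. Substituting this into the previous display gives
$$P_{\{\cH_1,\ldots,\cH_m\}}\delta_{\cH_1}\cdots\delta_{\cH_m}=\sum_{\cH\supset L}c_\cH f_\cH\, P_\cH\delta_\cH^m\in \sum_{\cH\in\cA^{(n-1)}}SP_\cH\delta_\cH^m,$$
which completes the reduction. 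There is no serious obstacle here since both parts of Lemma~\ref{m<r-n+1:Linear Dependence} do all the work; the only thing to be mindful of is that $P_{\{\cH_1,\ldots,\cH_m\}}$ is a polynomial (acting by left multiplication in $D(S)$), so no commutation issue arises when passing the scalar expansion of $\delta_{\cH_1}\cdots\delta_{\cH_m}$ through it.
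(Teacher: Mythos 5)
Your proposal is correct and follows essentially the same route as the paper: both apply Lemma~\ref{m<r-n+1:Linear Dependence}~(2) to expand $\delta_{\cH_1}\cdots\delta_{\cH_m}$ over the $\delta_\cH^m$ with $\cH\supset\bigcap_i\cH_i$, then use part~(1) to absorb $P_{\{\cH_1,\ldots,\cH_m\}}$ into $SP_\cH$, and conclude via Theorem~\ref{Theorem5.28}. Your version merely spells out the divisibility step more explicitly.
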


\begin{proof}
Let $\cH_{1},\ldots,\cH_{m}\in\cA^{(n-1)}$. 
By Lemma \ref{m<r-n+1:Linear Dependence}, 
\begin{eqnarray*}
P_{\{\cH_{1},\ldots,\cH_{m}\}}\delta_{\cH_1}\cdots\delta_{\cH_m} 
& \in & P_{\{\cH_{1},\ldots,\cH_{m}\}}\cdot \sum_{\cap_{i=1}^m\cH_i\subset\cH\in\cA^{(n-1)}}K\delta_\cH^m\\
& \subset & \sum_{\cap_{i=1}^m\cH_i\subset\cH\in\cA^{(n-1)}}S P_{\cH}\delta_\cH^m.
\end{eqnarray*}
Hence we obtain the assertion from \eqref{eqn:generatorsHolm}.
\end{proof}

The system of generators for $D^{(m)}(\cA)$ in 
Proposition \ref{m<r-n+1:SmallerGenerators} is still large. 
Next, {\bf we fix $m$ hyperplanes $H_1,\ldots,H_m$}, and
define an $S$-submodule 
$\Xi^{(m)}(\cA)$ of  $D^{(m)}(\cA)$ by
\begin{equation}
\label{eqn:Xi}
\Xi^{(m)}(\cA):=\{\theta\in D^{(m)}(\cA)\,|\, \theta\ast (p_{H_1}\cdots p_{H_m})=0\}.
\end{equation}
For $\cH\in\cA^{(n-1)}$ with $\cH\cap \{H_1, \ldots, H_m \}\ne\emptyset$,
we have $\delta_{\cH}\ast p_{H_i}=0$ for some $i$, and hence
$P_{\cH}\delta_{\cH}^m\in \Xi^{(m)}(\cA)$.
Furthermore we have the following.

\begin{theorem}
\label{theorem10.3}
If $m<r-n+1$, then
$$
D^{(m)}(\cA)=\Xi^{(m)}(\cA)\oplus S\epsilon_m
=\sum_{\substack{ \cH\in\cA^{(n-1)}\\ \cH\cap \{H_1, \ldots, H_m \}\ne\emptyset}}
SP_{\cH}\delta_{\cH}^m \oplus S\epsilon_m
$$
Moreover, the set 
$\{P_{\cH}\delta_{\cH}^m\,|\,  \cH\in\cA^{(n-1)}, \cH\cap \{H_1, \ldots, H_m \}\ne\emptyset\}$ 
is a minimal system of generators for $\Xi^{(m)}(\cA)$ over $S$.
\end{theorem}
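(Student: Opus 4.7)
The plan is to establish the two equalities first, then minimality. For the splitting $D^{(m)}(\cA) = \Xi^{(m)}(\cA) \oplus S\epsilon_m$, I would define the $S$-linear map
\[
\phi: D^{(m)}(\cA) \to S,\qquad \phi(\theta) := \frac{\theta \ast (p_{H_1}\cdots p_{H_m})}{p_{H_1}\cdots p_{H_m}}.
\]
Its well-definedness (divisibility of the numerator by the denominator) is verified on the $S$-generators supplied by Proposition \ref{m<r-n+1:SmallerGenerators}: by the Leibniz rule, $\delta_\cH^m\ast(p_{H_1}\cdots p_{H_m}) = m!\prod_i(\delta_\cH \ast p_{H_i})$, which vanishes when $\cH$ meets $\{H_1,\ldots,H_m\}$ and is otherwise a nonzero scalar, in which case $p_{H_1}\cdots p_{H_m}$ itself divides $P_\cH$; and $\epsilon_m\ast(p_{H_1}\cdots p_{H_m}) = m!\,p_{H_1}\cdots p_{H_m}$. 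Since $\phi(\epsilon_m) = m! \in K^\times$, the map $\phi$ is split surjective with kernel $\Xi^{(m)}(\cA)$, giving the first equality.

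For the second equality, write $M := \sum_{\cH\cap\{H_1,\ldots,H_m\}\ne\emptyset} SP_\cH\delta_\cH^m$. The inclusion $M\subseteq\Xi^{(m)}(\cA)$ is immediate. By the splitting, the reverse inclusion reduces to showing $D^{(m)}(\cA) = M + S\epsilon_m$, and via Proposition \ref{m<r-n+1:SmallerGenerators} this reduces further to $P_{\cH_*}\delta_{\cH_*}^m \in M + S\epsilon_m$ for every $\cH_* \in \cA^{(n-1)}$ disjoint from $\{H_1,\ldots,H_m\}$. For such $\cH_*$, I set $\cS := \cH_*\cup\{H_1,\ldots,H_m\}$, a generic subarrangement of cardinality $n+m-1$; applying Proposition \ref{m=r-n+1:Basis} to $\cS$, with $P'_\cH := \prod_{H\in\cS\setminus\cH} p_H$, yields a free $S$-basis $\{P'_\cH\delta_\cH^m\}_{\cH\in\cS^{(n-1)}}$ of $D^{(m)}(\cS)$. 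Since $\epsilon_m\in D^{(m)}(\cS)$ and has the same polynomial degree $m$ as each basis element, we expand $\epsilon_m = \sum_\cH g_\cH P'_\cH\delta_\cH^m$ with $g_\cH\in K$; evaluating at each $P'_{\cH_0}$ and using the duality of Lemma \ref{m=r-n+1:DualBasis} identifies $g_\cH = \mu_\cH^{-1}$, where $\mu_\cH := \prod_{H\in\cS\setminus\cH}(\delta_\cH\ast p_H)\in K^\times$. Multiplying through by $Q_\cS := \prod_{H\in\cA\setminus\cS}p_H$ converts each $P'_\cH$ into $P_\cH$ and produces the identity
\[
Q_\cS\,\epsilon_m \;=\; \sum_{\cH\in\cS^{(n-1)}}\mu_\cH^{-1}\, P_\cH\delta_\cH^m
\]
inside $D^{(m)}(\cA)$. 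Since any $\cH \in \cS^{(n-1)}\setminus\{\cH_*\}$ must contain some $H_i$ (otherwise $\cH\subseteq \cH_*$ forces $\cH = \cH_*$ by cardinality), isolating the $\cH_*$ term expresses $P_{\cH_*}\delta_{\cH_*}^m$ as an element of $S\epsilon_m + M$, closing this step.

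For minimality, I would invoke graded Nakayama: since $\Xi^{(m)}(\cA)$ is now $S$-generated in polynomial degree $r-n+1$, the quotient $\Xi^{(m)}(\cA)/\fkm\Xi^{(m)}(\cA)$ is concentrated in that degree, where it coincides with the $K$-span of $\{P_\cH\delta_\cH^m : \cH\cap\{H_1,\ldots,H_m\}\ne\emptyset\}$. Their $K$-linear independence in $D^{(m)}(\cA)$ follows by expanding each $\delta_\cH^m = \sum_\alpha a_{\cH,\alpha}\partial^\alpha$ in the free $S$-basis $\{\partial^\alpha\}$ of $D(S)$ and invoking the $K$-linear independence of $\{P_\cH : \cH \in \cA^{(n-1)}\}$ in $S_{r-n+1}$ from Lemma \ref{DualBasis}. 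The main obstacle is the identity in the second paragraph: the key insight is to work with the subarrangement $\cS$ of cardinality exactly $n+m-1$ in order to import the free basis provided by Proposition \ref{m=r-n+1:Basis} and then lift back to $\cA$ via multiplication by $Q_\cS$; the resulting scalars $\mu_\cH^{-1}$ are defined and nonzero precisely because of the genericity of $\cA$.
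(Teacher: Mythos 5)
Your proposal is correct and follows the paper's proof in all essentials: the splitting off of $S\epsilon_m$ via divisibility of $\theta\ast(p_{H_1}\cdots p_{H_m})$ by $p_{H_1}\cdots p_{H_m}$, and the reduction of the second equality to the subarrangement $\cS=\cH_\ast\cup\{H_1,\ldots,H_m\}$ of cardinality $n+m-1$, where Proposition~\ref{m=r-n+1:Basis} gives a free basis in which $\epsilon_m$ expands with nonzero scalar coefficients and is then multiplied by $\prod_{H\in\cA\setminus\cS}p_H$, are exactly the paper's steps. The only divergence is in the minimality argument: the paper lets a putative relation $\sum_\cH c_\cH P_\cH\delta_\cH^m=0$ act on products $p_{H_{i_1}}\cdots p_{H_{i_m}}$ and varies the test hyperplanes, whereas you compare coefficients of $\partial^{\aalpha}$ directly and invoke the $K$-linear independence of the $P_\cH$ from Lemma~\ref{DualBasis}; your version is slightly more direct and equally valid, since both rest on that same lemma.
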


\begin{proof}
Let $\theta\in D^{(m)}(\cA)$. 
Then $\theta-\frac{1}{m!}\,\frac{\theta\ast(p_{H_1}\cdots p_{H_m})}{p_{H_1}\cdots p_{H_m}}\epsilon_m\in\Xi^{(m)}(\cA)$, 
since $\theta\in D^{(m)}(\cA)\subset D^{(m)}(\langle p_{H_1}\cdots p_{H_m}\rangle)$
by \eqref{eqn:2.4}.
So we have $D^{(m)}(\cA)=\Xi^{(m)}(\cA)+ S\epsilon_m$.
Moreover, $\epsilon_m\ast (p_{H_1}\cdots p_{H_m})=m!p_{H_1}\cdots p_{H_m}\ne 0$ 
implies that $\Xi^{(m)}(\cA)\cap S\epsilon_m=0$.

Next, we show the second equality. By Proposition \ref {m<r-n+1:SmallerGenerators}, 
it suffices to show that 
$$P_{\cH_0}\delta^m_{\cH_0}\in \sum_{\substack{ \cH\in\cA^{(n-1)} \\\cH\cap \{H_1, \ldots, H_m \}\ne\emptyset}}SP_{\cH}\delta_{\cH}^m \oplus S\epsilon_m
$$ for every $\cH_0\in\cA^{(n-1)}$ with $\cH_0\cap\{H_1,\ldots,H_m\}=\emptyset$.
Put $\cB:=\cH_0\cup\{H_1, \ldots, H_m \}$. 
By Proposition \ref{m=r-n+1:Basis}, 
$$D^{(m)}(\cB)=\bigoplus_{\cH\in\cB^{(n-1)}}S\overline{P}_\cH\delta^m_{\cH},$$
where $\overline{P}_{\cH}=\prod_{H\in\cB\setminus\cH}p_H$.
Since $\epsilon_m\in D^{(m)}(\cB)$, 
there exist $c_{\cH}\in S$ ($\cH\in\cB^{(n-1)}$) such that 
\begin{equation}
\label{eqn:epsilon}
\epsilon_m =\sum_{\cH\in\cB^{(n-1)}}c_{\cH}\overline{P}_{\cH}\delta^m_{\cH}.
\end{equation}
(By looking at polynomial degrees, we see $c_{\cH}\in K$.)
Multiplying $q:=\prod_{H\in\cA\setminus\cB}p_H$ from the left, 
we have
\begin{equation}
\label{eqn:q epsilon} 
q\epsilon_{m}=\sum_{\cH\in\cB^{(n-1)}}c_{\cH}P_{\cH}\delta^m_{\cH}.
\end{equation}
Let the operator \eqref{eqn:epsilon} act on $\overline{P}_{\cH_0}$. 
Since
$$
0\ne m!\overline{P}_{\cH_0}=
m!c_{\cH_0}\overline{P}_{\cH_0}\cdot \prod_{H\in\cB\setminus\cH_0}\delta_{\cH_0}\ast p_H
=
m!c_{\cH_0}\overline{P}_{\cH_0}\cdot \prod_{i=1}^m(\delta_{\cH_0}\ast p_{H_i}),
$$
we have $c_{\cH_0}\ne 0$. 
Hence, we have
$$P_{\cH_0}\delta_{\cH_0}^m=c_{\cH_0}^{-1}\left(q\epsilon_m-\sum_{\substack{ \cH\in\cB^{(n-1)}\\ \cH\ne\cH_0}}c_{\cH}P_{\cH}\delta^m_{\cH}\right)\in \sum_{\substack{ \cH\in\cA^{(n-1)} \\\cH\cap \{H_1, \ldots, H_m \}\ne\emptyset}}SP_{\cH}\delta_{\cH}^m \oplus S\epsilon_m.$$

Finally, we show the minimality.
It suffices to show that 
the set 
$\{P_{\cH}\delta_{\cH}^m\,|\,  \cH\in\cA^{(n-1)}, \cH\cap \{H_1, \ldots, H_m \}\ne\emptyset\}$
 is linearly independent over $K$,
 since all $P_{\cH}\delta_{\cH}^m$ have the same polynomial degree.
Suppose that 
\begin{equation}
\label{eqn:LinearCombination}
\sum_{\substack{ \cH\in\cA^{(n-1)} \\\cH\cap \{H_1, \ldots, H_m \}\ne\emptyset}}
c_{\cH}P_{\cH}\delta_{\cH}^m=0\ (c_{\cH}\in K).
\end{equation}
Fix arbitrary hyperplanes $H_{i_1}, \ldots, H_{i_m}\in \cA$, and put
$q':=p_{H_{i_1}}\cdots p_{H_{i_m}}$ and $\cB':=\cA\setminus \{H_{i_1}, \ldots, H_{i_m}\}$.
Let the operator \eqref{eqn:LinearCombination} act on $q'$. 
Then we have
$$
\sum_{\substack{\cH\in\cB'^{(n-1)} \\ \cH\cap \{H_1, \ldots, H_m \}\ne\emptyset}}
c_{\cH}{P}_{\cH}\prod_{\nu=1}^{m}(\delta_\cH \ast p_{H_{i_{\nu}}})=0.
$$
By Lemma \ref{DualBasis}, the set $\{{P}_{\cH}\, |\, \cH\in\cB'^{(n-1)}\}$ 
is linearly independent over $K$.
Hence $c_{\cH}=0$ for $\cH\in\cB'^{(n-1)}$ with
$\cH\cap \{H_1, \ldots, H_m \}\ne\emptyset$.
For $\cH\in\cA^{(n-1)}$ with
$\cH\cap \{H_1, \ldots, H_m \}\ne\emptyset$, 
we may take $H_{i_1}, \ldots, H_{i_m}\in \cA$
so that $\cH\in\cB'^{(n-1)}$, since $r> m+n-1$.
Hence we have finished the proof. 
\end{proof}

\begin{corollary}[cf. Conjecture 6.8 in \cite{Snellman}]
\label{m<r-n+1:NumberOfGenerators}
The $S$-module
$\Xi^{(m)}(\cA)$ is minimally generated by
$\binom{r}{n-1}-\binom{r-m}{n-1}$ operators of polynomial degree $r-n+1$.
\end{corollary}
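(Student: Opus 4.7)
The corollary is a direct bookkeeping consequence of Theorem \ref{theorem10.3}, so the plan is essentially to compute the cardinality and polynomial degree of the explicit minimal generating set produced there. The main point is that there is no remaining structural work: minimality and the fact that this set generates $\Xi^{(m)}(\cA)$ are already in hand.

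First, I would verify the common polynomial degree. Each element of the generating set from Theorem \ref{theorem10.3} has the form $P_\cH \delta_\cH^m$ with $\cH \in \cA^{(n-1)}$, and since $\delta_\cH^m$ is a constant-coefficient operator (polynomial degree $0$), we have $\pdeg(P_\cH \delta_\cH^m) = \deg P_\cH$. By \eqref{degPH} this equals $r-n+1$, independent of $\cH$. This justifies the phrase ``of polynomial degree $r-n+1$'' in the statement.

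Next, I would count the generators by inclusion-exclusion on the condition $\cH \cap \{H_1,\ldots,H_m\} \ne \emptyset$. The cardinality of $\cA^{(n-1)}$ is $\binom{r}{n-1}$. The subsets $\cH \in \cA^{(n-1)}$ failing the intersection condition are precisely the $(n-1)$-element subsets of the $(r-m)$-element set $\cA \setminus \{H_1,\ldots,H_m\}$, and there are $\binom{r-m}{n-1}$ of them (when $r-m < n-1$ this binomial coefficient is zero, which matches the fact that no such $\cH$ exists). Subtracting gives exactly $\binom{r}{n-1} - \binom{r-m}{n-1}$ generators, finishing the proof.

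There is no genuine obstacle here; the only small caveat to keep in mind is that Theorem \ref{theorem10.3} requires $m < r-n+1$, which is the standing hypothesis in this section and also ensures $r > m+n-1$, so that $\binom{r-m}{n-1}$ is a meaningful (possibly zero) count and the indexing set is nonempty.
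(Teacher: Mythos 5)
Your proposal is correct and matches the paper's (implicit) argument exactly: the corollary is stated without proof as an immediate count of the minimal generating set $\{P_\cH\delta_\cH^m \mid \cH\in\cA^{(n-1)},\ \cH\cap\{H_1,\ldots,H_m\}\neq\emptyset\}$ from Theorem \ref{theorem10.3}, using $\deg P_\cH = r-n+1$ from \eqref{degPH} and complementary counting of the $\binom{r-m}{n-1}$ subsets avoiding $\{H_1,\ldots,H_m\}$. No further comment is needed.
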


\begin{remark}
\label{m<r-n+1:Nonfreeness}
We can show
\begin{equation*}
\binom{r}{n-1}-\binom{r-m}{n-1}+1> \binom{n+m-1}{n-1},
\end{equation*}
supposing that $m<r-n+1$.
Then by Proposition \ref{Proposition4.18} and Corollary \ref{m<r-n+1:NumberOfGenerators}
we see that,
for $n\geq 3$ and $m<r-n+1$,
$D^{(m)}(\cA)$ is not free over $S$,
which was proved by Holm \cite[I$\!$I$\!$I Proposition 6.8]{Holm-thesis}.
\end{remark}


\section{Generalization of Yuzvinsky's paper \cite{Yuzvinsky}}

In this section, we assume
$m\leq r-n+1$, and we construct a minimal free resolution of $\Xi^{(m)}(\cA)$
when $m< r-n+1$ and $n\geq 3$.
We generalize the construction in \cite{Yuzvinsky}
step by step, and basically we succeed Yuzvinsky's notation.

Let $V:=K^n$.
Recall that, for $\cH\in \cA^{(n-1)}$,
 $\delta_\cH\in (V^\ast)^\ast=V$ is
the nonzero derivation with constant coefficients
such that $\delta_\cH\ast p_H=0$ for all $H\in \cH$.
Under the identification $(V^\ast)^\ast=V$,
$
K\delta_\cH$
corresponds to the linear subspace
$[\cH]:=\bigcap_{H\in\cH}H
=\bigcap_{H\in\cH}(p_H=0)$ of $V$.
Similarly,
$\cH\in \cA^{(n-j)}$
corresponds to the linear subspace
$[\cH]=\bigcap_{H\in\cH}H\in L_j$,
where $L_j$ is the set of elements of dimension $j$ of the intersection lattice
of $\cA$.

For $\cH\in \cA^{(n-j)}$ with $1\leq j\leq n$, set
$$
\Delta_\cH:=
\sum_{\cH'\in (\cA\setminus\cH)^{(j-1)}}K\delta_{\cH\cup\cH'}^m.
$$
Note that
$$
\Delta_\cH= K\delta^m_\cH\qquad
\text{for $\cH\in \cA^{(n-1)}$},
$$
and
$$
\Delta_\emptyset= \sum_{\cH\in \cA^{(n-1)}}K\delta_{\cH}^m.
$$
Each $\Delta_\cH$ is a subspace of $\Delta_\emptyset$.

\begin{example}
Let $m=1$.
Then
$$
\Delta_\cH=
\{
\delta\in (V^\ast)^\ast\,|\,
\delta\ast p_H=0
\quad
\text{for all $H\in \cH$}\}.
$$
Hence, under the identification $(V^\ast)^\ast=V$,
$\Delta_\cH$
corresponds to
$[\cH]=\bigcap_{H\in\cH}H
=\bigcap_{H\in\cH}(p_H=0)$.
\end{example}

\bigskip

\begin{lemma}
\label{lemma:DimOfDelta}
Let $1\leq j\leq n$, and
let $\cH\in \cA^{(n-j)}$.

Take $\cA':=\{ H_1, H_2,\ldots, H_{\bar{r}}\}\subseteq \cA$ 
with $\bar{r}=m+n-1$
so that $\cH\subseteq \cA'$.

Then $\{\delta_{\cH\cup\cH'}^m\, |\, \cH'\in (\cA'\setminus\cH)^{(j-1)}\}$
forms a basis of $\Delta_\cH$, and
$\dim \Delta_\cH = \binom{\bar{r}-(n-j)}{j-1}=\binom{m+j-1}{j-1}$.
\end{lemma}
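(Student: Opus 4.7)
The plan is to sandwich $\Delta_\cH$ between a natural ambient space of the right dimension and a linearly independent subset produced by Lemma \ref{m=r-n+1:DualBasis}. Under the identification $(V^\ast)^\ast = V$, each $\delta_\cK$ with $\cK \in \cA^{(n-1)}$ spans the line $[\cK] = \bigcap_{H \in \cK} H \subseteq V$. For $\cH \in \cA^{(n-j)}$ and $\cH' \in (\cA \setminus \cH)^{(j-1)}$ one has $\cH \cup \cH' \in \cA^{(n-1)}$ and $\delta_{\cH \cup \cH'} \in [\cH \cup \cH'] \subseteq [\cH]$, while genericity gives $\dim [\cH] = j$. Hence $\delta_{\cH \cup \cH'}^m \in \mathrm{Sym}^m([\cH])$, so $\Delta_\cH \subseteq \mathrm{Sym}^m([\cH])$, an ambient space of dimension $\binom{m+j-1}{j-1}$.

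Next I would apply Lemma \ref{m=r-n+1:DualBasis}(1) to the generic sub-arrangement $\cA'$, which has exactly $\bar{r} = m+n-1$ hyperplanes: the family $\{\delta_\cK^m : \cK \in (\cA')^{(n-1)}\}$ is a $K$-basis of $\sum_{|\aalpha|=m} K\partial^\aalpha$, and in particular linearly independent over $K$. Since $\cH \subseteq \cA'$, the subfamily indexed by $\cK = \cH \cup \cH'$ with $\cH' \in (\cA' \setminus \cH)^{(j-1)}$ is a subset of this basis, hence remains linearly independent. Its cardinality is exactly $\binom{|\cA' \setminus \cH|}{j-1} = \binom{\bar{r}-(n-j)}{j-1} = \binom{m+j-1}{j-1}$.

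Combining the two steps, we have a linearly independent family inside $\Delta_\cH \subseteq \mathrm{Sym}^m([\cH])$ whose size equals $\dim \mathrm{Sym}^m([\cH])$; it is therefore a basis of $\mathrm{Sym}^m([\cH])$, and the equality $\Delta_\cH = \mathrm{Sym}^m([\cH])$ drops out as a byproduct. No real obstacle is anticipated: the only slightly delicate point is the clean identification of $\delta_\cK$ with a vector in the line $[\cK]$ under $(V^\ast)^\ast \cong V$, but this is exactly the viewpoint already built into the setup at the start of Section 9.
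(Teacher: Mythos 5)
Your proof is correct, and it diverges from the paper's in an instructive way on the spanning step. Both arguments get linear independence from the same source: Lemma \ref{m=r-n+1:DualBasis} applied to the sub\-arrangement $\cA'$ with $\bar{r}=m+n-1$ hyperplanes exhibits $\{\delta_{\cK}^m \mid \cK\in(\cA')^{(n-1)}\}$ as a basis of $\sum_{|\aalpha|=m}K\partial^\aalpha$, of which your family is a subset. Where you differ is in showing that this family spans all of $\Delta_\cH$, i.e.\ that the remaining generators $\delta_{\cH\cup\cH'''}^m$ with $\cH'''\in(\cA\setminus\cH)^{(j-1)}\setminus(\cA'\setminus\cH)^{(j-1)}$ contribute nothing new. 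The paper does this by brute force: it expands each such $\delta_{\cH\cup\cH'''}^m$ in the dual basis via the pairing with the polynomials $P'_{\cH''}=\prod_{H\in\cA'\setminus\cH''}p_H$, and kills every coefficient indexed by $\cH''\not\supseteq\cH$ through a divisibility argument ($p_H\mid P'_{\cH''}$ for some $H\in\cH$ annihilated by $\delta_{\cH\cup\cH'''}$). You instead cap the dimension from above: every generator $\delta_{\cH\cup\cH'}$ lies on the line $[\cH\cup\cH']\subseteq[\cH]$, so $\Delta_\cH\subseteq\mathrm{Sym}^m([\cH])$, a space of dimension $\binom{m+j-1}{j-1}$ since $\dim[\cH]=j$ by genericity; a linearly independent family of that exact cardinality inside $\Delta_\cH$ then forces equality. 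Your route is shorter and more conceptual, and it yields the extra identification $\Delta_\cH=\mathrm{Sym}^m([\cH])$ as a byproduct (consistent with the paper's Example for $m=1$, where $\Delta_\cH$ is identified with $[\cH]$ itself); the paper's route is more explicit and produces the actual change-of-basis coefficients $\delta_{\cH\cup\cH'''}^m\ast P'_{\cH\cup\cH'}/\delta_{\cH\cup\cH'}^m\ast P'_{\cH\cup\cH'}$, which is occasionally useful but not needed elsewhere. Both are complete proofs.
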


\begin{proof}
By Lemma \ref{m=r-n+1:DualBasis}, 
\begin{equation}
\label{basis'}
\sum_{|\aalpha|=m}K \partial^\aalpha
=
\bigoplus_{\cH''\in (\cA')^{(n-1)}} K\delta_{\cH''}^m.
\end{equation}
Hence $\delta_{\cH\cup\cH'}^m$ $(\cH'\in (\cA'\setminus\cH)^{(j-1)})$
are linearly independent.

Let $\cH'''\in (\cA\setminus\cH)^{(j-1)} \setminus (\cA')^{(j-1)}$.
Then
\begin{equation}
\label{}
\delta_{\cH\cup\cH'''}^m
=
\sum_{\cH''\in (\cA')^{(n-1)}}
\frac{\delta_{\cH\cup\cH'''}^m \ast P'_{\cH''}}{\delta_{\cH''}^m \ast P'_{\cH''}}
\delta_{\cH''}^m,
\end{equation}
where
\begin{equation}
\label{P'}
P'_{\cH''}:=\prod_{H\in\cA'\setminus \cH''}p_H.
\end{equation}

For $\cH''\not\supseteq\cH$, there exists $H\in \cH\setminus \cH''$.
Then $p_H$ divides $P'_{\cH''}$, and hence
$\delta_{\cH\cup\cH'''}^m \ast P'_{\cH''}=0$.
Therefore
\begin{equation}
\label{}
\delta_{\cH\cup\cH'''}^m
=
\sum_{\cH'\in (\cA'\setminus\cH)^{(j-1)}}
\frac{\delta_{\cH\cup\cH'''}^m \ast P'_{\cH\cup\cH'}}{\delta_{\cH\cup\cH'}^m \ast P'_{\cH\cup\cH'}}
\delta_{\cH\cup\cH'}^m.
\end{equation}

Hence $\{\delta_{\cH\cup\cH'}^m\, |\, \cH'\in (\cA'\setminus\cH)^{(j-1)}\}$
forms a basis of $\Delta_\cH$, and
$\dim \Delta_\cH = \binom{\bar{r}-(n-j)}{j-1}=\binom{m+j-1}{j-1}$.
\end{proof}

\bigskip
Let $\cA=\{ H_1, H_2,\ldots, H_r\}$.
We write $H_i\prec H_j$ if $i<j$.

We define the complex $C_\ast(\cA)=C_\ast$ as follows.
For $j=1,2,\ldots, n$, set
$$
C_{n-j}:=
\bigoplus_{\cH\in\cA^{(n-j)}}
\Delta_\cH \e_{\wedge\cH},
$$
where $\e_{\wedge\cH}$ is just a symbol.
In particular,
$$
C_{n-1}:=
\bigoplus_{\cH\in\cA^{(n-1)}}
K\delta^m_\cH \e_{\wedge\cH},
$$
and
$$
C_{0}:=
\Delta_\emptyset \e_{\wedge\emptyset}.
$$
The differential
$\partial_j:C_j\to C_{j-1}$ is defined by
$$
C_{j}=
\bigoplus_{\cH\in\cA^{(j)}}
\Delta_\cH \e_{\wedge\cH}
\ni \xi \e_{\wedge\cH}
\mapsto
\sum_{H\in \cH} (-1)^{l_\cH(H)} \xi
\e_{\wedge(\cH\setminus\{ H\})}\in
C_{j-1},
$$
where
$$
l_\cH(H):=\sharp\{ H'\in \cH\, |\, H'\prec H\}.
$$
Set
$$
C_n:=\Ker\, \partial_{n-1}.
$$

\begin{lemma}[cf. Lemma 1.1 in \cite{Yuzvinsky}]
\label{Yuz:lemma1.1}
The sequence $C_\ast$ is exact.
\end{lemma}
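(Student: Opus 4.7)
My plan is to generalize Yuzvinsky's proof of \cite[Lemma 1.1]{Yuzvinsky} from the case $m=1$ to general $m$.

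First, I would identify $\Delta_\cH$ geometrically. By Lemma \ref{lemma:DimOfDelta}, for $\cH\in\cA^{(n-j)}$ we have $\dim\Delta_\cH=\binom{m+j-1}{j-1}=\dim\,\mathrm{Sym}^m[\cH]$, where $[\cH]=\bigcap_{H\in\cH}H\subseteq V$ is the $j$-dimensional flat. Since each generator $\delta_{\cH\cup\cH'}$ already lies in $[\cH\cup\cH']\subseteq[\cH]$, we have $\Delta_\cH\subseteq\mathrm{Sym}^m[\cH]$, and equal dimensions force equality. Under this identification, each component of the differential becomes the natural inclusion $\mathrm{Sym}^m[\cH]\hookrightarrow\mathrm{Sym}^m[\cH\setminus\{H\}]$ induced by the flat inclusion $[\cH]\subseteq[\cH\setminus\{H\}]$, with signs coming from the ordering of $\cA$.

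Second, I would handle the base case $r=m+n-1$. Lemma \ref{m=r-n+1:DualBasis}(1) furnishes a basis $\{\delta_{\cH''}^m\mid\cH''\in\cA^{(n-1)}\}$ of $\mathrm{Sym}^m V$, and the argument in the proof of Lemma \ref{lemma:DimOfDelta} shows that $\{\delta_{\cH''}^m\mid\cH''\supseteq\cH\}$ is a basis of $\Delta_\cH$ for each $\cH\in\cA^{(n-j)}$. Consequently, $C_*$ splits as a direct sum, indexed by $\cH''\in\cA^{(n-1)}$, of subcomplexes $C_*^{\cH''}$ whose $k$th term is $\bigoplus_{\cH\subseteq\cH'',\,|\cH|=k}K\delta_{\cH''}^m\e_{\wedge\cH}$. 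Each $C_*^{\cH''}$ is the augmented simplicial chain complex of the $(n-1)$-simplex on $\cH''$ tensored with the line $K\delta_{\cH''}^m$, hence is acyclic.

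For $r>m+n-1$, I would induct on $r$ (and on $n$) by deleting a hyperplane $H_0\in\cA$. Writing $\cA'':=\cA\setminus\{H_0\}$ and splitting $C_*(\cA)$ according to whether $H_0\in\cH$, one obtains a short exact sequence of complexes $0\to C_*(\cA'')\to C_*(\cA)\to Q_*\to 0$. After the substitution $\cH=\{H_0\}\cup\cH_1$ with $\cH_1\subseteq\cA''$ and a degree shift by one, $Q_*$ is identified with the analogous complex for the restricted arrangement $\{H\cap H_0\mid H\in\cA''\}$---a generic arrangement of $r-1$ hyperplanes in the $(n-1)$-dimensional ambient $H_0$---via $\mathrm{Sym}^m[\cH]=\mathrm{Sym}^m(H_0\cap[\cH_1])$. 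The induction hypothesis (on $r$ for $C_*(\cA'')$ and on $n$ for $Q_*$, with base cases $r=m+n-1$ above and $n=2$ where exactness is immediate from the definition of $C_2=\Ker\partial_1$) yields exactness of both flanking complexes, hence of $C_*(\cA)$ by the long exact sequence in homology. The main obstacle is the bookkeeping in this inductive step: one must verify that the sign conventions behave correctly under the substitution $\cH\leftrightarrow\{H_0\}\cup\cH_1$ and that the restricted arrangement in $H_0$ is still generic with the required number of hyperplanes; I do not anticipate needing genuinely new ideas beyond Yuzvinsky's.
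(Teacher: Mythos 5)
Your proposal is correct and follows essentially the same route as the paper: the base case $r=m+n-1$ via the splitting of $C_*$ into augmented simplicial chain complexes $K\delta_{\cH''}^m\otimes\tilde{S}(\cH'')$, the trivial case $n=2$, and the inductive step via the deletion--restriction short exact sequence $0\to C_*(\cA\setminus\{H_0\})\to C_*(\cA)\to C_*(\cA^{H_0})(-1)\to 0$, whose flanking terms are exact by induction. Your identification $\Delta_\cH=\mathrm{Sym}^m[\cH]$ is a clean gloss on Lemma \ref{lemma:DimOfDelta} (which is exactly what the paper invokes to see that $\Delta_\cH$ is unchanged upon deleting $H_0$ when $r>m+n-1$), not a genuinely different argument.
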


\begin{proof}
As in \cite[Lemma 1.1]{Yuzvinsky},
we prove the assertion by induction.

Let $r=m+n-1$.
Then by Lemma \ref{m=r-n+1:DualBasis}
$$
\Delta_\cH=\bigoplus_{\cH'\in (\cA\setminus\cH)^{(j-1)}}
K\delta_{\cH\cup\cH'}^m
\qquad\text{for $\cH\in \cA^{(n-j)}$.}
$$
Hence
$$
C_{n-j}=\bigoplus_{\cH\in \cA^{(n-j)}}\bigoplus_{\cH'\in (\cA\setminus\cH)^{(j-1)}}
K\delta_{\cH\cup\cH'}^m \e_{\wedge\cH}
=
\bigoplus_{\cH\in \cA^{(n-1)}}
K\delta_\cH^m
\otimes
(\bigoplus_{\cH'\in \cH^{(n-j)}} K\e_{\wedge\cH'}).
$$
Thus, in this case, with $C_n=0$,
$$
C_\ast=
\bigoplus_{\cH\in \cA^{(n-1)}}
K\delta_\cH^m
\otimes \tilde{S}(\cH),
$$
where $\tilde{S}(\cH)$
is the augmented chain complex of the simplex with vertex set $\cH$.
Hence $C_\ast$ is exact.

For $n=2$,
the sequence
$$
\begin{CD}
0 @>>>
\Ker\,\partial_1
@>>>
C_1
@> \partial_1 >>
C_0 @>>> 0\\
@. @. @| @| @.\\
@. @. 
\bigoplus_{H\in\cA}K\delta_H^m
@>>>
\sum_{H\in\cA}K\delta_H^m
@. 
\end{CD}
$$
is clearly exact.

Suppose that $n>2$ and $r> m+n-1$.
Consider the arrangements
$\cA\setminus\{ H_r\}$ and
$\cA^{ H_r}$.
Since $r> m+n-1$, we have
$\Delta_\cH(\cA)=\Delta_{\cH}(\cA\setminus\{ H_r\})$
for $\cH\in (\cA\setminus\{ H_r\})^{(n-j)}$
by Lemma \ref{lemma:DimOfDelta}.
Hence
$$
0\to C_*(\cA\setminus\{ H_r\})
\to C_*(\cA)
\to C_*(\cA^{ H_r})(-1)
\to 0
$$
is exact.
We thus have the assertion by induction.
\end{proof}

Let $\cH\in\cA^{(n-j)}$ with $j=1,2,\ldots, n$,
and
let $C_\ast^{[\cH]}:=C_\ast(\cA^{[\cH]})$.
For $\cH'\in (\cA\setminus\cH)^{(j-t)}$,
we have
$$
\Delta_{\cH'}(\cA^{[\cH]})=
\sum_{\cH''\in (\cA\setminus \cH\cup\cH')^{(t-1)}}
K(\delta^{[\cH]}_{\cH'\cup\cH''})^m.
$$
Since we may identify $\delta^{[\cH]}_{\cH'\cup\cH''}$
with $\delta_{\cH\cup\cH'\cup\cH''}$,
we may identify $\Delta_{\cH'}(\cA^{[\cH]})$
with $\Delta_{\cH\cup\cH'}$. Hence
\begin{equation}
\label{eqn:IdentifyWithA''}
C_{j-t}^{[\cH]}=
\bigoplus_{\cH'\in (\cA\setminus\cH)^{(j-t)}}
\Delta_{\cH\cup\cH'} \e_{\wedge\cH'}\e_\cH
\end{equation}
for $t=1,2,\ldots, j$, where $\e_\cH$ is again a symbol.

We put
$$
E_{[\cH]}:=C_j^{[\cH]}
:=\Ker(\partial^{[\cH]}_{j-1}:
C_{j-1}^{[\cH]}\to
C_{j-2}^{[\cH]})
$$
for $\cH\in\cA^{(n-j)}$ with $j\geq 2$, and
$$
E_{[\cH]}:=
K\delta_\cH^m\e_\cH
$$
for $\cH\in\cA^{(n-1)}$.
Then we put
$$
E_j:=\bigoplus_{\cH\in\cA^{(n-j)}}E_{[\cH]}.
$$
for $j=1,2,\ldots, n$.

\begin{remark}[cf. Remark 1.2 in \cite{Yuzvinsky}]
\label{remark:Yuz1.2}
Let $1\leq j\leq n$ and $\cH\in\cA^{(n-j)}$. Then
$$
\dim E_{[\cH]}
=
\binom{r-m-n+j-1}{j-1}.
$$
\end{remark}

\begin{proof}
By Lemma \ref{Yuz:lemma1.1},
$$
\dim E_{[\cH]}
=
\dim C_j^{[\cH]}
=\sum_{l=1}^j(-1)^{l-1}
\dim C_{j-l}^{[\cH]}.
$$
Then by Lemma \ref{lemma:DimOfDelta}
$$
\dim C_{j-l}^{[\cH]}
=
\binom{r-n+j}{j-l}
\binom{m+l-1}{l-1}.
$$
Hence
\begin{eqnarray*}
\dim E_{[\cH]}
&=&
\sum_{l=1}^j(-1)^{l-1}
\binom{m+l-1}{l-1}
\binom{r-n+j}{j-l}\\
&=&
\sum_{l=1}^j(-1)^{l-1}
\binom{m+l-2}{l-1}
\binom{r-n+j-1}{j-l}
=\cdots \\
&=&
\sum_{l=1}^j(-1)^{l-1}
\binom{l-2}{l-1}
\binom{r-m-n+j-1}{j-l}
=
\binom{r-m-n+j-1}{j-1}.
\end{eqnarray*}
\end{proof}

Let 
$$
\Delta_{ij}:=\bigoplus_{\cH\in\cA^{(n-i)}}
\bigoplus_{\cH'\in(\cA\setminus\cH)^{(i+j-n)}}
\Delta_{\cH\cup\cH'}\e_{\wedge\cH'}\e_{\cH}
$$
for $1\leq i\leq n,\, 0\leq j\leq n-1$ with $i+j\geq n$,
and
$$
\Delta_{in}:= E_i=\bigoplus_{\cH\in\cA^{(n-i)}}E_{[\cH]}.
$$
Then
$$
\Delta_{ij}=\bigoplus_{\cH\in \cA^{(n-i)}} C_{i+j-n}^{[\cH]},
\quad
\text{and hence}
\quad
\Delta_{i \bullet}=\bigoplus_{\cH\in \cA^{(n-i)}} C_{\bullet}^{[\cH]}(-(n-i)).
$$
As differentials of $\Delta_{i \bullet}$,
we take $(-1)^i$ times the differentials of 
$\bigoplus_{\cH\in \cA^{(n-i)}} C_{\bullet}^{[\cH]}(-(n-i))$.
We define a linear map $\phi(j)_i:\Delta_{ij}\to \Delta_{i-1 j}$ 
for $0\leq j\leq n-1$ by
$$
\Delta_{ij}\ni
\xi \e_{\wedge\cH'}\e_{\cH}
\mapsto
\sum_{H\in \cH'} (-1)^{l_{\cH'}(H)}\xi \e_{\wedge(\cH'\setminus\{ H\})}
\e_{\cH\cup\{ H\}}
\in
\Delta_{i-1 j}
$$
for $\cH\in \cA^{(n-i)}, \cH'\in (\cA\setminus\cH)^{(i+j-n)}$,
and $\xi\in \Delta_{\cH\cup\cH'}$.
We define
$
\psi_i:E_i\to
E_{i-1}
$
as the restriction of $\phi(n-1)_i$.

Then we have
the double complex $\Delta_{\bullet \bullet}$:
$$
\tiny
\begin{CD}
@. 0 @. 0 @. 0 @. @. 0 @. 0 @. \\
@. @VVV @VVV @VVV @. @VVV @VVV @.\\
0 @>>> E_n @>>> \Delta_{n, n-1} @>>> \Delta_{n, n-2}
@>>> \cdots @>>> \Delta_{n, 1} @>>> \Delta_{n, 0} @>>> 0\\
@. @V{\psi_n}VV @VVV @VVV @. @VVV @VVV @.\\
0 @>>> E_{n-1} @>>> \Delta_{n-1, n-1} @>>> \Delta_{n-1, n-2}
@>>> \cdots @>>> \Delta_{n-1, 1} @>>> 0 @. \\
@. @V{\psi_{n-1}}VV @VVV @VVV @. @VVV @. @.\\
 @. \vdots @. \vdots @. \vdots
@.  @. 0 @.  @. \\
@. @V{\psi_3}VV @VVV @VVV @. @. @. @.\\
0 @>>> E_{2} @>>> \Delta_{2, n-1} @>>> \Delta_{2, n-2}
@>>> 0 @.  @. @. \\
@. @V{\psi_2}VV @VVV @VVV @. @. @. @.\\
0 @>>> E_{1} @>>> \Delta_{1, n-1} @>>> 0
@. @.  @. @. \\
@. @VVV @VVV @. @. @. @. @.\\
 @. 0 @. \,\, 0. @.
@. @.  @. @. \\
\end{CD}
$$

We add 
$$
\psi_1:
E_1=
\bigoplus_{\cH\in\cA^{(n-1)}}
K\delta_\cH^m\e_\cH
\ni \delta_\cH^m\e_\cH
\mapsto
\delta_\cH^m\in
E_0:=\Delta_\emptyset
=\sum_{\cH\in\cA^{(n-1)}}
K\delta_\cH^m.
$$

\begin{lemma}[cf. Lemma 1.3 in \cite{Yuzvinsky}]
\label{Yuz:lemma1.3}
The sequence 
$$
E_\ast:
0\to E_n\to E_{n-1} \to \cdots
\to E_1\to E_0\to 0
$$ 
is exact.
\end{lemma}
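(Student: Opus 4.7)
The plan is to compute $H_\bullet(\mathrm{Tot}(\Delta_{\bullet\bullet}))$ in two ways, via the row and column spectral sequences of the double complex, and then compare the two abutments.

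First I would handle the rows. By the identification $\Delta_{i,\bullet} \cong \bigoplus_{\cH\in\cA^{(n-i)}} C_\bullet^{[\cH]}(-(n-i))$ and \lemref{Yuz:lemma1.1}, each row has homology concentrated at the top position $j = n-1$, equal to $E_i$. The induced $d_1$ on the row spectral sequence's $E^1$ page sends $\eta\in E_i\subseteq\Delta_{i,n-1}$ to $\phi(n-1)_i(\eta)$, which lies in $E_{i-1}$ by anti-commutativity and is by definition $\psi_i$. Since $E^1$ sits in a single column, no higher differential acts, and $E^\infty$ equals $H_\bullet(E_\bullet^{\ge 1},\psi)$ placed in column $j = n-1$.

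Second I would handle the columns. For $1\le j\le n-1$ and each $\cK\in\cA^{(j)}$, the $\cK$-summand of $\Delta_{\bullet, j}$ has basis $\{\e_{\wedge\cH'}\e_\cH : \cH\sqcup\cH' = \cK\}$; identifying $\cH\subseteq\cK$ with the face of the simplex on vertex set $\cK$ of dimension $|\cH|-1$, the vertical differential $\cH\mapsto\sum_{H\in\cK\setminus\cH}\pm(\cH\cup\{H\})$ is the simplicial coboundary. Thus each $\cK$-summand is $\Delta_\cK$ tensored with the augmented simplicial cochain complex of $\Delta^{j-1}$, which is acyclic since the simplex is non-empty and contractible. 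The remaining column $\Delta_{\bullet,0}$ has a single term $\Delta_{n,0}=E_0$, which, being isolated on the $E^1$ page, survives to $E^\infty$ at $(n,0)$.

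Matching abutments at each total degree yields $H_1(E_\bullet^{\ge 1})\cong E_0$ and $H_i(E_\bullet^{\ge 1}) = 0$ for $i\ge 2$ (in particular $\Ker\psi_n = 0$). To upgrade from this to exactness of the full $E_\bullet$, I would supply two direct observations: (a) $\psi_1$ is surjective since the $\delta_\cH^m$ span $E_0$, and (b) $\psi_1\psi_2 = 0$, via a short sign check. A dimension count then yields $\Ker\psi_1 = \Im\psi_2$, giving exactness at $E_0$ and $E_1$ as well. The main obstacle is the simplicial identification in step two: the signs $(-1)^{l_{\cH'}(H)}$ in $\phi(j)_i$ must be matched carefully with the simplicial coboundary signs on $\Delta^{j-1}$; everything else is standard spectral-sequence bookkeeping.
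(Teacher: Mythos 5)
Your argument is correct and follows essentially the same route as the paper: both compare the row and column spectral sequences of the double complex $\Delta_{\bullet\bullet}$, using that the rows are exact by Lemma \ref{Yuz:lemma1.1} applied to the restricted arrangements and that the columns with $1\leq j\leq n-1$ are augmented chain complexes of simplices tensored with $\Delta_{\cK}$, while the $0$-th column is concentrated at $(n,0)$ with value $E_0$. Your explicit handling of the edge identification (surjectivity of $\psi_1$, the check $\psi_1\psi_2=0$, and the dimension count) merely fills in what the paper compresses into ``the spectral sequence argument,'' and the sign matching you flag works out since $(-1)^{l_{\cH'}(H)}$ is exactly the standard simplicial boundary sign.
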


\begin{proof}
All rows of $\Delta_{\bullet \bullet}$ are exact by Lemma \ref{Yuz:lemma1.1}
and the argument in the paragraph just after the proof of 
Lemma \ref{Yuz:lemma1.1}.

For $1\leq j<n$, since we have
\begin{eqnarray*}
\Delta_{ij}
&=&
\bigoplus_{\cH\in\cA^{(n-i)}}
\bigoplus_{\cH'\in(\cA\setminus\cH)^{(i+j-n)}}
\Delta_{\cH\cup\cH'}\e_{\wedge\cH'}\e_{\cH}\\
&=&
\bigoplus_{\cH\in\cA^{(j)}}
\Delta_{\cH}
\otimes_K
(\bigoplus_{\cH'\in \cH^{(i+j-n)}}
K \e_{\wedge\cH'}\e_{\cH\setminus\cH'}),
\end{eqnarray*}
the $j$-th column $\Delta_{\bullet j}$ is the same as 
$\bigoplus_{\cH\in\cA^{(j)}} \Delta_{\cH}\otimes_K \tilde{S}(\cH)$,
where $\tilde{S}(\cH)$ is the augmented chain complex of the simplex with vertex set $\cH$:
$$
0\to K\e_{\wedge\cH}\to
\bigoplus_{B\in{\cH}^{(j-1)}}
K\e_{\wedge B}\to
\bigoplus_{B\in{\cH}^{(j-2)}}
K\e_{\wedge B}
\to\cdots\to
\bigoplus_{H\in{\cH}}
K\e_{H}\to K\e_\emptyset\to 0.
$$
Thus the $j$-th columns ($1\leq j\leq n-1$)
are exact.
The $0$-th column has the unique nonzero term $\Delta_\emptyset \e_\emptyset(=E_0)$
at $i=n$.
Hence by the spectral sequence argument we see that $E_\ast$ is exact.
\end{proof}

\bigskip
Let $\sigma\subseteq \{ 1,2,\ldots, r\}$ and $\sigma\neq\emptyset$.
Put
$$
\cL_j[\sigma]:=\{\, \cH\in \cA^{(n-j)}\,\, | \,\, 
\cH\cap\{ H_i\,|\, i\in \sigma\}\neq\emptyset\,\}.
$$
For $1\leq j\leq n$, 
$$
E_j[\sigma]:=
\bigoplus_{\cH\in \cL_j[\sigma]} E_{[\cH]}.
$$
Then
$$
E_n[\sigma]=0,
\qquad
E_1[\sigma]=\bigoplus_{\cH\in \cL_1[\sigma]}K\delta_{\cH}^m\e_\cH.
$$
We put
$$
E_0[\sigma]:=\sum_{\cH\in \cL_1[\sigma]}K\delta_{\cH}^m.
$$
We also put
$$
E_j[\emptyset]:=0.
$$
for all $j$.
Then
$\{ (E_\ast[\sigma], \psi_\ast[\sigma])\}$ is a subcomplex of
$\{ (E_\ast, \psi_\ast)\}$.

\begin{lemma}[cf. Lemma 1.4 in \cite{Yuzvinsky}]
\label{Yuz:lemma1.4}
For every $\sigma$ with $|\sigma|\leq n+m-1$,
$E_\ast[\sigma]$ is exact.
\end{lemma}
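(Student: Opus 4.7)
I would prove this by double induction, on the dimension $n$ outside and on $|\sigma|$ inside, in the spirit of Yuzvinsky's treatment of the $m=1$ case. For the outer base $n=2$, the hypothesis $|\sigma|\leq m+1$ combined with Lemma~\ref{m=r-n+1:DualBasis}(1) applied to any sub-arrangement of size $m+1$ containing $\{H_i\mid i\in\sigma\}$ shows that $\{\delta_{\{H_i\}}^m\mid i\in\sigma\}$ is linearly independent; hence the only nontrivial differential $\psi_1\colon E_1[\sigma]\to E_0[\sigma]$ is a bijection and exactness follows at once.

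For $n\geq 3$, I would proceed by induction on $|\sigma|$. The case $\sigma=\emptyset$ is trivial. For $\sigma=\{i_0\}$, the assignment $H\mapsto H\cap H_{i_0}$ induces a bijection between $\{\cH\in\cA^{(n-j)}\mid H_{i_0}\in\cH\}$ and $(\cA^{H_{i_0}})^{(n-1-j)}$, giving an isomorphism of complexes $E_\ast[\{i_0\}]\cong E_\ast(\cA^{H_{i_0}})$. The restricted arrangement $\cA^{H_{i_0}}$ is generic in $K^{n-1}$ with $r-1$ hyperplanes and still satisfies $m\leq (r-1)-(n-1)+1=r-n+1$, so Lemma~\ref{Yuz:lemma1.3} yields exactness. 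For $|\sigma|\geq 2$, choose $i\in\sigma$, set $\sigma':=\sigma\setminus\{i\}$, and consider the Mayer--Vietoris short exact sequence of chain complexes
$$0\to E_\ast[\sigma']\cap E_\ast[\{i\}]\to E_\ast[\sigma']\oplus E_\ast[\{i\}]\to E_\ast[\sigma]\to 0.$$
By the inner inductive hypothesis $E_\ast[\sigma']$ is exact, and $E_\ast[\{i\}]$ is exact by the previous sub-case. The crucial identification is $E_\ast[\sigma']\cap E_\ast[\{i\}]\cong E_\ast[\sigma'](\cA^{H_i})$, obtained from the same bijection $\cH\leftrightarrow\cH\setminus\{H_i\}$ restricted to $\cH$ meeting both $\{H_i\}$ and $\{H_k\mid k\in\sigma'\}$. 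Since $|\sigma'|\leq n+m-2=(n-1)+m-1$, the outer inductive hypothesis applied to $\cA^{H_i}$ and $\sigma'$ gives exactness of this intersection complex, and the long exact sequence in homology forces $E_\ast[\sigma]$ to be acyclic.

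The main technical check is to verify that the identification $E_\ast[\sigma']\cap E_\ast[\{i\}]\cong E_\ast[\sigma'](\cA^{H_i})$ is an isomorphism of chain complexes, not merely of graded vector spaces. At the level of summands this rests on the equality of restrictions $\cA^{[\cH]}=(\cA^{H_i})^{[\cH\setminus\{H_i\}]}$ for $H_i\in\cH$ (both equal the arrangement in $[\cH]$ consisting of the intersections $H\cap[\cH]$ for $H\in\cA\setminus\cH$), together with the resulting identification of the local pieces $E_{[\cH]}(\cA)=E_{[\cH\setminus\{H_i\}]}(\cA^{H_i})$. At the level of differentials, one has to check that shifting a hyperplane $H\in\cA\setminus\cH$ from the wedge index to the outer index in $\cA$ corresponds, under the bijection, to shifting its image $H\cap H_i$ in $\cA^{H_i}$, with matching signs $(-1)^{l_{\cH'}(H)}$ once the linear orderings on $\cA$ and $\cA^{H_i}$ are chosen compatibly.
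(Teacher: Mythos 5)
Your proposal is correct and follows essentially the same route as the paper: induction on $|\sigma|$ with the $n=2$ case settled by Lemma~\ref{m=r-n+1:DualBasis}, the identification $E_\ast[\{i\}]\cong E_\ast(\cA^{H_i})$ handled via Lemma~\ref{Yuz:lemma1.3}, and the Mayer--Vietoris sequence $0\to E_\ast[\tau]\cap E_\ast[\{i\}]\to E_\ast[\tau]\oplus E_\ast[\{i\}]\to E_\ast[\sigma]\to 0$ together with $E_\ast[\tau]\cap E_\ast[\{i\}]=E_\ast[\tau](\cA^{H_i})$ and the bound $|\tau|\leq (n-1)+m-1$. Your explicit flagging of the chain-map compatibility of the identification is a detail the paper leaves implicit, but the argument is the same.
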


\begin{proof}
We prove the assertion by induction on $|\sigma|$.
If $|\sigma|=0$, then the assertion is trivial.

When $n=2$,
we have
$$
0\to
E_1[\sigma]=\bigoplus_{H\in \cL_1[\sigma]}K\delta^m_H\e_H
=\bigoplus_{H\in \sigma}K\delta^m_H\e_H
\to
E_0[\sigma]=\sum_{H\in \cL_1[\sigma]}K\delta^m_H
=\sum_{H\in \sigma}K\delta^m_H
\to 0.
$$
This is an isomorphism, since $|\sigma|\leq 2+m-1$
(see Lemma \ref{m=r-n+1:DualBasis}).

Now assume that $|\sigma|\geq 1$ and $n\geq 3$.
Fix $j\in \sigma$ and put $\tau:=\sigma\setminus\{ j\}$.
Then $E_\ast[\tau]$ and $E_\ast[\{ j\}]=E_\ast(\cA^{H_j})$ 
(by \eqref{eqn:IdentifyWithA''})
are subcomplexes of $E_\ast[\sigma]$,
which are exact by the induction hypothesis and Lemma \ref{Yuz:lemma1.3}.
Moreover there exists an exact sequence of complexes:
$$
0\to
E_\ast[\tau]\cap E_\ast[\{ j\}]
\to
E_\ast[\tau]\oplus E_\ast[\{ j\}]
\to
E_\ast[\sigma]
\to
0.
$$
Since
$E_\ast[\tau]\cap E_\ast[\{ j\}]=E_\ast[\tau](\cA^{H_j})$
and $|\tau|\leq (n-1)+m-1$, we are done.
\end{proof}

\bigskip

Put
$$
\sigma_0:=\{ 1, 2,\ldots, m\},
$$
and
$$
\bar{E}_\ast:=E_\ast[\sigma_0].
$$
We use notation
$$
\psi_j: \bar{E}_j\to \bar{E}_{j-1}\qquad
(j=1,2,\ldots, n-1).
$$

Put
$$
F_j:=S\otimes\bar{E}_j \qquad (j=0,1,\ldots, n-1).
$$
Note that
$F_i$ is a submodule of 
$$
S\Delta_{i, n-1}[\sigma_0]
=
\bigoplus_{\cH\in\cL_i[\sigma_0]}
\bigoplus_{\cH'\in (\cA\setminus\cH)^{(i-1)}}
S \delta_{\cH\cup\cH'}^m \e_{\wedge\cH'}\e_\cH.
$$
For $i\geq 2$, the morphism $d_i: F_i\to F_{i-1}$
is defined by
$$
\e_{\wedge\cH'}\e_\cH
\mapsto
\sum_{H'\in\cH'}
(-1)^{l_{\cH'}(H')}
p_{H'}
\e_{\wedge(\cH'\setminus\{ H'\})}
\e_{\cH\cup \{ H'\}}.
$$
Note that
$$
F_0=S\otimes_K\sum_{\cH\in\cL_1[\sigma_0]}K\delta_\cH^m,
$$
and
$$
F_1=\bigoplus_{\cH\in\cL_1[\sigma_0]}S\delta_\cH^m
\e_\cH.
$$
We define a morphism
$d_1:F_1\to F_0$
by
$$
\delta_\cH^m
\e_\cH\mapsto
P_\cH\delta_\cH^m.
$$

\begin{lemma}
The sequence 
$$
0\to
F_{n-1}\overset{d_{n-1}}{\to}
F_{n-2}\overset{d_{n-2}}{\to}
\cdots
\overset{d_{2}}{\to}
F_{1}\overset{d_{1}}{\to}
F_0\to 0
$$ 
is a complex.
\end{lemma}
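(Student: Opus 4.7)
I would verify $d_{i-1}\circ d_i=0$ in two cases, according to whether the outer differential is one of the Koszul-type maps ($i\geq 3$) or the special map $d_1$ ($i=2$).

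For $i\geq 3$ both $d_i$ and $d_{i-1}$ are given by the same Koszul-style formula: they multiply by $p_{H'}$ and introduce the sign $(-1)^{l_{\cH'}(H')}$, while leaving the coefficient in $\Delta_{\cH\cup\cH'}$ intact. Expanding $d_{i-1}d_i$ on a basis element $\xi\,\e_{\wedge\cH'}\e_\cH$ produces a double sum over ordered pairs of distinct $H',H''\in\cH'$. Grouping contributions by the unordered pair $\{H',H''\}$ and taking $H'\prec H''$, the ``remove $H'$ then $H''$'' ordering carries the sign $(-1)^{l_{\cH'}(H')+l_{\cH'\setminus\{H'\}}(H'')}$, while the ``remove $H''$ then $H'$'' ordering carries $(-1)^{l_{\cH'}(H'')+l_{\cH'\setminus\{H''\}}(H')}$. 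The identities $l_{\cH'\setminus\{H'\}}(H'')=l_{\cH'}(H'')-1$ and $l_{\cH'\setminus\{H''\}}(H')=l_{\cH'}(H')$ force these two signs to differ by $-1$, so each pair cancels. This is the standard Koszul sign cancellation.

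For $i=2$ the Koszul argument is unavailable, since $d_1$ is defined by multiplication by $P_\cH$ rather than by a Koszul-type formula. Instead one invokes the kernel relation built into $E_{[\cH]}$. A direct computation on a generator gives
\[
d_1d_2\bigl(\delta_{\cH\cup\{H'\}}^m\,\e_{\{H'\}}\e_\cH\bigr)
=p_{H'}P_{\cH\cup\{H'\}}\,\delta_{\cH\cup\{H'\}}^m,
\]
and the crucial identity $p_{H'}P_{\cH\cup\{H'\}}=\prod_{H\notin\cH}p_H$ shows that this scalar factor depends only on $\cH$, not on $H'$. Hence, for any element $\eta_\cH=\sum_{H'\in\cA\setminus\cH}f_{H'}\,\delta_{\cH\cup\{H'\}}^m\,\e_{\{H'\}}\e_\cH$ lying in the $\cH$-summand of $F_2=S\otimes E_2[\sigma_0]$, we obtain
\[
d_1d_2(\eta_\cH)=\Bigl(\prod_{H\notin\cH}p_H\Bigr)\sum_{H'}f_{H'}\,\delta_{\cH\cup\{H'\}}^m=0,
\]
because $\sum_{H'}f_{H'}\delta_{\cH\cup\{H'\}}^m=0$ in $S\otimes\Delta_\cH$ is precisely the condition that $\eta_\cH$ belong to $S\otimes\Ker(\partial_1^{[\cH]})=S\otimes E_{[\cH]}$.

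The principal subtlety is therefore the $i=2$ case, where the sign-cancellation argument alone does not suffice and one must appeal to the defining kernel relation of $E_{[\cH]}$. The argument succeeds only because of the fortunate coincidence that $p_{H'}P_{\cH\cup\{H'\}}$ is a factor common to all $H'\in\cA\setminus\cH$, which allows the kernel relation to be factored out as a global multiplier; everything else reduces to routine Koszul sign bookkeeping.
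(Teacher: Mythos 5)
Your proposal is correct and follows essentially the same route as the paper: the cases $i\geq 3$ are the standard Koszul sign cancellation (which the paper dismisses as clear by definition), and the case $d_1\circ d_2=0$ rests on exactly the paper's two ingredients, namely the identity $p_{H'}P_{\cH\cup\{H'\}}=\prod_{H\notin\cH}p_H$ depending only on $\cH$ and the kernel relation $\sum_{H'}f_{H'}\delta^m_{\cH\cup\{H'\}}=0$ defining $E_{[\cH]}$.
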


\begin{proof}
By the definition of $d_i$, clearly $d_{i}\circ d_{i+1}=0$ for $i\geq 2$.
We prove $d_1\circ d_2=0$.

Let $X=\sum_{\cH\in \cL_2[\sigma_0]}\sum_{H\notin\cH}
f_{\cH, H}\delta_{\cH\cup\{ H\}}^m \e_{\wedge H}\e_\cH\in F_2$.
Then
\begin{equation*}
\label{XinF_2}
\sum_{H\notin\cH}
f_{\cH, H}\delta_{\cH\cup\{ H\}}^m=0
\quad
\text{for all $\cH\in \cL_2[\sigma_0]$.}
\end{equation*}
We have
\begin{eqnarray*}
d_1\circ d_2(X)
&=&
d_1(\sum_{\cH\in \cL_2[\sigma_0]}\sum_{H\notin\cH}
f_{\cH, H}p_H\delta_{\cH\cup\{ H\}}^m \e_{\cH\cup\{ H\}})\\
&=&
\sum_{\cH\in \cL_2[\sigma_0]}\sum_{H\notin\cH}
f_{\cH, H}p_HP_{\cH\cup\{ H\}}\delta_{\cH\cup\{ H\}}^m\\
&=&
\sum_{\cH\in \cL_2[\sigma_0]}\sum_{H\notin\cH}
f_{\cH, H}P_{\cH}\delta_{\cH\cup\{ H\}}^m\qquad 
(\text{Here $P_{\cH}:=\prod_{H\notin \cH}p_H$.})\\
&=&
\sum_{\cH\in \cL_2[\sigma_0]}P_{\cH}\sum_{H\notin\cH}
f_{\cH, H}\delta_{\cH\cup\{ H\}}^m=0.
\end{eqnarray*}
\end{proof}

The following is Theorem \ref{theorem10.3}.

\begin{lemma}[cf. Lemma 2.1 in \cite{Yuzvinsky}]
Assume that $m< r-n+1$.
Then the image of $d_1$ coincides with $\Xi^{(m)}(\cA)$.
\end{lemma}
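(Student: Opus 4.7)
The plan is to deduce this lemma directly from Theorem \ref{theorem10.3}. The first step is to identify $F_0 = S\otimes_K \bar{E}_0$ with a concrete submodule of $D^{(m)}(S)$. Since $\bar{E}_0 = \sum_{\cH\in\cL_1[\sigma_0]} K\delta_\cH^m$ is a $K$-subspace of the constant-coefficient piece $\sum_{|\aalpha|=m}K\partial^\aalpha$, comparing coefficients of each $\partial^\aalpha$ shows that elements of $\bar{E}_0$ which are $K$-linearly independent remain $S$-linearly independent in $D^{(m)}(S)$. Hence the multiplication map $S\otimes_K\bar{E}_0 \to D^{(m)}(S)$, $f\otimes v\mapsto fv$, is injective, and we may identify $F_0$ with $\sum_{\cH\in\cL_1[\sigma_0]}S\delta_\cH^m \subseteq D^{(m)}(S)$.

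The second step is essentially bookkeeping. By definition $d_1(\delta_\cH^m\e_\cH) = P_\cH\delta_\cH^m$ for each $\cH\in\cL_1[\sigma_0]$, so under the identification above
$$
\Im(d_1) = \sum_{\cH\in\cL_1[\sigma_0]} SP_\cH\delta_\cH^m = \sum_{\substack{\cH\in\cA^{(n-1)} \\ \cH\cap\{H_1,\ldots,H_m\}\neq\emptyset}} SP_\cH\delta_\cH^m.
$$
By the second equality in Theorem \ref{theorem10.3}, valid precisely under the hypothesis $m<r-n+1$, this right-hand side equals $\Xi^{(m)}(\cA)$, which closes the argument.

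Since the lemma is essentially a translation of the generator description of $\Xi^{(m)}(\cA)$ from Theorem \ref{theorem10.3} into the notation of the present section, no substantial obstacle arises. The only point requiring care is the injectivity of the natural map $F_0 \to D^{(m)}(S)$, which is what allows us to speak unambiguously of ``the image of $d_1$'' as a submodule of $D^{(m)}(\cA)$; this is a routine linear algebra check and is dispatched in the first step above.
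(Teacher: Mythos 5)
Your proposal is correct and takes essentially the same route as the paper: the paper gives no separate argument, simply noting that the lemma \emph{is} Theorem~\ref{theorem10.3} restated in the notation of the complex $F_*$, which is precisely the reduction you carry out. The only content you add is the routine check that $S\otimes_K\bar{E}_0\to D^{(m)}(S)$ is injective, a detail the paper leaves implicit.
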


By Remark \ref{remark:Yuz1.2},
we have the following.

\begin{remark}[cf. Remark 2.2 in \cite{Yuzvinsky}]
\label{remark2.2}
$$
\rank_S(F_j)=
\binom{r-m-n+j-1}{j-1}
\left(
\binom{r}{n-j}
-
\binom{r-m}{n-j}
\right)=:w_j^{(m)}.
$$
\end{remark}

Under the above preparations, we can prove the following theorem.
Since the proof is almost the same as that of \cite[Theorem 2.3]{Yuzvinsky},
we omit it.

\begin{theorem}[cf. Theorem 2.3 in \cite{Yuzvinsky}]
\label{thm:MinFreeRes}
Assume that $n\geq 3$ and $m< r-n+1$.
Then the complex 
$$
F_*:
0\to
F_{n-1}\overset{d_{n-1}}{\to}
F_{n-2}\overset{d_{n-2}}{\to}
\cdots
\overset{d_{2}}{\to}
F_{1}\overset{d_{1}}{\to}
\Xi^{(m)}(\cA)\to 0
$$ 
is a minimal free resolution of
$\Xi^{(m)}(\cA)$.
In particular,
the projective dimensions of $S$-modules
$\Xi^{(m)}(\cA)$ and $D^{(m)}(\cA)$ are equal to $n-2$.
\end{theorem}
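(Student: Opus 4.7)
The plan is to follow the proof of \cite[Theorem 2.3]{Yuzvinsky}, adapted to the higher-order setting. Four items must be verified: that $F_*$ is a complex (already done), that $d_1$ surjects onto $\Xi^{(m)}(\cA)$ (Theorem \ref{theorem10.3}), exactness of $F_*$ in positive degrees, and minimality and the projective-dimension claim.

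Minimality is immediate. Every entry of $d_i$ for $i \geq 2$ is $\pm p_{H'}$, a linear form, while every entry of $d_1$ is $P_\cH$ of degree $r-n+1 \geq 1$. All entries therefore lie in the graded maximal ideal $\mathfrak{m} = (x_1,\ldots,x_n)$, so once $F_*$ is known to be a resolution it is automatically minimal.

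For exactness, I would run a deletion-restriction induction on $r$, mirroring the end of the proof of Lemma \ref{Yuz:lemma1.1} at the $S$-module level. Pick $H_r \in \cA \setminus \{H_1,\ldots,H_m\}$ (possible since $r > m+n-1 \geq m$), and set $\cA' := \cA \setminus \{H_r\}$ and $\cA'' := \cA^{H_r}$; both are generic with $r-1$ hyperplanes, and $\cA''$ inherits the distinguished set $\{H_1 \cap H_r, \ldots, H_m \cap H_r\}$. The key is an analogous short exact sequence of complexes
$$0 \to F_*(\cA') \to F_*(\cA) \to F_*(\cA'')(-1) \to 0,$$
splitting summands of $F_*(\cA)$ according to whether $H_r \in \cH \cup \cH'$ and using the identification \eqref{eqn:IdentifyWithA''} for the ``contains $H_r$'' part, with the $(-1)$ shift encoding the factor $p_{H_r}$ that gets stripped off. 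The induction hypothesis provides acyclicity of $F_*(\cA'')$; $F_*(\cA')$ is covered by induction when $r-1 > m+n-1$, and at the boundary $r = m+n$, $\cA'$ falls under Proposition \ref{m=r-n+1:Basis} so $F_*(\cA')$ is handled directly. The long exact sequence in homology then yields acyclicity of $F_*(\cA)$ in positive degrees. (Alternatively, one could adapt the double-complex argument of Lemma \ref{Yuz:lemma1.3} to the $S$-module level, invoking Lemma \ref{Yuz:lemma1.4} with $\sigma = \sigma_0$ as the $K$-exact input.)

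The main obstacle is the bookkeeping for this short exact sequence: verifying that the summands of $F_*(\cA)$ indexed by pairs $(\cH,\cH')$ with $H_r \in \cH \cup \cH'$ correspond bijectively (up to the grading shift by $p_{H_r}$) to summands of $F_*(\cA'')$ with the correct $\sigma_0$-condition, and that the twisted differentials $d_i$ intertwine with the restriction, so that the quotient is indeed $F_*(\cA'')(-1)$ viewed as an $S$-module complex via $S \twoheadrightarrow S/\langle p_{H_r}\rangle$. Finally, for the projective dimension, $F_*$ is a minimal free resolution of length $n-2$ whose top term $F_{n-1}$ has rank $w_{n-1}^{(m)} = m\binom{r-m-2}{n-2} > 0$ by Remark \ref{remark2.2} (using $r \geq m+n$), so $\mathrm{pd}_S \Xi^{(m)}(\cA) = n-2$. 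Since Theorem \ref{theorem10.3} gives $D^{(m)}(\cA) = \Xi^{(m)}(\cA) \oplus S\epsilon_m$ with $S\epsilon_m \cong S$ free, $\mathrm{pd}_S D^{(m)}(\cA) = n-2$ as well.
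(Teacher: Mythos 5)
Your handling of minimality, of the identification of $\Im\,d_1$ with $\Xi^{(m)}(\cA)$ via Theorem \ref{theorem10.3}, and of the projective--dimension count (including the positivity of $w_{n-1}^{(m)}$ and the splitting $D^{(m)}(\cA)=\Xi^{(m)}(\cA)\oplus S\epsilon_m$) is correct. The gap is the exactness argument, which is the substance of the theorem. The omitted proof is Yuzvinsky's argument transplanted, and its mechanism is not a deletion--restriction induction on $F_*$: one checks exactness on the punctured spectrum by localizing at a point $x\neq 0$, where every $p_{H}$ with $p_{H}(x)\neq 0$ becomes a unit and the complex contracts to one governed by $E_*[\sigma]$ for $\sigma=\sigma_0\cup\{i\,:\,p_{H_i}(x)=0\}$; genericity gives at most $n-1$ hyperplanes through $x\neq 0$, so $|\sigma|\leq n+m-1$ and Lemma \ref{Yuz:lemma1.4} applies. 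Since $0\to F_{n-1}\to\cdots\to F_1\to 0$ is a free complex of length at most $n-1<n=\depth_{\fkm}S$, exactness at the positions $\geq 2$ then follows from the acyclicity lemma. This is the reason Lemma \ref{Yuz:lemma1.4} is proved for \emph{all} $\sigma$ with $|\sigma|\leq n+m-1$ and not just for $\sigma_0$; your parenthetical alternative, which invokes it only with $\sigma=\sigma_0$, discards exactly the input that makes the local computation work.

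Your primary route founders on the short exact sequence $0\to F_*(\cA')\to F_*(\cA)\to F_*(\cA'')(-1)\to 0$, and the problems are more than bookkeeping. First, $F_j(\cA')$ is not the subobject of $F_j(\cA)$ spanned by the index pairs $(\cH,\cH')$ avoiding $H_r$: the fibre $E_{[\cH]}$ itself depends on $r$ (Remark \ref{remark:Yuz1.2} gives $\dim E_{[\cH]}=\binom{r-m-n+j-1}{j-1}$), so even for fixed $\cH\subseteq\cA'$ one has $E_{[\cH]}(\cA')\subsetneq E_{[\cH]}(\cA)$, and the putative quotient picks up these excesses in addition to the summands with $H_r\in\cH\cup\cH'$. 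Second, in Lemma \ref{Yuz:lemma1.1} the symbol $(-1)$ is a homological shift, not the internal degree twist ``encoding the factor $p_{H_r}$'' that you describe; you do not say where multiplication by $p_{H_r}$ enters the chain maps, though it must somewhere in order to reconcile $P_\cH$ with $P_\cH/p_{H_r}$ under $d_1$. Third, closing the resulting long exact sequence at the bottom requires the induced maps on $H_0$, i.e.\ a deletion--restriction sequence for $\Xi^{(m)}$ itself, including surjectivity of the restriction $\Xi^{(m)}(\cA)\to\Xi^{(m)}(\cA^{H_r})$, which is nontrivial and is neither stated nor proved. As it stands, the exactness of $F_*$ in positive degrees is asserted rather than established.
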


By Theorem \ref{theorem10.3}, Remark \ref{remark2.2},
and the construction of the complex $F_*$ in Theorem \ref{thm:MinFreeRes},
we have the following corollary:

\begin{corollary}[cf. Corollary 4.4.3 in \cite{Rose-Terao}]
\label{Rose-Terao:Cor4.4.3}
Assume that $n\geq 3$ and $m< r-n+1$.
Then there exist exact sequences
\begin{eqnarray*}
0 &\to& S(m+1-r)^{w_{n-1}^{(m)}} \to \cdots \to S(m+n-j-r)^{w_j^{(m)}}\to\cdots\\
&\to& S(m+n-2-r)^{w_2^{(m)}} \to S(m+n-1-r)^{w_1^{(m)}}\to \Xi^{(m)}(\cA)\to 0,\\
0 &\to& S(m+1-r)^{w_{n-1}^{(m)}} \to \cdots \to S(m+n-j-r)^{w_j^{(m)}}\to\cdots\\
&\to& S(m+n-2-r)^{w_2^{(m)}} \to S(m+n-1-r)^{w_1^{(m)}}\bigoplus S \to D^{(m)}(\cA)\to 0,
\end{eqnarray*}
where $w_j^{(m)}$ were defined in Remark \ref{remark2.2}, and all maps are homogeneous of degree $0$.

In particular,
the Castelnuovo-Mumford regularities of $\Xi^{(m)}(\cA)$ and $D^{(m)}(\cA)$ are equal to $r-m-n+1$.
\end{corollary}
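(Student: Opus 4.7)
The corollary is a consequence of \thmref{thm:MinFreeRes}, Remark~\ref{remark2.2}, and \thmref{theorem10.3} together with a careful tracking of the $\Z$-grading on $D(S)$ (recall $\deg(x^\aalpha\partial^\bbeta)=|\aalpha|-|\bbeta|$). My plan is first to pin down the graded shifts in the minimal free resolution of $\Xi^{(m)}(\cA)$ provided by \thmref{thm:MinFreeRes}, then to augment that resolution by a free summand accounting for $S\epsilon_m$ to obtain one for $D^{(m)}(\cA)$, and finally to read off the regularity from the shifts.

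For the shifts, I would start by noting that the generator $\delta_\cH^m\e_\cH$ of the $\cH$-summand of $F_1$ is sent by $d_1$ to $P_\cH\delta_\cH^m$, which has $D(S)$-degree $(r-n+1)-m=r-m-n+1$ (since $\deg P_\cH=r-n+1$ and $\deg\delta_\cH^m=-m$). For $d_1$ to be homogeneous of degree $0$, the corresponding summand must be $S(-(r-m-n+1))=S(m+n-1-r)$; combined with Remark~\ref{remark2.2} this gives $F_1\cong S(m+n-1-r)^{w_1^{(m)}}$. For $j\geq 2$, the differential $d_j$ multiplies each generator by a linear form $p_{H'}\in S_1$, so the generators of $F_j$ must sit one degree higher than those of $F_{j-1}$; by induction $F_j\cong S(m+n-j-r)^{w_j^{(m)}}$, and unwinding \thmref{thm:MinFreeRes} yields the first exact sequence.

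For $D^{(m)}(\cA)$, I would invoke the decomposition $D^{(m)}(\cA)=\Xi^{(m)}(\cA)\oplus S\epsilon_m$ from \thmref{theorem10.3}. Since $\epsilon_m=\sum_{|\aalpha|=m}\frac{m!}{\aalpha!}x^\aalpha\partial^\aalpha$ has $D(S)$-degree $0$, the summand $S\epsilon_m\cong S$ as a graded module, so I would add a new $S$ summand to the rightmost free module $F_1$ and send its generator to $\epsilon_m$. Minimality of the augmented complex is preserved because $\epsilon_m$ together with the $P_\cH\delta_\cH^m$ form a minimal generating set for $D^{(m)}(\cA)$ by \thmref{theorem10.3}, and the new $S$ summand contributes trivially to the next differential.

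Finally, I would compute the Castelnuovo-Mumford regularity via $\mathrm{reg}(M)=\max_{i,j}(q_{i,j}-i)$ applied to the minimal free resolution $P_i=\bigoplus_j S(-q_{i,j})$. For $\Xi^{(m)}(\cA)$ the shifts in $P_{j-1}=F_j$ all equal $r+j-m-n$, contributing $(r+j-m-n)-(j-1)=r-m-n+1$ independently of $j$; for $D^{(m)}(\cA)$ the extra $S$ at $P_0$ contributes $0-0=0$, which is strictly smaller than $r-m-n+1$ since $m<r-n+1$. The main obstacle is simply fixing the grading convention and verifying that the differentials of \thmref{thm:MinFreeRes} are homogeneous of degree $0$ under these shifts; once that bookkeeping is done, the rest is immediate.
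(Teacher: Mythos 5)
Your proposal is correct and follows essentially the same route the paper intends: the paper derives this corollary directly from Theorem~\ref{theorem10.3}, Remark~\ref{remark2.2}, and the construction of $F_*$ in Theorem~\ref{thm:MinFreeRes}, and your degree bookkeeping (generators of $F_1$ in degree $r-m-n+1$ since $\pdeg P_\cH-m=r-n+1-m$, each subsequent $d_j$ multiplying by linear forms, and the extra free summand $S\epsilon_m$ in degree $0$) is exactly the verification left implicit there. The regularity computation via $\max_j(q_j-j)$, constant at $r-m-n+1$ across the resolution and unaffected by the degree-$0$ summand because $m<r-n+1$, is also correct.
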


\begin{remark}
If we use the polynomial degrees in $\Xi^{(m)}(\cA)$ and $D^{(m)}(\cA)$
as the degrees of graded $S$-modules, then
the degrees are shifted by $m$.
Then the Castelnuovo-Mumford regularities of $\Xi^{(m)}(\cA)$ and $D^{(m)}(\cA)$ 
are equal to $r-n+1$ as stated for $D^{(1)}(\cA)$ in 
\cite[Section 5.2]{Derken-Sidman}, and
the Poicar\'e-Betti series of $\Xi^{(m)}(\cA)$ and $D^{(m)}(\cA)$
coincide with the ones 
conjectured by Snellman \cite[Conjecture 6.8]{Snellman}.
\end{remark}

\section{Minimal free resolution of $J_m(\cA)$}
\label{Jacobian}

In this section, we generalize the minimal free resolution of $S/J$
given in \cite{Rose-Terao}, where
$J$ is the Jacobian
ideal of $Q$.
We retain the assumptions $n\geq 3$ and $n+m-1< r$.

Let $J_m(\cA)$ denote the $S$-submodule of 
$S^{\binom{n+m-1}{m-1}}=\bigoplus_{|\bbeta|\leq m-1}S\e_\bbeta$
generated by all 
\begin{equation}
\label{ActionBullet}
\frac{1}{\aalpha !}\partial^\aalpha\bullet Q:=
(\frac{1}{(\aalpha-\bbeta)!} \partial^{\aalpha-\bbeta}* Q\, :\, |\bbeta|\leq m-1)
=
\sum_{|\bbeta|\leq m-1}\frac{1}{(\aalpha-\bbeta)!} \partial^{\aalpha-\bbeta}* Q
\e_\bbeta
\end{equation}
with $1\leq |\aalpha|\leq m$.
Here we agree $\partial^{\aalpha-\bbeta}=0$
for $\bbeta\not\leq\aalpha$.

\begin{example}
Let $m=1$.
Then
$J_1(\cA)$ is the $S$-submodule of $S$
generated by
$\partial_j *Q$ ($j=1,\ldots,n$),
i.e.,
$
J_1(\cA)
$
is nothing but the Jacobian ideal $J$ of $Q$.
\end{example}

\begin{lemma}
\label{lemma:partial=ad}
For all $\aalpha, \bbeta\in \N^n$,
$$
\frac{1}{(\aalpha-\bbeta)!}
\partial^{\aalpha-\bbeta}
=(-1)^{|\bbeta|}\frac{({\rm ad} x)^\bbeta}{\aalpha !}(\partial^\aalpha).
$$
Here we denote by ${\rm ad} x_i$ the endomorphism of $D(S)$:
$D(S)\ni P\mapsto {\rm ad} x_i(P)=[ x_i, P]\in D(S)$.
For $\bbeta=(\beta_1,\ldots, \beta_n)\in \N^n$,
we set $({\rm ad} x)^\bbeta=({\rm ad} x_1)^{\beta_1}\circ
\cdots\circ({\rm ad} x_n)^{\beta_n}$.
\end{lemma}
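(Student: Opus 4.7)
The plan is to reduce everything to the single basic commutator identity in the Weyl algebra,
$$
[x_i,\partial_j]=-\delta_{ij},
$$
and then iterate.

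First, I would compute $\mathrm{ad}\,x_i(\partial^\aalpha)=[x_i,\partial^\aalpha]$. Using the Leibniz-type rule for the commutator together with $[x_i,\partial_j]=-\delta_{ij}$, one gets
$$
\mathrm{ad}\,x_i(\partial^\aalpha)=-\alpha_i\,\partial^{\aalpha-e_i},
$$
where $e_i$ is the $i$th standard basis vector (and the right-hand side is $0$ if $\alpha_i=0$). This is just the observation that commuting $x_i$ past $\partial_i$ once produces a $-1$, and there are $\alpha_i$ copies of $\partial_i$ inside $\partial^\aalpha$ to commute with.

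Next, since $[\mathrm{ad}\,x_i,\mathrm{ad}\,x_j]=\mathrm{ad}[x_i,x_j]=0$, the operators $\mathrm{ad}\,x_i$ pairwise commute on $D(S)$, so $(\mathrm{ad}\,x)^\bbeta$ is well defined independently of the order. Iterating the previous step $\beta_i$ times in each coordinate yields, provided $\bbeta\leq\aalpha$,
$$
(\mathrm{ad}\,x)^\bbeta(\partial^\aalpha)=(-1)^{|\bbeta|}\,\frac{\aalpha!}{(\aalpha-\bbeta)!}\,\partial^{\aalpha-\bbeta},
$$
and $0$ if $\bbeta\not\leq\aalpha$, which matches the convention $\partial^{\aalpha-\bbeta}=0$ in that case. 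Dividing by $\aalpha!$ and multiplying by $(-1)^{|\bbeta|}$ gives exactly the claimed formula.

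The only real step is the one-variable identity $\mathrm{ad}\,x_i(\partial^\aalpha)=-\alpha_i\,\partial^{\aalpha-e_i}$; everything else is bookkeeping of factorials. I do not anticipate a substantive obstacle—the lemma is essentially a restatement of the Weyl algebra commutation relations in multi-index notation, recorded here so that in the proof of \thmref{thm:TransposeOfJet} one can rewrite the action \eqref{ActionBullet} of $\tfrac{1}{\aalpha!}\partial^\aalpha$ on $Q$ in terms of iterated commutators with the coordinate functions.
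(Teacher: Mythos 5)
Your proposal is correct and follows essentially the same route as the paper: the paper's proof is an induction on $|\bbeta|$ whose inductive step is exactly your one-variable identity ${\rm ad}\, x_i(\partial^{\ggamma})=-\gamma_i\,\partial^{\ggamma-\1_i}$, and your iterated form $({\rm ad}\, x)^{\bbeta}(\partial^{\aalpha})=(-1)^{|\bbeta|}\frac{\aalpha!}{(\aalpha-\bbeta)!}\partial^{\aalpha-\bbeta}$ is just the closed form of that induction. The only cosmetic difference is that you track the falling factorials explicitly and note the commutativity of the ${\rm ad}\, x_i$ (which the paper sidesteps by fixing the order in the definition of $({\rm ad}\, x)^{\bbeta}$); no substantive gap.
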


\begin{proof}
We prove the assertion by induction on $|\bbeta|$.
For all $\aalpha, \bbeta\in \N^n$, 
\begin{eqnarray*}
{\rm ad} x_i(
(-1)^{|\bbeta|}\frac{({\rm ad} x)^\bbeta}{\aalpha !}(\partial^\aalpha))
&=&
\frac{1}{(\aalpha-\bbeta)!}
{\rm ad} x_i(
\partial^{\aalpha-\bbeta})\\
&=&
-\frac{1}{(\aalpha-\bbeta)!}
(\alpha_i-\beta_i)
\partial^{\aalpha-\bbeta-\1_i}\\
&=&
-\frac{1}{(\aalpha-\bbeta-\1_i)!}
\partial^{\aalpha-\bbeta-\1_i}.
\end{eqnarray*}
\end{proof}

By Lemma \ref{lemma:partial=ad},
$$
\frac{1}{\aalpha !}\partial^\aalpha\bullet Q
=
((-1)^{|\bbeta|}({\rm ad} x)^\bbeta (\frac{1}{\aalpha !}\partial^\aalpha)* Q
\, :\, |\bbeta|\leq m-1).
$$
We define an $S$-module morphism 
$$
\delta_0: F_0^{[1,m]}:=D^{[1,m]}(S):=
\bigoplus_{k=1}^m D^{(k)}(S)\to S^{\binom{n+m-1}{m-1}}=\bigoplus_{|\bbeta|\leq m-1}S\e_\bbeta
$$
by
\begin{equation}
\label{def:delta0}
\delta_0(\theta)
:=
\theta\bullet Q
:=
((-1)^{|\bbeta|}({\rm ad} x)^\bbeta (\theta)* Q
\, :\, |\bbeta|\leq m-1).
\end{equation}

By definition,
\begin{equation}
\label{eqn:Im(delta_0)}
\Im\, \delta_0 = J_m(\cA).
\end{equation}

\begin{lemma}
\label{lemma:ThetaXbbeta}
Let $\theta\in D(S)$. Then
$$
\theta x^\bbeta
=
\sum_{\ggamma\leq \bbeta}(-1)^{|\ggamma|}
\binom{\bbeta}{\ggamma} x^{\bbeta-\ggamma}({\rm ad} x)^\ggamma (\theta),
$$
where
$\binom{\bbeta}{\ggamma}=\prod_{i=1}^n \binom{\beta_i}{\gamma_i}$.
\end{lemma}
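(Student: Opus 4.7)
The plan is to prove the formula by induction on $|\bbeta|$, reducing everything to the elementary commutator $\theta x_i = x_i\theta - ({\rm ad}\, x_i)(\theta)$ together with the Pascal-type identity $\binom{\bbeta}{\ggamma} + \binom{\bbeta}{\ggamma-\1_i} = \binom{\bbeta+\1_i}{\ggamma}$ for multi-indices differing only in the $i$-th coordinate. The base case $\bbeta=\0$ is immediate since only $\ggamma=\0$ contributes and both sides equal $\theta$.

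For the inductive step, I would assume the identity for $\bbeta$ and multiply it on the right by $x_i$. For each summand I rewrite $({\rm ad}\, x)^\ggamma(\theta)\, x_i = x_i ({\rm ad}\, x)^\ggamma(\theta) - ({\rm ad}\, x)^{\ggamma+\1_i}(\theta)$; this is legitimate because the endomorphisms ${\rm ad}\, x_j$ of $D(S)$ mutually commute (a consequence of $[x_j,x_k]=0$), so the ordered composition in $({\rm ad}\, x)^\ggamma$ is actually independent of order and we may freely absorb a new ${\rm ad}\, x_i$ factor into the multi-index exponent. After a reindexing $\ggamma\mapsto\ggamma-\1_i$ in the commutator contributions and combining them with the shifted contributions, Pascal's identity collapses the two into a single sum over $\ggamma\leq\bbeta+\1_i$ with coefficient $(-1)^{|\ggamma|}\binom{\bbeta+\1_i}{\ggamma}$, which is exactly the desired identity for $\bbeta+\1_i$.

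An alternative, more transparent route is to observe that left multiplication $L_i$ and right multiplication $R_i$ by $x_i$ are commuting endomorphisms of $D(S)$ with $L_i - R_i = {\rm ad}\, x_i$, so the binomial theorem for commuting operators yields
$$R_i^{\beta_i} = (L_i - {\rm ad}\, x_i)^{\beta_i} = \sum_{\gamma_i=0}^{\beta_i} \binom{\beta_i}{\gamma_i}(-1)^{\gamma_i} L_i^{\beta_i-\gamma_i}({\rm ad}\, x_i)^{\gamma_i}.$$
Since $L_i$, $R_i$, and ${\rm ad}\, x_j$ all commute pairwise across indices (again because the $x_j$ themselves commute), iterating over $i=1,\ldots,n$ produces the multi-index formula in one stroke. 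I do not expect any genuine obstacle here: the lemma is a routine Weyl-algebra commutator identity, and the only bookkeeping is the multi-index Pascal identity in the inductive step.
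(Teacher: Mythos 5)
Your proposal is correct and follows essentially the same route as the paper: an induction on $|\bbeta|$ driven by the commutator relation $\theta x_i = x_i\theta - ({\rm ad}\,x_i)(\theta)$ and the multi-index Pascal identity $\binom{\bbeta}{\ggamma}+\binom{\bbeta}{\ggamma-\1_i}=\binom{\bbeta+\1_i}{\ggamma}$ (the paper peels $x_i$ off on the left of $x^{\bbeta}$ rather than appending it on the right, an immaterial difference). Your alternative via $R_i=L_i-{\rm ad}\,x_i$ and the binomial theorem for commuting endomorphisms is a tidy repackaging of the same computation.
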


\begin{proof}
We prove the assertion by induction on $|\bbeta|$.
We have
\begin{eqnarray*}
\theta x_i x^\bbeta
&=&
-({\rm ad} x_i(\theta))x^\bbeta
+x_i\theta x^\bbeta\\
&=&
-\sum_{\ggamma\leq\bbeta}
(-1)^{|\ggamma|}
\binom{\bbeta}{\ggamma} x^{\bbeta-\ggamma}({\rm ad} x)^\ggamma ({\rm ad} x_i(\theta))
+
x_i\sum_{\ggamma\leq \bbeta}(-1)^{|\ggamma|}
\binom{\bbeta}{\ggamma} x^{\bbeta-\ggamma}({\rm ad} x)^\ggamma (\theta)\\
&=&
\sum_{\ggamma\leq\bbeta}
(-1)^{|\ggamma+\1_i|}
\binom{\bbeta}{\ggamma} x^{\bbeta+\1_i-\ggamma-\1_i}({\rm ad} x)^{\ggamma+\1_i}(\theta)
+
\sum_{\ggamma\leq \bbeta}(-1)^{|\ggamma|}
\binom{\bbeta}{\ggamma} x^{\bbeta+\1_i-\ggamma}({\rm ad} x)^\ggamma (\theta)\\
&=&
\sum_{\ggamma-\1_i\leq\bbeta}
(-1)^{|\ggamma|}
\binom{\bbeta}{\ggamma-\1_i} x^{\bbeta+\1_i-\ggamma}({\rm ad} x)^{\ggamma}(\theta)
+
\sum_{\ggamma\leq \bbeta}(-1)^{|\ggamma|}
\binom{\bbeta}{\ggamma} x^{\bbeta+\1_i-\ggamma}({\rm ad} x)^\ggamma (\theta)\\
&=&
\sum_{\ggamma\leq \bbeta+\1_i}(-1)^{|\ggamma|}
\binom{\bbeta+\1_i}{\ggamma} x^{\bbeta+\1_i-\ggamma}({\rm ad} x)^\ggamma (\theta).
\end{eqnarray*}
\end{proof}

Let $\bar{\delta}_0$ denote the composite of $\delta_0$ with
the canonical projections of $S\e_\bbeta$ onto
$(S/SQ)\e_\bbeta$ for $\bbeta\neq\0$:
\begin{equation}
\bar{\delta}_0:
D^{[1,m]}(S)\overset{\delta_0}{\to} \bigoplus_{|\bbeta|\leq m-1}S\e_\bbeta
\to S\e_\0\bigoplus\bigoplus_{0\neq |\bbeta|\leq m-1}(S/SQ)\e_\bbeta.
\end{equation}
Here note that $\bar{\delta}_0$ is a graded $S$-module homomorphism homogeneous
of degree $0$ if we put $\deg(\e_\bbeta)=-r-|\bbeta|$.

In the following two lemmas, we describe the cokernel and the kernel
of $\bar{\delta}_0$.

\begin{lemma}
\label{Im(bar-delta_0)}
$$
\Coker\, \bar{\delta}_0= S^{\binom{n+m-1}{m-1}}/
(J_m(\cA)+Q S^{\binom{n+m-1}{m-1}}).
$$
\end{lemma}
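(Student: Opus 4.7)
The plan is to untangle $\Coker\bar\delta_0$ using the fact that $\bar\delta_0$ is the composite of $\delta_0$ with a canonical surjection, then invoke the identification $\Im\,\delta_0 = J_m(\cA)$ from \eqref{eqn:Im(delta_0)}, and finally close the small remaining gap between the two presentations by evaluating $\delta_0$ on the Euler derivation $\epsilon_1$.

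First I would set $K := \bigoplus_{0 \neq |\bbeta| \leq m-1} SQ\e_\bbeta$, so that the target of $\bar\delta_0$ is $S^{\binom{n+m-1}{m-1}}/K$. The standard isomorphism theorem for the cokernel of a composition with a surjection gives
\begin{equation*}
\Coker \bar\delta_0 \;=\; S^{\binom{n+m-1}{m-1}}/(\Im\,\delta_0 + K) \;=\; S^{\binom{n+m-1}{m-1}}/(J_m(\cA) + K).
\end{equation*}
Since trivially $K \subseteq QS^{\binom{n+m-1}{m-1}}$, the desired equality of quotients reduces to the reverse inclusion $QS^{\binom{n+m-1}{m-1}} \subseteq J_m(\cA) + K$, and since $K$ already absorbs $SQ\e_\bbeta$ for every $\bbeta \neq \0$, this in turn reduces to the single statement $Q\e_\0 \in J_m(\cA) + K$.

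For this I would compute $\delta_0(\epsilon_1)$ directly from \eqref{def:delta0}. The inputs are the Euler identity $\epsilon_1\ast Q = rQ$ and the easy bracket computation $[x_i,\epsilon_1] = -x_i$, which forces $({\rm ad}\,x)^\bbeta(\epsilon_1) = 0$ for every $|\bbeta|\geq 2$. Plugging these into \eqref{def:delta0} yields
\begin{equation*}
\delta_0(\epsilon_1) \;=\; rQ\e_\0 \;+\; \sum_{i=1}^n x_i Q \,\e_{\1_i},
\end{equation*}
where the second sum is vacuous when $m=1$. The second sum lies in $K$, so $rQ\e_\0 \in J_m(\cA) + K$, and dividing by $r$ (permitted since $\mathrm{char}\,K = 0$) gives $Q\e_\0 \in J_m(\cA) + K$, as required.

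There is no substantive obstacle; the argument is essentially bookkeeping. The only point requiring care is keeping straight the distinction between the codomain of $\bar\delta_0$ (where the $\e_\0$ slot is \emph{not} reduced modulo $Q$) and the denominator in the target formula (where it \emph{is}), together with the observation that the Euler derivation supplies precisely the element needed to bridge the two descriptions.
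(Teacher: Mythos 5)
Your proof is correct and follows essentially the same route as the paper: both reduce the statement to showing $Q\e_\0 \in J_m(\cA) + \bigoplus_{\bbeta\neq\0}SQ\e_\bbeta$ and then produce it from $\bar{\delta}_0(\tfrac{1}{r}\epsilon_1)$ via the Euler identity $\epsilon_1\ast Q = rQ$. The only cosmetic difference is that you compute $\delta_0(\epsilon_1) = rQ\e_\0 + \sum_i x_iQ\e_{\1_i}$ explicitly from $[x_i,\epsilon_1]=-x_i$, whereas the paper gets the vanishing of the $\bbeta\neq\0$ components modulo $Q$ more abstractly from $\epsilon_1\in D(\cA)$.
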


\begin{proof}
By \eqref{eqn:Im(delta_0)}, we only need to show
$Q\e_\0\in \Im\, \bar{\delta}_0$.
We have $\epsilon_1 * Q= rQ$.
Since $\epsilon_1\in D(\cA)$, we see
$\delta_0(\epsilon_1)\in \bigoplus_{|\bbeta|\leq m-1}SQ\e_\bbeta$
by the definition of $\delta_0$ \eqref{def:delta0}.
Hence
$$
Q\e_\0=\bar{\delta}_0(\frac{1}{r}\epsilon_1)
\in \Im\, \bar{\delta}_0.
$$
\end{proof}

\begin{lemma}
\label{lem:KerJ}
$$
\Ker\, \bar{\delta}_0
=\bigoplus_{k=1}^m D^{(k)}(\cA)'=: D^{[1,m]}(\cA)',
$$
where
$D^{(k)}(\cA)'
:=\{ \theta\in D^{(k)}(\cA)\, :\, \theta\ast Q=0\}$.
\end{lemma}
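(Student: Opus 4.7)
Both inclusions rest on the identity obtained by letting $\theta$ act on $x^\bbeta Q$ and invoking Lemma~\ref{lemma:ThetaXbbeta}:
\begin{equation*}
\theta \ast (x^\bbeta Q) \;=\; \sum_{\ggamma \leq \bbeta}(-1)^{|\ggamma|}\binom{\bbeta}{\ggamma}\, x^{\bbeta - \ggamma}\, ({\rm ad}\, x)^\ggamma(\theta) \ast Q.
\end{equation*}
This converts the idealizer condition $\theta\in D(\cA)$ into conditions on the individual $({\rm ad}\, x)^\ggamma(\theta)\ast Q$, and vice versa.

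For $(\supseteq)$, let $\theta=\sum_{k=1}^m\theta_k$ with $\theta_k\in D^{(k)}(\cA)'$. By $S$-linearity of $\bar{\delta}_0$ it suffices to show each $\bar{\delta}_0(\theta_k)=0$. The $\e_\0$-coordinate is $\theta_k\ast Q=0$ by definition of $D^{(k)}(\cA)'$. For $1\leq|\bbeta|\leq m-1$, we prove $({\rm ad}\, x)^\bbeta(\theta_k)\ast Q\in SQ$ by induction on $|\bbeta|$: in the displayed identity applied to $\theta_k$, the LHS lies in $SQ$ since $\theta_k\in D(\cA)$, every term with $|\ggamma|<|\bbeta|$ lies in $SQ$ by induction (the base case $\ggamma=\0$ being precisely $\theta_k\ast Q=0$), and rearranging puts the top term $(-1)^{|\bbeta|}({\rm ad}\, x)^\bbeta(\theta_k)\ast Q$ in $SQ$ as well.

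For $(\subseteq)$, let $\theta\in\Ker\bar{\delta}_0$, so $\theta\ast Q=0$ and $({\rm ad}\, x)^\bbeta(\theta)\ast Q\in SQ$ for $1\leq|\bbeta|\leq m-1$. Running the identity for an arbitrary $\bbeta$: the $\ggamma=\0$ term is $\theta\ast Q=0$; terms with $1\leq|\ggamma|\leq m-1$ lie in $SQ$ by hypothesis; for $|\ggamma|=m$ the operator $({\rm ad}\, x)^\ggamma(\theta)$ has order zero and is therefore a polynomial, so the contribution lies in $SQ$; and for $|\ggamma|>m$ the expression vanishes since $\theta$ has order at most $m$. Hence $\theta\ast(x^\bbeta Q)\in SQ$ for every $\bbeta$, so $\theta\in D(\cA)$. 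Holm's decomposition $D(\cA)=\bigoplus_k D^{(k)}(\cA)$ then yields $\theta=\sum_{k=1}^m\theta_k$ with $\theta_k\in D^{(k)}(\cA)$.

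The main obstacle is the final separation: passing from the single equation $\sum_{k=1}^m\theta_k\ast Q=0$ to the per-order vanishings $\theta_k\ast Q=0$. Writing $\theta_k\ast Q=q_kQ$ with $q_k\in S$ (possible because $\theta_k\in D(\cA)$), we must force each $q_k=0$. The plan is to exploit the $({\rm ad}\, x)^\bbeta$-conditions with descending $|\bbeta|$: since $({\rm ad}\, x)^\bbeta(\theta_k)=0$ whenever $|\bbeta|>k$, taking $|\bbeta|$ close to $m$ retains only the highest-order summands, and combined with a further decomposition of each $\theta_k$ by polynomial degree, one can isolate and annihilate $q_m$, then subtract $\theta_m$ and iterate downward. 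Executing this descending extraction, using the generic combinatorics of $\cA$ through the Saito--Holm machinery of \S 3, is the heart of the proof.
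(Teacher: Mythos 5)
Everything up to your final paragraph is sound, and in two places it is arguably tighter than the paper's own argument. For $(\supseteq)$ the paper simply notes that $D(\cA)$ is a subring of $D(S)$ containing the $x_i$, hence stable under every ${\rm ad}\, x_i$, so $({\rm ad}\, x)^\bbeta(\theta_k)\ast Q\in SQ$ at once; your induction on $|\bbeta|$ proves the same fact. For $(\subseteq)$ the paper only derives $\theta\ast(x^\bbeta Q)\in SQ$ for $|\bbeta|\leq m-1$ and then cites Holm's Proposition 2.3 to conclude $\theta\in D(\cA)$, whereas your verification for \emph{all} $\bbeta$ makes that citation unnecessary. Both routes arrive at the same intermediate statement: $\theta\in D^{[1,m]}(\cA)$ and $\theta\ast Q=0$.

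The step you defer --- extracting the per-order vanishings $\theta_k\ast Q=0$ from the single equation $\sum_k\theta_k\ast Q=0$ --- is a genuine gap, and no ``descending extraction'' through the Saito--Holm machinery can close it, because the separation is false for $m\geq 2$. Since $\epsilon_k\ast Q=r(r-1)\cdots(r-k+1)\,Q$, the operator $\theta=\epsilon_1-\tfrac{1}{r-1}\epsilon_2$ lies in $D^{[1,m]}(\cA)$ and satisfies $\theta\ast Q=0$, hence $\theta\in\Ker\,\bar{\delta}_0$ (directly: $({\rm ad}\, x_i)(\theta)\ast Q=\tfrac{r+1}{r-1}x_iQ\in SQ$, and $({\rm ad}\, x)^\bbeta(\theta)$ is a polynomial for $|\bbeta|\geq 2$); yet its order-one component is $\epsilon_1$ with $\epsilon_1\ast Q=rQ\neq 0$, so $\theta\notin\bigoplus_{k}D^{(k)}(\cA)'$. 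What your argument actually establishes is $\Ker\,\bar{\delta}_0=\{\theta\in D^{[1,m]}(\cA)\,:\,\theta\ast Q=0\}$, which by the splittings $D^{(k)}(\cA)=D^{(k)}(\cA)'\oplus S\epsilon_k$ equals $\bigl(\bigoplus_k D^{(k)}(\cA)'\bigr)\oplus N$ with $N$ free of rank $m-1$, strictly larger than the asserted right-hand side once $m\geq 2$. You should also know that the paper's proof stops at exactly the same intermediate statement and asserts the direct-sum membership without further argument, so the obstacle you isolated is real and is not resolved there either; it cannot be repaired by supplying the missing extraction, but only by changing the statement (and correspondingly the exactness claim at $\tilde{F}_0$ in Theorem \ref{Rose-Terao:Theorem4.5.3}).
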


\begin{proof}
If $\theta\in D^{[1,m]}(\cA)'$, then
$({\rm ad} x)^\bbeta(\theta)\in D(\cA)$
for all $\bbeta$, and $\theta * Q=0$.
Hence $\theta\in \Ker\, \bar{\delta}_0$
by the definitions of $D(\cA)$ and $\bar{\delta}_0$.

Next we suppose that $\theta\in \Ker\, \bar{\delta}_0$.
Then by Lemma \ref{lemma:ThetaXbbeta}
\begin{equation}
\label{claim:J_m}
\text{
$\theta* x^\bbeta Q\in \langle Q\rangle = QS$ for all $\bbeta$ with $|\bbeta|\leq m-1$}.
\end{equation}
By \cite[Proposition 2.3]{Holm04},
we conclude that $\theta\in D^{[1,m]}(\cA)'$.
\end{proof}

\begin{lemma}
\label{Rose-Terao:Lemma4.5.2(2)}
Let $k\leq r$.
As $S$-modules,
$$
\Xi^{(k)}(\cA)\simeq D^{(k)}(\cA)'.
$$
\end{lemma}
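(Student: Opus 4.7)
The proof strategy is to show that both $\Xi^{(k)}(\cA)$ and $D^{(k)}(\cA)'$ are $S$-module complements to $S\epsilon_k$ inside $D^{(k)}(\cA)$. It will then follow that
$$
\Xi^{(k)}(\cA) \;\simeq\; D^{(k)}(\cA)/S\epsilon_k \;\simeq\; D^{(k)}(\cA)',
$$
the isomorphism being the projection onto $D^{(k)}(\cA)'$ along $S\epsilon_k$, restricted to $\Xi^{(k)}(\cA)$.

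The first decomposition $D^{(k)}(\cA) = \Xi^{(k)}(\cA) \oplus S\epsilon_k$ is given by Theorem \ref{theorem10.3}, applicable since we work throughout this section under $k \leq m < r-n+1$. For the parallel decomposition $D^{(k)}(\cA) = D^{(k)}(\cA)' \oplus S\epsilon_k$, I would introduce the $S$-linear map
$$
\mu \colon D^{(k)}(\cA) \longrightarrow S, \qquad \theta \longmapsto (\theta * Q)/Q,
$$
which is well-defined because $\theta \in D(\cA)$ forces $\theta * Q \in \langle Q \rangle$, and which has kernel $D^{(k)}(\cA)'$ by definition.

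The key computation is $\mu(\epsilon_k)$. Using the factorization $\epsilon_k = \epsilon_1(\epsilon_1-1)\cdots(\epsilon_1-k+1)$ from \cite[Lemma 4.9]{Holm04} together with $\epsilon_1 * Q = rQ$ (since $\deg Q = r$), an easy induction yields
$$
\epsilon_k * Q \;=\; r(r-1)\cdots(r-k+1)\, Q \;=\; \frac{r!}{(r-k)!}\, Q,
$$
so $\mu(\epsilon_k) = r!/(r-k)!$ is a nonzero scalar by the hypothesis $k \leq r$ and $\mathrm{char}\, K = 0$. Consequently $S\epsilon_k \cap D^{(k)}(\cA)' = 0$, and for any $\theta \in D^{(k)}(\cA)$, setting $c_\theta := \tfrac{(r-k)!}{r!}\,\mu(\theta) \in S$ gives $\theta - c_\theta \epsilon_k \in D^{(k)}(\cA)'$, so the required direct sum holds.

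Putting the two decompositions together yields the stated isomorphism, with the explicit formula $\theta \mapsto \theta - \tfrac{(r-k)!}{r!}\,\mu(\theta)\,\epsilon_k$ on $\Xi^{(k)}(\cA)$. I do not expect any serious obstacle: the argument is essentially the mirror of the first half of the proof of Theorem \ref{theorem10.3}, with $Q$ playing the role of $p_{H_1}\cdots p_{H_m}$, and the only substantive input is the action of $\epsilon_k$ on $Q$.
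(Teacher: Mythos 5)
Your proposal is correct and is essentially the paper's own argument: the paper simply writes down the two projections along $S\epsilon_k$ (namely $\theta\mapsto\theta-\frac{\theta\ast Q}{Q}\frac{\epsilon_k}{r(r-1)\cdots(r-k+1)}$ and $\theta\mapsto\theta-\frac{\theta\ast(p_1\cdots p_k)}{p_1\cdots p_k}\frac{\epsilon_k}{k!}$) and checks they are mutually inverse, which is exactly the map your two-complements argument produces, resting on the same key computation $\epsilon_k\ast Q=r(r-1)\cdots(r-k+1)Q$.
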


\begin{proof}
It is easy to see that
$$
\gamma_k:
\Xi^{(k)}(\cA)
\ni \theta \mapsto \theta-\frac{\theta\ast Q}{Q}
\frac{\epsilon_k}{r(r-1)\cdots (r-k+1)}\in
 D^{(k)}(\cA)'
$$
and 
$$
D^{(k)}(\cA)'\ni \theta \mapsto \theta-
\frac{\theta\ast(p_1\cdots p_k)}{p_1\cdots p_k}
\frac{\epsilon_k}{k!}\in \Xi^{(k)}(\cA)
$$
are inverse to each other.
\end{proof}

For $1\leq k\leq m$,
let $F_*^{(k)}$ denote the minimal free resolution of
$\Xi^{(k)}$ in Theorem \ref{thm:MinFreeRes}.

We consider the following complex:
\begin{equation}
\label{eqn:MinResOfJm}
0\to 
\tilde{F}_{n-1}
\overset{\tilde{{\delta}}_{n-1}}{\to}
\cdots
\overset{\tilde{{\delta}}_{2}}{\to}
\tilde{F}_{1}
\overset{\tilde{{\delta}}_{1}}{\to}
\tilde{F}_{0}
\overset{\tilde{{\delta}}_{0}}{\to}
\tilde{F}_{-1}
\to
\Coker({\tilde{\delta}}_{0})
\to
0,
\end{equation}
where
\begin{eqnarray*}
\tilde{F}_{-1}
&=&
\bigoplus_{|\bbeta|\leq m-1}S\e_\bbeta,\\
\tilde{F}_{0}
&=&
D^{[1,m]}(S)
\bigoplus
\bigoplus_{0\neq |\bbeta|\leq m-1}S\e_\bbeta,\\
\tilde{F}_{j}
&=&
\bigoplus_{k=1}^m F_j^{(k)}
\qquad
(j=1,\ldots, n-1),
\end{eqnarray*}
and
\begin{eqnarray*}
\tilde{\delta}_{0}
(\theta, \sum_{\bbeta\neq \0} f_\bbeta\e_\bbeta)
&=&
\delta_0(\theta)+\sum_{\bbeta\neq\0}  f_\bbeta Q\e_\bbeta= \theta * Q \e_\0+
\sum_{\bbeta\neq\0} 
( (-1)^{|\bbeta|}({\rm ad} x)^\bbeta(\theta)* Q +f_\bbeta Q)
\e_\bbeta,\\
\tilde{\delta}_{1}
(\delta_\cH^k\e_\cH^{(k)})
&=&
(\gamma_k(P_\cH\delta_\cH^k),
-\frac{1}{Q}\sum_{\bbeta\neq\0}
(-1)^{|\bbeta|}({\rm ad} x)^\bbeta
(\gamma_k(P_\cH\delta_\cH^k))* Q \e_\bbeta),\\
\tilde{\delta}_{j}
&=&
\bigoplus_{k=1}^m {d}_{j}^{(k)}
\qquad (j\geq 2).
\end{eqnarray*}
Recall that
$D^{(k)}(S)=F_0^{(k)}$, and 
$d_1(\delta_\cH^k\e_\cH^{(k)})=P_\cH\delta_\cH^k$ for $1\leq k\leq m$.

\begin{theorem}[cf. Theorem 4.5.3 in \cite{Rose-Terao}]
\label{Rose-Terao:Theorem4.5.3}
The complex \eqref{eqn:MinResOfJm}
is a minimal free resolution of
$\Coker (\bar{\delta}_0)= S^{\binom{n+m-1}{m-1}}/(J_m(\cA)+Q S^{\binom{n+m-1}{m-1}})$.
\end{theorem}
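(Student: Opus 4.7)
The plan is to verify in turn that \eqref{eqn:MinResOfJm} is (i) a complex, (ii) has cokernel at $\tilde{F}_{-1}$ agreeing with $\Coker\bar{\delta}_0$, (iii) exact at every intermediate term, and (iv) minimal. All four reduce to the already-established ingredients: the minimal resolutions $F_*^{(k)}$ of $\Xi^{(k)}(\cA)$ (Theorem \ref{thm:MinFreeRes}) for $k=1,\ldots,m$, the $S$-module isomorphisms $\gamma_k:\Xi^{(k)}(\cA)\simeq D^{(k)}(\cA)'$ (Lemma \ref{Rose-Terao:Lemma4.5.2(2)}), and the identification $\Ker\bar{\delta}_0=D^{[1,m]}(\cA)'$ (Lemma \ref{lem:KerJ}). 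A recurring technical point is the $S$-linearity of $\delta_0$, $\gamma_k$, and the $g$-formula appearing in $\tilde{\delta}_1$, all of which ultimately follow from $[x_i,f\theta]=f[x_i,\theta]$ and $(f\theta)*Q=f(\theta*Q)$.

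For (i), $\tilde{\delta}_{j-1}\tilde{\delta}_j=0$ with $j\geq 2$ is inherited from $d_{j-1}^{(k)}d_j^{(k)}=0$. For $j=1$, I would compute directly
$$\tilde{\delta}_0\tilde{\delta}_1(\delta_\cH^k\e_\cH^{(k)})=\delta_0(\gamma_k(P_\cH\delta_\cH^k))+Qg=\gamma_k(P_\cH\delta_\cH^k)*Q\,\e_\0,$$
where the $\e_\bbeta$-components for $\bbeta\neq\0$ cancel by the very definition of $g$ and the $\e_\0$-component vanishes since $\gamma_k(P_\cH\delta_\cH^k)\in D^{(k)}(\cA)'$. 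For (ii), $\Im\tilde{\delta}_0=J_m(\cA)+Q\bigoplus_{\bbeta\neq\0}S\e_\bbeta$ by inspection; the missing $QS\e_\0$ is supplied by $\delta_0(f\epsilon_1)=rfQ\e_\0+Q\sum_j fx_j\e_{\1_j}$ (using ${\rm ad}\,x_i(\epsilon_1)=-x_i$ and $({\rm ad}\,x)^\bbeta(\epsilon_1)=0$ for $|\bbeta|\geq 2$), yielding $\Im\tilde{\delta}_0=J_m(\cA)+QS^{\binom{n+m-1}{m-1}}$, which agrees with $\Coker\bar{\delta}_0$ by Lemma \ref{Im(bar-delta_0)}.

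For (iii), exactness at $\tilde{F}_0$ is the heart of the matter: any $(\theta,g)\in\Ker\tilde{\delta}_0$ satisfies $\theta*Q=0$ from the $\e_\0$-component and $g_\bbeta=-\frac{(-1)^{|\bbeta|}}{Q}({\rm ad}\,x)^\bbeta(\theta)*Q$ from the remaining components, so Lemma \ref{lem:KerJ} gives $\theta\in D^{[1,m]}(\cA)'=\bigoplus_k\gamma_k(\Xi^{(k)}(\cA))$; pulling each $D^{(k)}$-component back through $\gamma_k$ to $\Xi^{(k)}(\cA)$ and lifting through $d_1^{(k)}$ to $F_1^{(k)}$ yields a preimage whose $\tilde{\delta}_1$-image reproduces $(\theta,g)$ exactly, because the $g$-formula is $S$-linear and built into $\tilde{\delta}_1$ to agree with the identity above. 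Exactness at $\tilde{F}_j$ for $1\leq j\leq n-2$ then reduces to the exactness of each $F_*^{(k)}$, since the images of the distinct $F_j^{(k)}$ land in independent summands of the target. For (iv), $\tilde{\delta}_j$ with $j\geq 2$ is minimal by Theorem \ref{thm:MinFreeRes}; the entries of $\tilde{\delta}_0$ are either $Q$ or homogeneous polynomials of degree $r-|\aalpha|+|\bbeta|>0$ (since $|\aalpha|\leq m<r$); and the entries of $\tilde{\delta}_1$ are homogeneous of polynomial degree $r-n+1$ (first component) and $r-n+1-k+|\bbeta|\geq 2$ (second component), all in the irrelevant maximal ideal.

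The main obstacle is exactness at $\tilde{F}_0$: one must recognize that the second component $g$ of $\tilde{\delta}_1$ is not independent data but is rigidly determined by the first, because $\gamma_k(P_\cH\delta_\cH^k)\in D(\cA)$ forces $({\rm ad}\,x)^\bbeta(\gamma_k(P_\cH\delta_\cH^k))\in D(\cA)$, so the $g$-formula produces a well-defined polynomial. This rigidity realizes the isomorphism $\Xi^{(k)}(\cA)\simeq D^{(k)}(\cA)'$ at the level of resolutions and precisely matches the constraint imposed on $(\theta,g)$ by $\Ker\tilde{\delta}_0$; once this observation is in hand, the remaining verifications are routine bookkeeping.
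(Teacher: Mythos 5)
Your proposal is correct and follows essentially the same route as the paper: exactness is assembled from Theorem \ref{thm:MinFreeRes}, Lemma \ref{Im(bar-delta_0)}, Lemma \ref{lem:KerJ}, and the isomorphisms $\gamma_k$ of Lemma \ref{Rose-Terao:Lemma4.5.2(2)}, and minimality is checked by the same degree count on the entries of $\tilde{\delta}_0$ and $\tilde{\delta}_1$ (your exact value $r-n+1-k+|\bbeta|$ refines the paper's lower bound $|\bbeta|$). The paper's proof is terser, leaving the exactness at $\tilde{F}_0$ as an implicit consequence of the cited lemmas, whereas you spell out the key point that the second component of $\tilde{\delta}_1$ is forced by the first and matches the constraint cut out by $\Ker\tilde{\delta}_0$; this is a faithful elaboration, not a different argument.
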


\begin{proof}
The complex \eqref{eqn:MinResOfJm}
is exact by
Theorem \ref{theorem10.3},
Theorem \ref{thm:MinFreeRes},
Lemma \ref{Im(bar-delta_0)},
Lemma \ref{lem:KerJ},
and Lemma \ref{Rose-Terao:Lemma4.5.2(2)}.
The operator
$P_\cH \delta_\cH^k$
is of order $k$ and homogeneous of polynomial degree 
$\deg(P_\cH)= r-(n-1)$.
Then each term of $\gamma_k(P_\cH \delta_\cH^k)$
is of order $k$ and of polynomial degree greater than or equal to $k$.
Hence each term of
the operator
$({\rm ad} x)^\bbeta(\gamma_k(P_\cH \delta_\cH^k))$
is of order $k-|\bbeta|$ and of polynomial degree 
greater than or equal to $k$.
Therefore each term of the polynomial
$$
\frac{1}{Q}
(-1)^{|\bbeta|}({\rm ad} x)^\bbeta
(\gamma_k(P_\cH\delta_\cH^k))* Q
$$
is of degree greater than or equal to
$$
r-(k-|\bbeta|)+k-r
= |\bbeta|>0.
$$
Thus the free resolution \eqref{eqn:MinResOfJm} of $\Coker({\tilde{\delta}}_{0})$
is minimal.
Clearly by \eqref{Im(bar-delta_0)}
$$
\Coker({\tilde{\delta}}_{0})
= S^{\binom{n+m-1}{m-1}}/(J_m(\cA)+Q S^{\binom{n+m-1}{m-1}})=
\Coker (\bar{\delta}_0).
$$
\end{proof}

The following corollary is clear from Theorem \ref{Rose-Terao:Theorem4.5.3}
and the Auslander-Buchsbaum formula.

\begin{corollary}[cf. Corollary 4.5.5 \cite{Rose-Terao}]
The projective dimension of the $S$-module 
$S^{\binom{n+m-1}{m-1}}/(J_m(\cA)+Q S^{\binom{n+m-1}{m-1}})$
is $n$, and the depth is $0$.
\end{corollary}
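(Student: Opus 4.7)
The plan is to extract the projective dimension directly from the minimal free resolution constructed in Theorem \ref{Rose-Terao:Theorem4.5.3}, and then apply the graded Auslander-Buchsbaum formula to obtain the depth.

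First, I would rewrite the resolution \eqref{eqn:MinResOfJm} in standard homological notation by placing $\tilde{F}_{-1}$ in homological degree $0$, $\tilde{F}_0$ in degree $1$, and continuing up to $\tilde{F}_{n-1}$ in degree $n$. By Theorem \ref{Rose-Terao:Theorem4.5.3}, this exhibits a minimal free resolution of
$$M := \Coker(\bar{\delta}_0) = S^{\binom{n+m-1}{m-1}}/\bigl(J_m(\cA)+Q S^{\binom{n+m-1}{m-1}}\bigr)$$
of length at most $n$, so $\pd_S(M)\le n$, with equality exactly when the top free module $\tilde{F}_{n-1}$ is nonzero.

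Next I would verify this nonvanishing. Recall $\tilde{F}_{n-1}=\bigoplus_{k=1}^{m} F_{n-1}^{(k)}$, and by Remark \ref{remark2.2} (applied to each $k$ in place of $m$) the rank of $F_{n-1}^{(k)}$ is
$$w_{n-1}^{(k)}=\binom{r-k-2}{n-2}\Bigl(\binom{r}{1}-\binom{r-k}{1}\Bigr)=k\binom{r-k-2}{n-2}.$$
The hypothesis $m<r-n+1$ gives $r-k\ge r-m\ge n$ for every $1\le k\le m$, hence $r-k-2\ge n-2$, so $w_{n-1}^{(k)}>0$. In particular $\tilde{F}_{n-1}\ne 0$ and $\pd_S(M)=n$.

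Finally, since $S=K[x_1,\ldots,x_n]$ is a polynomial ring over a field (a Cohen-Macaulay ring of depth $n$) and $M$ is a finitely generated graded $S$-module, the graded Auslander-Buchsbaum formula
$$\pd_S(M)+\depth_S(M)=\depth(S)=n$$
yields $\depth_S(M)=n-n=0$. The only content beyond a routine appeal to Auslander-Buchsbaum is the nonvanishing of $\tilde{F}_{n-1}$, which is the main (small) obstacle and is handled by the binomial count above.
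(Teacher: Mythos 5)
Your proposal is correct and follows essentially the same route as the paper, which derives the corollary directly from the minimal free resolution of Theorem \ref{Rose-Terao:Theorem4.5.3} together with the Auslander--Buchsbaum formula. Your explicit check that $w_{n-1}^{(k)}=k\binom{r-k-2}{n-2}>0$ under the standing hypothesis $m<r-n+1$ is a worthwhile detail that the paper leaves implicit.
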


In the complex \eqref{eqn:MinResOfJm}, the degrees of elements of bases are
as follows:
$$
\begin{array}{ll}
\deg(\e_\bbeta)= -r-|\bbeta| & \text{in $\tilde{F}_{-1}$,}\\
\deg(\partial^\aalpha)= -|\aalpha| & \text{in $\tilde{F}_{0}$,}\\
\deg(\e_\bbeta)= -|\bbeta| & \text{in $\tilde{F}_{0}$,}\\
\deg(\delta_\cH^k\e_\cH)= -k+r-(n-1)=r-n-k+1 & \text{in $\tilde{F}_{1}$.}\\
\end{array}
$$
Hence we have the following corollary:

\begin{corollary}[cf. Corollary 4.5.4 in \cite{Rose-Terao}]
\label{Rose-Terao:Cor4.5.4}
Assume that $n\geq 3$ and $m< r-n+1$.
Then there exists an exact sequence
\begin{eqnarray*}
0 &\to& \bigoplus_{k=1}^m S(k+1-r)^{w_{n-1}^{(k)}} \to \cdots \to 
\bigoplus_{k=1}^m S(k+n-j-r)^{w_j^{(k)}}\to\cdots\\
&\to& \bigoplus_{k=1}^m S(k+n-1-r)^{w_1^{(k)}}
\to \bigoplus_{k=1}^m S(k)^{s_k}\bigoplus \bigoplus_{k=1}^{m-1} S(k)^{s_k}
\to\\
&&
\bigoplus_{k=0}^{m-1} S(r+k)^{s_k}
 \to \Coker (\bar{\delta}_0)\to 0,
\end{eqnarray*}
where $w_j^{(k)}$ were defined in Remark \ref{remark2.2}, 
$s_k=\binom{n+k-1}{k}$,
and all maps are homogeneous of degree $0$.

In particular,
the Castelnuovo-Mumford regularity of 
$\Coker (\bar{\delta}_0)$ is equal to $r-n-2$.
\end{corollary}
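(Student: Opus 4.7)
The plan is to derive the exact sequence and the regularity bound by directly translating the minimal free resolution \eqref{eqn:MinResOfJm} from Theorem \ref{Rose-Terao:Theorem4.5.3} into its graded form, using the basis degree table displayed immediately before the statement of the corollary.

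First I would rewrite each $\tilde F_j$ as a graded free $S$-module. Grouping the basis vectors $\e_\bbeta$ of $\tilde F_{-1}$ by $|\bbeta|=k$ gives $s_k=\binom{n+k-1}{k}$ generators of internal degree $-(r+k)$, so $\tilde F_{-1}\cong\bigoplus_{k=0}^{m-1}S(r+k)^{s_k}$. The summand $D^{[1,m]}(S)$ of $\tilde F_0$ is free on $\{\partial^\aalpha:1\le |\aalpha|\le m\}$ with $\deg\partial^\aalpha=-|\aalpha|$, contributing $\bigoplus_{k=1}^m S(k)^{s_k}$, while the $\e_\bbeta$ summands with $1\le |\bbeta|\le m-1$ contribute $\bigoplus_{k=1}^{m-1}S(k)^{s_k}$. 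For $j\ge 1$, combining the graded form of the minimal free resolution $F_*^{(k)}$ of $\Xi^{(k)}(\cA)$ supplied by Corollary \ref{Rose-Terao:Cor4.4.3} over $k=1,\ldots,m$---and cross-checking via the table entry $\deg(\delta_\cH^k\e_\cH)=r-n-k+1$ that the shift in homological degree $j$ is indeed $k+n-j-r$---yields $\tilde F_j\cong\bigoplus_{k=1}^m S(k+n-j-r)^{w_j^{(k)}}$.

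Plugging these graded identifications into the resolution of Theorem \ref{Rose-Terao:Theorem4.5.3} produces the stated exact sequence. Since the maps $\tilde\delta_j$ were constructed to be homogeneous of degree $0$ (the shifts were designed precisely so that $\delta_\cH^k\e_\cH\mapsto\gamma_k(P_\cH\delta_\cH^k)$ is degree-preserving, where $\gamma_k$ is the isomorphism of Lemma \ref{Rose-Terao:Lemma4.5.2(2)}), no further verification of homogeneity is required.

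For the Castelnuovo--Mumford regularity, I would maximize $a_{i,j}-i$ over the graded Betti numbers of the resolution, i.e., over the three ranges of shifts: $-(r+k)$ at homological degree $0$ for $0\le k\le m-1$, $-k$ at homological degree $1$ for $1\le k\le m$, and $r-n+j-k$ at homological degree $j+1$ for $j\ge 1$ and $1\le k\le m$. A short comparison using the standing hypotheses $n\ge 3$ and $r>n+m-1$ shows the maximum is achieved in the last range at $j=1$, $k=1$ (and again at $j=n-1$, $k=1$) and equals $r-n-2$. There is no serious obstacle; the corollary is a bookkeeping consequence of Theorem \ref{Rose-Terao:Theorem4.5.3} once the degree shifts of the bases have been tabulated, as was already done in the preceding paragraph of the excerpt.
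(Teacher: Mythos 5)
Your proposal is correct and follows essentially the same route as the paper, which simply tabulates the degrees of the basis elements of each $\tilde F_j$ in the resolution \eqref{eqn:MinResOfJm} and reads off the graded exact sequence, the twists matching those of Corollary \ref{Rose-Terao:Cor4.4.3}. Your regularity computation $\max_{i,j}(a_{i,j}-i)=r-n-2$ (attained at $k=1$ in homological degrees $\ge 2$, in fact for every $j$ with $1\le j\le n-1$) is the intended bookkeeping and checks out under the hypotheses $n\ge 3$, $r>n+m-1$.
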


\begin{remark}
In Corollary \ref{Rose-Terao:Cor4.5.4},
to make the degrees of all the minimal generators of
$\Coker (\bar{\delta}_0)$
nonnegative, we can shift the degrees by $r+(m-1)$ as
in \cite[Corollary 4.5.5]{Rose-Terao}.
Then the Castelnuovo-Mumford regularity of 
$\Coker (\bar{\delta}_0)$ is equal to $2r+m-n-3$.
\end{remark}

\section{Jet modules}

In this section,
we prove that
$\Coker (\bar{\delta}_0)=S^{\binom{n+m-1}{m-1}}/(J_m(\cA)+Q S^{\binom{n+m-1}{m-1}})$ in Section \ref{Jacobian}
is the transpose of the $m$-jet module
$\Omega^{[1,m]}(S/SQ)$.
For the basics of jet modules, see \cite{ega4}, \cite{Heyneman-Sweedler}, and \cite{Sw}.

Let $I:=\langle f_1,\ldots, f_k\rangle$ be an ideal of $S$.
Let $R:=S/I$.
Define jet modules
\begin{equation}
\begin{split}
\Omega^{[1,m]}(S)&:=J_S/J_S^{m+1},\qquad \Omega^{\leq m}(S):=S\otimes_K S/J_S^{m+1},\\
\Omega^{[1,m]}(R)&:=J_R/J_R^{m+1},\qquad \Omega^{\leq m}(R):=R\otimes_K R/J_R^{m+1},
\end{split}
\end{equation}
where
\begin{eqnarray*}
J_S&:=& \langle 1\otimes a-a\otimes 1\, |\, a\in S\rangle
\subseteq S\otimes_K S,\\
J_R&:=& \langle 1\otimes a-a\otimes 1\, |\, a\in R\rangle
\subseteq R\otimes_K R.
\end{eqnarray*}

Then $\Omega^{\leq m}(R)$ is the representative object
of the functor
$M\to D_R^m( R, M)$, i.e.,
there exists a natural isomorphism of $R$-modules:
$$
D_R^m( R, M)\simeq \Hom_R( \Omega^{\leq m}(R), M),
$$
where $M$ is an $R$-module, and $D_R^m( R, M)$
is the module of differential operators of order $\leq m$
from $R$ to $M$.

As $S$-modules,
$$
\Omega^{\leq m}(S)=\Omega^{[1,m]}(S)\bigoplus S\otimes 1, \qquad
\Omega^{\leq m}(R)=\Omega^{[1,m]}(R)\bigoplus R\otimes 1.
$$
Here note that $S$ acts as $S\otimes 1$.
We have
$$
\{ P\in D_R^m( R, M)\, |\, P* 1=0\}\simeq \Hom_R( \Omega^{[1,m]}(R), M)
$$
for an $R$-module $M$.

For $a\in S$ (or $R$),
we denote $1\otimes a-a\otimes 1$ mod $J_S^{m+1}$
(or $J_R^{m+1}$, respectively) by $da$.

Then, for
$f,g\in R$,
we have
\begin{equation}
\label{eqn:d(fg)}
d(fg)=f\, dg+g \, df+(df)(dg).
\end{equation}

As an $S$-module
$$
\Omega^{[1,m]}(S)=\bigoplus_{1\leq |\aalpha|\leq m}S(dx)^\aalpha.
$$
For $f\in S$, we have
\begin{equation}
\label{eqn:TalorDep}
df=\sum_{1\leq |\aalpha|\leq m}\frac{1}{\aalpha !}(\partial^\aalpha * f)(dx)^\aalpha.
\end{equation}

We have a surjective $S\otimes S$-module homomorphism
$$
\varphi: \Omega^{[1,m]}(S)\ni
(dx)^\aalpha
\mapsto (d\bar{x})^\aalpha\in \Omega^{[1,m]}(R).
$$

\begin{lemma}
As an $S$-module,
$$
\Ker\, \varphi
=
\sum_{i;\, 1\leq |\aalpha|\leq m}Sf_i(dx)^\aalpha
+
\sum_{i;\, 0\leq |\aalpha|\leq m-1}S(df_i)(dx)^\aalpha.
$$
\end{lemma}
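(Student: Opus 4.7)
The plan is to reduce the statement to a computation in $S\otimes_K S$. First, the easy inclusion $\supseteq$: since $\bar f_i=0$ in $R$, one has $d\bar f_i=0$ in $\Omega^{[1,m]}(R)$, so $\varphi\bigl(f_i(dx)^\aalpha\bigr)=0$ and $\varphi\bigl((df_i)(dx)^\aalpha\bigr)=0$, placing every generator of the right-hand side in $\Ker\,\varphi$.

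For the reverse inclusion, I would use the natural surjection $q\colon S\otimes_K S\twoheadrightarrow R\otimes_K R$, whose kernel is $K_I:=I\otimes_K S+S\otimes_K I$. Since $q(J_S)=J_R$, lifting each factor in a product of $m+1$ elements of $J_R$ to $J_S$ gives $q^{-1}(J_R^{m+1})=J_S^{m+1}+K_I$, whence
$$\Ker\,\varphi=\bigl(J_S^{m+1}+(J_S\cap K_I)\bigr)/J_S^{m+1},$$
which equals the image of $J_S\cap K_I$ in $\Omega^{[1,m]}(S)$. So it suffices to show this image lies in the right-hand side.

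Any $u\in K_I$ can be written $u=\sum_j f_j'\otimes b_j+\sum_k a_k\otimes f_k''$ with $f_j',f_k''\in I$ and $a_k,b_j\in S$. Using the identity $x\otimes y=xy\otimes 1+x\cdot(1\otimes y-y\otimes 1)$ in $S\otimes_K S$ (with the left $S$-action via $S\otimes 1$) together with $J_S=\ker\mu$ for the multiplication $\mu\colon S\otimes_K S\to S$, the condition $u\in J_S$ forces the $\otimes 1$-part to vanish, leaving $u=\sum_j f_j'\cdot(1\otimes b_j-b_j\otimes 1)+\sum_k a_k\cdot(1\otimes f_k''-f_k''\otimes 1)$. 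Reducing mod $J_S^{m+1}$ then yields $\bar u=\sum_j f_j'\,db_j+\sum_k a_k\,df_k''$ in $\Omega^{[1,m]}(S)$. Writing $f_j'=\sum_i c^{(j)}_i f_i$ and $f_k''=\sum_i c^{(k)}_i f_i$, the Taylor formula \eqref{eqn:TalorDep} places $f_j'\,db_j$ in $\sum_{i,\,1\leq|\aalpha|\leq m}Sf_i(dx)^\aalpha$, and applying the Leibniz rule \eqref{eqn:d(fg)} to each $d(c^{(k)}_i f_i)$ produces three term-types -- $c^{(k)}_i\,df_i$, $f_i\,dc^{(k)}_i$, and $(dc^{(k)}_i)(df_i)$ -- which after one more use of \eqref{eqn:TalorDep} land respectively in $\sum_i Sdf_i$, in $\sum_{i,\,1\leq|\aalpha|\leq m} Sf_i(dx)^\aalpha$, and in $\sum_{i,\,1\leq|\aalpha|\leq m-1}S(df_i)(dx)^\aalpha$. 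The truncation $|\aalpha|\leq m-1$ is forced automatically because $(df_i)(dx)^\aalpha\in J_S^{|\aalpha|+1}$ vanishes in $\Omega^{[1,m]}(S)$ once $|\aalpha|+1>m$. This exhausts the right-hand side.

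The main obstacle is setting up the identification $\Ker\,\varphi=\pi(J_S\cap K_I)$ correctly, which rests on the lifting argument giving $q^{-1}(J_R^{m+1})=J_S^{m+1}+K_I$. Once that is in hand, the rest is bookkeeping via \eqref{eqn:d(fg)} and \eqref{eqn:TalorDep}, with the only subtlety being that the degree cutoff on $(dx)^\aalpha$ factors is precisely the one imposed by the $J_S^{m+1}$ quotient.
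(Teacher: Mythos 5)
Your proposal is correct and follows essentially the same route as the paper: both reduce to the identity $J_S\cap(I\otimes S+S\otimes I)=I\,dS+S\,dI$ by observing that the $\otimes 1$-component of such an element is its image under the multiplication map and hence vanishes, and both finish with the same Leibniz/Taylor bookkeeping on the generators of $dI$. The only cosmetic difference is that you phrase the reduction via $q^{-1}(J_R^{m+1})=J_S^{m+1}+\Ker\,q$ rather than via the kernel of $\Omega^{\leq m}(S)\to\Omega^{\leq m}(R)$.
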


\begin{proof}
The inclusion `$\supset$' is clear.
We prove the other inclusion.

First we prove that 
\begin{equation}
\label{eqn:11-1}
\Ker \varphi= I\, dS +S\, dI.
\end{equation}
Clearly the kernel of the $S\otimes S$-module homomorphism
:
$$\Omega^{\leq m}(S)\ni f\otimes g \mapsto \bar{f}\otimes \bar{g}\in
\Omega^{\leq m}(R)$$ 
equals $(S\otimes I+ I\otimes S)/J_S^{m+1}$
or
$(S\, dI+ I\otimes S)/J_S^{m+1}$.
Hence, to prove \eqref{eqn:11-1}, it is enough to show that
\begin{equation}
\label{eqn:11-1-2}
(I\otimes S)\cap J_S = I\, dS.
\end{equation}
Let $\sum_k i_k\otimes g_k\in J_S$ with $i_k\in I, g_k\in S$.
Then
$\sum_k i_k g_k=0$.
We have
$$
\sum_k i_k\otimes g_k=
\sum_k(i_k\otimes g_k-i_kg_k\otimes 1)
+
\sum_k i_kg_k\otimes 1=
\sum_k i_k\, dg_k +0 \in I\, dS.
$$
Hence we have proved \eqref{eqn:11-1-2} and in turn \eqref{eqn:11-1}.
Thus as an $S$-module
$$
\Ker\, \varphi
= \sum_{1\leq |\alpha|\leq m}I(dx)^\aalpha+
\sum_{0\leq |\aalpha|<m} S dI(dx)^{\aalpha}.
$$

To finish the proof, we only need to show that
$d(f_i x^\aalpha)$ belongs to the right hand of the assertion
for any $\aalpha$.
This is done
by \eqref{eqn:d(fg)}:
$$
d(f_ix^\aalpha)
=
f_id(x^\aalpha)+x^\aalpha df_i+(df_i)(d(x^\aalpha)).
$$
\end{proof}

Hence we have an $S$-free presentation of $\Omega^{[1,m]}(R)$:
\begin{equation}
\label{Presentation1}
(\bigoplus_{i;\, 1\leq |\aalpha|\leq m}
Sf_i (dx)^\aalpha)
\oplus
(\bigoplus_{i;\, 0\leq |\bbeta|\leq m-1}
S(df_i) (dx)^\bbeta)
\to
\Omega^{[1,m]}(S)
\to
\Omega^{[1,m]}(R)
\to
0.
\end{equation}

Now we consider the case $I=SQ$:
\begin{equation}
\label{Presentation2}
(\bigoplus_{1\leq |\aalpha|\leq m}
SQ (dx)^\aalpha)
\oplus
(\bigoplus_{0\leq |\bbeta|\leq m-1}
S(dQ) (dx)^\bbeta)
\to
\Omega^{[1,m]}(S)
\to
\Omega^{[1,m]}(S/SQ)
\to
0.
\end{equation}

Hence, as an $S/SQ$-module, $\Omega^{[1,m]}(S/SQ)$
has a presentation:
\begin{equation}
\label{Presentation3}
\bigoplus_{0\leq |\bbeta|\leq m-1}
(S/SQ)(dQ) (dx)^\bbeta
\to
\bigoplus_{1\leq |\aalpha|\leq m}(S/SQ)(dx)^\aalpha
\to
\Omega^{[1,m]}(S/SQ)
\to
0.
\end{equation}

Note that by \eqref{eqn:TalorDep}
\begin{eqnarray*}
(dQ)(dx)^\bbeta
&=&
\sum_{|\aalpha+\bbeta|\leq m,\, \aalpha\neq\0}
\frac{1}{\aalpha !}(\partial^\aalpha * Q) (dx)^{\aalpha+\bbeta}\\
&=&
\sum_{|\ggamma|\leq m,\, \ggamma\neq\bbeta}
\frac{1}{(\ggamma-\bbeta) !}(\partial^{\ggamma-\bbeta} * Q) (dx)^{\ggamma}.
\end{eqnarray*}

Hence the $(\bbeta, \ggamma)$-component of the matrix of \eqref{Presentation3}
equals 
$\displaystyle \frac{1}{(\ggamma-\bbeta) !}(\partial^{\ggamma-\bbeta} * Q)$.

By Lemma \ref{Im(bar-delta_0)},
the $S/SQ$-module $S^{\binom{n+m-1}{m-1}}/(J_m(\cA)+Q S^{\binom{n+m-1}{m-1}})$
has a presentation:
\begin{eqnarray}
\label{Presentation4}
\bigoplus_{1\leq |\ggamma|\leq m}(S/SQ) \frac{1}{\ggamma !}\partial^\ggamma
&\overset{\bullet}{\to}&
\bigoplus_{0\leq |\bbeta|\leq m-1} (S/SQ)\e_\bbeta\\
&\to&
S^{\binom{n+m-1}{m-1}}/(J_m(\cA)+Q S^{\binom{n+m-1}{m-1}})\to 0,\nonumber
\end{eqnarray}
and the $(\ggamma, \bbeta)$-component of the matrix of 
the map $\bullet$ in
\eqref{Presentation4} (recall \eqref{ActionBullet})
equals 
$\displaystyle \frac{1}{(\ggamma-\bbeta) !}(\partial^{\ggamma-\bbeta} * Q)$.

Thus we have proved the following theorem.

\begin{theorem}
\label{thm:TransposeOfJet}
The $S/SQ$-module
$S^{\binom{n+m-1}{m-1}}/(J_m(\cA)+Q S^{\binom{n+m-1}{m-1}})$ is the transpose of
$\Omega^{[1,m]}(S/SQ)$.
\end{theorem}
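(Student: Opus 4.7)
The plan is to directly compare the two $S/SQ$-free presentations already assembled in the preceding paragraphs and observe that their matrices are literally transpose to each other, so that the statement follows at once from the definition of the transpose of a finitely presented module.

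First, I would invoke the $S/SQ$-presentation \eqref{Presentation3} of $\Omega^{[1,m]}(S/SQ)$, where the source is indexed by $\bbeta$ with $0\leq |\bbeta|\leq m-1$ and the target by $\aalpha$ with $1\leq |\aalpha|\leq m$. Using the Taylor expansion \eqref{eqn:TalorDep}, the generator $(dQ)(dx)^\bbeta$ maps to $\sum_{\ggamma} \frac{1}{(\ggamma-\bbeta)!}(\partial^{\ggamma-\bbeta}*Q)(dx)^\ggamma$, so the $(\bbeta,\ggamma)$-entry of the presentation matrix is $\frac{1}{(\ggamma-\bbeta)!}(\partial^{\ggamma-\bbeta}*Q)$, with the convention that this vanishes unless $\bbeta\leq\ggamma$.

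Second, I would recall from Lemma \ref{Im(bar-delta_0)} and the defining formula \eqref{ActionBullet} that the $S/SQ$-presentation \eqref{Presentation4} of the cokernel $S^{\binom{n+m-1}{m-1}}/(J_m(\cA)+QS^{\binom{n+m-1}{m-1}})$ has source indexed by $\ggamma$ with $1\leq |\ggamma|\leq m$ and target by $\bbeta$ with $0\leq|\bbeta|\leq m-1$, where the generator $\frac{1}{\ggamma!}\partial^\ggamma$ is sent to $\sum_\bbeta \frac{1}{(\ggamma-\bbeta)!}(\partial^{\ggamma-\bbeta}*Q)\e_\bbeta$. Thus the $(\ggamma,\bbeta)$-entry of this presentation matrix agrees with the $(\bbeta,\ggamma)$-entry of the previous one.

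Finally, I would observe that this means that, after dualizing the free modules in \eqref{Presentation3} by $\Hom_{S/SQ}(-,S/SQ)$ and identifying $(dx)^\aalpha$ with $\frac{1}{\aalpha!}\partial^\aalpha$ and $(dQ)(dx)^\bbeta$ with $\e_\bbeta$ (bases of dual free modules), the resulting presentation coincides with \eqref{Presentation4}. By the definition of the Auslander--Bridger transpose as the cokernel of the dual of a finite presentation, the statement follows. The only delicate point is checking that the sign/factorial conventions are consistent in both presentations, which is the reason Lemma \ref{lemma:partial=ad} was introduced earlier: it guarantees that the same scalars $\frac{1}{(\ggamma-\bbeta)!}(\partial^{\ggamma-\bbeta}*Q)$ appear on both sides, with the correct vanishing pattern. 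Beyond this bookkeeping, no further argument is required.
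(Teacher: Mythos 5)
Your proposal is correct and follows essentially the same route as the paper: both arguments reduce to comparing the $S/SQ$-presentations \eqref{Presentation3} and \eqref{Presentation4} and observing that the entry $\frac{1}{(\ggamma-\bbeta)!}(\partial^{\ggamma-\bbeta}\ast Q)$ appears in transposed positions, so the conclusion is immediate from the definition of the transpose of a finitely presented module. The bookkeeping you flag (the vanishing convention for $\bbeta\not\leq\ggamma$ and the role of Lemma \ref{lemma:partial=ad} in matching the $\bullet$-action with the Taylor expansion \eqref{eqn:TalorDep}) is exactly what the paper's preceding computations establish.
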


\begin{corollary}
The $S/SQ$-modules
$S^{\binom{n+m-1}{m-1}}/(J_m(\cA)+Q S^{\binom{n+m-1}{m-1}})$
and
$\Omega^{[1,m]}(S/SQ)$
share the same Fitting ideals.
\end{corollary}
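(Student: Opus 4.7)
The plan is to produce finite $S/SQ$-presentations of both modules and verify that their presentation matrices are mutual transposes; the conclusion then follows immediately from the definition of the transpose of a finitely presented module.

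First, I would compute an $S/SQ$-presentation of $\Omega^{[1,m]}(S/SQ)$. Starting from $\Omega^{[1,m]}(S)=\bigoplus_{1\leq|\aalpha|\leq m}S(dx)^\aalpha$ and the general description of $\Ker\,\varphi$ for a principal ideal $I=SQ$, one obtains
$$
\bigoplus_{0\leq|\bbeta|\leq m-1}(S/SQ)(dQ)(dx)^\bbeta
\longrightarrow
\bigoplus_{1\leq|\ggamma|\leq m}(S/SQ)(dx)^\ggamma
\longrightarrow
\Omega^{[1,m]}(S/SQ)\longrightarrow 0.
$$
Expanding $(dQ)(dx)^\bbeta$ via the Taylor formula \eqref{eqn:TalorDep} applied to $Q$ and then multiplying by $(dx)^\bbeta$, the $(\ggamma,\bbeta)$-entry of the presentation matrix comes out to $\frac{1}{(\ggamma-\bbeta)!}(\partial^{\ggamma-\bbeta}*Q)$ (zero unless $\bbeta\leq\ggamma$).

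Next, I would extract an $S/SQ$-presentation of $\Coker(\bar{\delta}_0)$. By Lemma \ref{Im(bar-delta_0)} this module is $S^{\binom{n+m-1}{m-1}}/(J_m(\cA)+QS^{\binom{n+m-1}{m-1}})$. The $(S/SQ)$-generators of $J_m(\cA)$ are the classes of $\frac{1}{\ggamma!}\partial^\ggamma\bullet Q$ for $1\leq|\ggamma|\leq m$; writing these out componentwise via the definition \eqref{ActionBullet} yields a presentation
$$
\bigoplus_{1\leq|\ggamma|\leq m}(S/SQ)\cdot\tfrac{1}{\ggamma!}\partial^\ggamma
\longrightarrow
\bigoplus_{0\leq|\bbeta|\leq m-1}(S/SQ)\e_\bbeta,
$$
whose $(\ggamma,\bbeta)$-entry is again $\frac{1}{(\ggamma-\bbeta)!}(\partial^{\ggamma-\bbeta}*Q)$.

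Finally, the two matrices so obtained are literal transposes of one another: the rows and columns are indexed by the same two sets (one indexed by $\ggamma$ with $1\leq|\ggamma|\leq m$, the other by $\bbeta$ with $0\leq|\bbeta|\leq m-1$), with the roles of rows and columns interchanged, and with identical entries $\frac{1}{(\ggamma-\bbeta)!}(\partial^{\ggamma-\bbeta}*Q)$. Since the transpose of a finitely presented module is, by definition, the cokernel of the transpose of any presentation matrix, this yields the claimed identification. The only point requiring care is the index bookkeeping when expanding $(dQ)(dx)^\bbeta$ and when reading off \eqref{ActionBullet}; there is no deeper obstacle, as the substantive content is already packaged in Lemma \ref{Im(bar-delta_0)} and the standard right-exact sequence for jet modules.
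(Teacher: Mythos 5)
Your proposal is correct and follows essentially the same route as the paper: Section 11 establishes exactly the two presentations \eqref{Presentation3} and \eqref{Presentation4}, observes that their matrices have the identical entries $\frac{1}{(\ggamma-\bbeta)!}(\partial^{\ggamma-\bbeta}*Q)$ with rows and columns interchanged (this is Theorem \ref{thm:TransposeOfJet}), and the corollary is then immediate since Fitting ideals are generated by minors of a presentation matrix and minors are invariant under transposition. The only cosmetic difference is that you inline the proof of Theorem \ref{thm:TransposeOfJet} rather than citing it.
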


 
\ifx\undefined\bysame 
\newcommand{\bysame}{\leavevmode\hbox to3em{\hrulefill}\,} 
\fi

\end{document}